\theoremstyle{plain}
\newtheorem{theo}[subsubsection]{Th\'eor\`eme}
\newtheorem{prop}[subsubsection]{Proposition}
\newtheorem{lemm}[subsubsection]{Lemme}
\theoremstyle{definition}
\newtheorem{defi}[subsubsection]{D\'efinition}
\newtheorem{eg}[subsubsection]{Exemple}
\newtheorem{rem}[subsubsection]{Remarque}
\newtheorem{ex}[subsubsection]{Exercice}
\newtheorem{conj}[subsubsection]{Conjecture}
\def\resp{\emph{resp.}\xspace}
\def\ie{\emph{i.e.}\xspace}
\def\P{{\mathbf{P}}}
\def\Ber{\text{\upshape{Ber}}}
\def\BP{{\hat{{\mathbf P}}}^1}
\def\C{{\mathbf C}}
\def\R{{\mathbf R}}
\def\Q{{\mathbf Q}}
\def\Z{{\mathbf Z}}
\def\N{{\mathbf{N}}}
\def\AB{{\mathcal{M}} }
\def\Jul{\mathop{\mathscr{J}}}
\def\JulK{\mathop{\mathscr{K}}}
\def\Fat{\mathop{\mathscr{F}}}
\def\F{{\mathbf F}}
\def\Hp{\mathbf H_p}
\def\Hp{\mathbf H_p}
\def\HpR{\mathbf H_p^{\R}}
\def\HpQ{\mathbf H_p^{\Q}}
\def\Hpsing{\mathbf H_p^{\text{\upshape sing}}}
\def\Res{\operatorname{Res}}
\def\Mob{\operatorname{\text{\upshape M\"ob}}}
\def\PSL{\operatorname{PSL}}
\def\Log{\operatorname{log}}
\def\Mandel{\operatorname{M}}
\let\bar\overline
\let \mathscr\mathcal
\def\can{{\text{\upshape can}}}
\def\sing{{\text{\upshape sing}}}
\def\abs#1{\left|{#1}\right|}
\def\norm#1{\left\|{#1}\right\|}
\let\ra\rightarrow
\let\phi\varphi
\def\whE{\mathop{\mathscr E}}
\def\PGL{\operatorname{PGL}}
\def\SL{\operatorname{SL}}
\def\Crit{\operatorname{Crit}}
\def\dist{\operatorname{dist}}
\def\deg{\operatorname{deg}}
\def\diam{\operatorname{diam}}
\def\Bo{B^\circ}
\def\Bf{B^\bullet}
\numberwithin{equation}{subsection}       % Number formulas within sections
\title[Dynamique $p$-adique]{Dynamique~$p$-adique\\
(d'apr\`es les expos\'es de Jean-Christophe Yoccoz)}
\author{Serge Cantat, Antoine Chambert-Loir}
\address{Universit\'e de Rennes~1, Campus de Beaulieu, F-35042 Rennes Cedex}
\email{(serge.cantat,antoine.chambert-loir)@univ-rennes1.fr}
\begin{abstract}
Ce texte est une introduction \`a la dynamique
des fractions rationnelles sur un corps~$p$-adique.
Il reprend le cours que Jean-Christophe Yoccoz avait fait
lors des \emph{\'Etats de la recherche} de mai 2006 et
en d\'eveloppe certains points. La plupart des r\'esultats pr\'esent\'es
sont d\^us \`a Benedetto, B\'ezivin, Hsia, Lubin, Morton, Rivera-Letelier
et Silverman. 
%Serge Ajout de deux noms : Morton et Silverman.
\end{abstract}
\begin{document}

\maketitle

%
%%%%%%%%%%%%%%%%%%%%%%%%%%%%%%%%%%%%%%%%%%%%%%%%%%%%%%%%%%%%%%%%%%
%

 \setcounter{tocdepth}{1}
 \tableofcontents

%%%%%%%%%%%%%%%%%%%%%%%%%%%%%%%%%%%%%%%
%%%%%%%%%%%%%%%%%%%%%%%%%%%%%%%%%%%%%%%

\section{Introduction et motivations}

%%%%%%%%%%%%%%%%%%%%%%%%%%%%%%%%%%%%%%%
%%%%%%%%%%%%%%%%%%%%%%%%%%%%%%%%%%%%%%%

%\medskip

Afin de motiver les sp\'ecialistes de dynamique
holomorphe, formulons quelques probl\`emes ouverts de dynamique
complexe qui peuvent \^etre reli\'es \`a des questions de dynamique
$p$-adique (voir~\cite{McMullen:survey} pour plus de d\'etails).

\subsection{Hyperbolicit\'e}

Soit $f\in \C(z)$ une fraction rationnelle d'une variable complexe. Nous
identifierons~$f$ \`a l'endomorphisme holomorphe de la droite projective
$\P^1(\C)$ qu'elle induit, et  nous noterons~$d$ son degr\'e. Nous supposerons 
toujours que~$d$ est sup\'erieur ou \'egal \`a~$2$. 

Les points critiques de~$f$ sont les points~$z$ de la droite projective 
en lesquels la diff\'erentielle de $f:\P^1(\C)\to \P^1(\C)$ s'annule;
notons $\Crit(f)$ l'ensemble de ces points.
L'ensemble postcritique de~$f$ est la r\'eunion des orbites positives
des points critiques de~$f$ :
\[
P(f)= \bigcup_{\substack{n>0 \\ y\in\Crit(f)}} f^n(y).
\]

Les points p\'eriodiques
de~$f$ peuvent \^etre organis\'es en trois  cat\'egories de la
mani\`ere suivante. Soit~$z$  
un point p\'eriodique de p\'eriode~$p$.
La diff\'erentielle 
de $f^p$ au point~$z$ est une application lin\'eaire de $T_z(\P^1(\C))$
dans lui m\^eme ; c'est donc une homoth\'etie dont le rapport est not\'e
$(f^p)'(z)$. Lorsque~$z$ est diff\'erent de $\infty$, $(f^p)'(z)$ est
la d\'eriv\'ee usuelle de la fraction rationnelle $f^p$, \'evalu\'ee en~$z$.\label{page.derivee}
On dit que
\begin{itemize}
\item  \emph{$z$ est r\'epulsif} si $\abs{(f^p)'(z) } > 1$,
\item  \emph{$z$ est indiff\'erent} si $\abs{(f^p)'(z) } = 1$,
\item  \emph{$z$ est attractif} si $\abs{(f^p)'(z) } < 1$,
et~\emph{super-attractif} si $(f^p)'(z)=0$.
\end{itemize}
Lorsque~$z$ est un point p\'eriodique attractif de p\'eriode~$p$, l'orbite
de~$z$ est un cycle attractif : 
il existe un ouvert de $\P^1(\C)$ contenant l'orbite de
$z$ tel que l'orbite de tout point dans cet ouvert soit attir\'ee par l'orbite de
$z$ (voir~\cite{Milnor:book} \S 8 et~\S 9). Par ailleurs, comme tout cycle attractif 
attire au moins un point critique de~$f$, l'union des cycles
attractifs est contenue dans l'adh\'erence de l'ensemble postcritique
(\cite{Milnor:book}, th\'eor\`eme~8.6).

L'ensemble de Fatou de~$f$ est le plus grand ouvert de $\P^1(\C)$ sur lequel la
famille $(f^n)$ des it\'er\'es de~$f$ (consid\'er\'es comme applications holomorphes
de~$\P^1(\C)$ dans lui-m\^eme) forme, localement, une famille normale. 
Il contient les cycles attractifs. 
Son compl\'ementaire est l'ensemble de Julia~$\Jul(f)$ ; il co\"{\i}ncide avec 
l'adh\'erence des points p\'eriodiques r\'epulsifs de~$f$ (\cite{Milnor:book},
\S 14).

On dit que~$f$ est \emph{hyperbolique} lorsque les trois conditions
\'equivalentes suivantes sont satisfaites (voir~\cite{McMullen:survey},
th\'eor\`eme~2.2; voir aussi~\cite{Flexor}) :
\begin{enumerate}\def\theenumi{\roman{enumi}}\def\labelenumi{(\theenumi)}
\item l'adh\'erence de l'ensemble postcritique de~$f$ est disjointe
de l'ensemble de Julia~$\Jul(f)$ ;
\item tous les points critiques de~$f$ sont attir\'es par des cycles attractifs ;
\item l'application~$f$ est dilatante au voisinage de~$\Jul(f)$, ce qui signifie
qu'il existe une m\'etrique conforme~$\rho$ d\'efinie sur un voisinage de~$\Jul(f)$  telle
que $\abs{f'(z)}_\rho >1$ pour tout point~$z$ de~$\Jul(f)$.
\end{enumerate}
L'application~$f$ est donc hyperbolique lorsque l'orbite des points critiques 
(au voisinage desquels $f$ cesse \'evidemment d'\^etre dilatante)
ne se m\^ele pas \`a l'ensemble de Julia 
(l\`a o\`u sont localis\'es les points r\'epulsifs).  
La condition~(ii) montre que l'hyperbolicit\'e est une condition 
ouverte ; ainsi, l'ensemble
des fractions rationnelles de degr\'e~$d$ qui sont hyperboliques forme
un ouvert dans l'espace des fractions rationnelles de degr\'e $d$.
La {\emph{conjecture
d'hyperbolicit\'e}} stipule que cet ouvert est dense : 

\begin{conj}[voir~\cite{McMullen:survey}]\label{conj:hyperbolicite}
Soit~$d$ un entier sup\'erieur ou \'egal \`a~$2$. L'ensemble des applications
rationnelles qui sont hyperboliques forme un ouvert \emph{dense} dans
l'ensemble des applications rationnelles de degr\'e~$d$.
\end{conj}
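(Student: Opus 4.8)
Le plan est de suivre la stratégie devenue classique depuis les travaux de Mañé--Sad--Sullivan et de Lyubich, qui ramène cette conjecture à un énoncé de rigidité analytique. La première étape consiste à établir la densité des fractions \emph{$\Jul$-stables} de degré~$d$, c'est-à-dire celles au voisinage desquelles l'ensemble de Julia se déforme holomorphiquement sans jamais rencontrer l'orbite des points critiques : c'est le théorème de Mañé--Sad--Sullivan, complété par Lyubich, qui affirme que l'ensemble de ces fractions est un ouvert \emph{dense} de $\Rat_d$. Comme l'hyperbolicité est une condition ouverte (critère~(ii) de l'équivalence rappelée ci-dessus), il suffit alors de démontrer que \emph{toute fraction $\Jul$-stable de degré~$d$ est hyperbolique} ; la densité de l'hyperbolicité en résultera aussitôt.

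Deuxième étape : la réduction à une propriété de rigidité. Soit $f$ une fraction $\Jul$-stable. La $\Jul$-stabilité confère à la dynamique de~$f$ le long de son ensemble de Julia une forme de stabilité structurelle, réalisée par un mouvement holomorphe qui conjugue quasiconformément $f$ à ses voisines. Si $f$ n'est pas hyperbolique, un point critique au moins a son orbite piégée près de $\Jul(f)$ ; le $\lambda$-lemme et le théorème de déformation d'Ahlfors--Bers permettent alors de fabriquer, à partir de cette absence de rigidité, un champ de droites mesurable $f$-invariant porté par $\Jul(f)$ --- sauf si $f$ est un exemple de Lattès flexible, cas que l'on traite à part. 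Il reste donc à établir l'énoncé de rigidité suivant : \emph{une fraction rationnelle complexe qui n'est pas un exemple de Lattès flexible ne porte aucun champ de droites mesurable invariant sur son ensemble de Julia.} Cet énoncé force toute fraction $\Jul$-stable à être rigide, donc hyperbolique, et achève la preuve.

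Troisième étape, le cas modèle de la famille quadratique $z \mapsto z^2 + c$. Douady et Hubbard ont montré que la conjecture y équivaut à la connexité locale de l'ensemble de Mandelbrot aux paramètres concernés. Aux paramètres au plus finiment renormalisables, Yoccoz établit cette connexité locale en construisant des \emph{puzzles}, partitions emboîtées d'un voisinage de $\Jul(f)$ en pièces dont on contrôle combinatoirement la taille. Aux paramètres infiniment renormalisables, on fait appel à la théorie de la renormalisation : l'hyperbolicité de l'opérateur de renormalisation (Sullivan, McMullen, puis Lyubich et Avila--Lyubich) fournit la contraction géométrique des échelles successives, d'où la rigidité recherchée. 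L'assemblage de ces deux arguments donne la conjecture dans la famille quadratique ; des variantes de la méthode des puzzles et des \emph{box mappings} permettent ensuite, via les travaux de Kozlovski--Shen--van Strien, de l'étendre à tous les polynômes à coefficients réels.

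L'obstacle principal est manifestement la troisième étape dans toute sa généralité : démontrer l'absence de champ de droites invariant --- autrement dit la rigidité --- pour une fraction rationnelle \emph{complexe} de degré arbitraire. Les deux outils dont on dispose, les puzzles de Yoccoz et la renormalisation, reposent sur des structures particulières (arborescence combinatoire des pièces, auto-similarité à une seule échelle, caractère réel éventuel du paramètre) dont on ne connaît pas d'analogue pour un degré quelconque ni pour une application rationnelle générale ; c'est là que la démonstration se heurte aujourd'hui à une difficulté qui paraît irréductible. On peut espérer que la dynamique $p$-adique, objet du présent texte, offre un terrain d'expérimentation : la réduction modulo~$p$ et la combinatoire des arbres (espace de Berkovich) produisent des modèles dégénérés de la dynamique dont l'étude pourrait suggérer les invariants combinatoires qui font défaut dans le cas archimédien.
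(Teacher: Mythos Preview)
L'\'enonc\'e en question est une \emph{conjecture}, pas un th\'eor\`eme : le texte ne contient aucune d\'emonstration \`a laquelle comparer votre proposition, et la conjecture d'hyperbolicit\'e reste ouverte \`a ce jour. Votre texte n'est d'ailleurs pas une preuve mais un plan de bataille --- la r\'eduction classique via Ma\~n\'e--Sad--Sullivan \`a l'absence de champ de droites invariant --- dont vous reconnaissez vous-m\^eme que l'\'etape centrale {\og para\^{\i}t irr\'eductible\fg}. Il n'y a donc pas lieu de l'\'evaluer comme une d\'emonstration.

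Signalons toutefois une erreur factuelle dans votre troisi\`eme \'etape. Vous affirmez que {\og l'assemblage de ces deux arguments donne la conjecture dans la famille quadratique\fg} ; c'est faux pour les param\`etres \emph{complexes}. Les puzzles de Yoccoz r\`eglent le cas des param\`etres au plus finiment renormalisables, mais la th\'eorie de la renormalisation (Sullivan, McMullen, Lyubich, Avila--Lyubich) ne couvre pas tous les param\`etres infiniment renormalisables complexes : elle traite certaines classes de combinatoire (primitive born\'ee, etc.), pas la g\'en\'eralit\'e. Le texte lui-m\^eme rappelle que {\og m\^eme dans ce cas, la conjecture est encore ouverte \`a l'heure actuelle\fg}. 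Ce qui est effectivement \'etabli est la densit\'e de l'hyperbolicit\'e dans la famille quadratique \emph{r\'eelle} (Lyubich, Graczyk--\'Swi\k{a}tek), et plus g\'en\'eralement pour les polyn\^omes r\'eels (Kozlovski--Shen--van Strien), comme vous le mentionnez ensuite ; mais le passage du r\'eel au complexe n'est pas une formalit\'e et constitue pr\'ecis\'ement l'obstacle.
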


Dans le cas des {\emph{polyn\^omes quadratiques}}
\begin{equation}\label{eq:quadratique}
f_c(z) = z^2+c
\end{equation}
une variante de la conjecture affirme que l'ensemble des nombres complexes~$c$ tels que
le point critique~$0$ est attir\'e par un cycle attractif de~$f_c$
forme un ouvert dense
de la droite complexe~$\C$. 
Puisque tout cycle attractif attire l'unique point critique~$0$,
il revient au m\^eme de dire que, pour un ouvert
dense de param\`etres~$c$, ou bien $f_c$ poss\`ede un cycle attractif, ou bien 
$f_c^n(0)$ tend vers l'infini. M\^eme dans ce cas, la conjecture est 
encore ouverte \`a l'heure actuelle.

\subsection{Renormalisation}

Soit $f_c$ un polyn\^ome quadratique pour lequel
l'ensemble postcritique est born\'e ; 
autrement dit, $c$ appartient par d\'efinition 
\`a l'ensemble de Mandelbrot  
\[
\Mandel =\{c \in \C \, ; \, \text{la suite $(f_c^n(0))$ est born\'ee}\} 
\]
(voir~\cite{Milnor:book}, appendix G). 
On dit que l'it\'er\'e $f_c^n$ de $f_c$ est \emph{renormalisable}
(ou que $f_c$ est renormalisable au temps~$n$) 
s'il existe deux ouverts~$U$ et~$V$ simplement connexes
contenant l'origine~$0$,  
tels que 
\begin{enumerate}\def\theenumi{\alph{enumi}}\def\labelenumi{(\theenumi)}
\item  $f_c^n(U)=V$ et $\overline{U} \subset V$ ;
\item l'application $f_c^n:U\to V$ est propre et de degr\'e~$2$ ;
\item pour tout entier $k\geq 0$, $f_c^{nk}(0)$ appartient \`a~$U$.
\end{enumerate}
Autrement dit,   l'it\'er\'e $f_c^n$ de $f_c$ (qui est un polyn\^ome de degr\'e~$2^n$)
se comporte au voisinage du point critique~$0$ comme un polyn\^ome
de degr\'e~$2$ dont l'orbite postcritique est born\'ee. On d\'emontre alors
que $f_c^n:U \to V$ est topologiquement conjugu\'e, sur~$U$, \`a un polyn\^ome
quadratique $f_{c'}$ (voir~\cite{Douady-Hubbard:1985}). La condition~(c) assure
que l'orbite du point critique de $f_{c'}$ est \`a nouveau born\'ee,
donc que $c'\in\Mandel$.

On dit que le polyn\^ome quadratique $f_c$ est \emph{infiniment renormalisable} s'il l'est au temps~$n$ pour un ensemble infini d'entiers~$n$. Pour montrer la conjecture
d'hyperbolicit\'e, il suffit de montrer que l'ensemble des param\`etres $c$ pour lesquels
$f_c$ est infiniment renormalisable est d'int\'erieur vide. Ceci est expliqu\'e, par exemple, dans l'article de synth\`ese de McMullen 
\cite{McMullen:survey} (voir aussi \cite{Flexor}).
Les param\`etres pour lesquels
$f_c$ est renormalisable forment des petites copies de l'ensemble de Mandelbrot
\`a l'int\'erieur de lui-m\^eme.  
La conjecture d'hyperbolicit\'e pour la famille
$(f_c)$ s'av\`ere donc \'equivalente
au fait que toute intersection d\'ecroissante 
de telles copies de $\Mandel$ soit d'int\'erieur
vide. Le th\'eor\`eme suivant concerne
l'intersection de cet ensemble avec l'axe r\'eel.

\begin{theo}[Lyubich]
L'ensemble des valeurs \emph{r\'eelles} du param\`etre~$c$
tel que l'application $z\mapsto z^2+c$ soit \emph{infiniment
renormalisable} est de mesure de Lebesgue nulle.
\end{theo}

Si la conjecture d'hyperbolicit\'e n'est pas satisfaite, 
on peut trouver un nombre alg\'ebrique~$c$ 
pour lequel $f_c$ est infiniment renormalisable. 
La conjecture suivante est donc plus forte que la conjecture d'hyperbolicit\'e
pour la famille $f_c(z)=z^2+ c$, $c\in \C$. 

\begin{conj}\label{conj:renormalgebrique}
Si~$c$ est un nombre alg\'ebrique, l'application \mbox{$z\mapsto z^2+c$}
n'est pas infiniment renormalisable.
\end{conj}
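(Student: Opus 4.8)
\emph{Esquisse d'approche.} La conjecture est ouverte ; comme on vient de le rappeler, elle est plus forte que la conjecture~\ref{conj:hyperbolicite} pour la famille quadratique, de sorte qu'aucune d\'emonstration n'en est connue. Voici n\'eanmoins la strat\'egie la plus naturelle, celle qui fait le lien avec la dynamique $p$-adique. Supposons par l'absurde que $c$ soit alg\'ebrique et que $f_c=z^2+c$ soit infiniment renormalisable, et posons $K=\Q(c)$. Le fait que $f_c$ soit renormalisable (ne serait-ce qu'une fois) force l'orbite de~$0$ \`a \^etre born\'ee, donc $c\in\Mandel$ ; de plus l'ensemble postcritique de~$f_c$ est infini, autrement dit $0$ n'est pas pr\'ep\'eriodique, de sorte que la hauteur canonique $\hat h_{f_c}(0)$ est strictement positive. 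Or celle-ci se d\'ecompose en une somme, index\'ee par les places~$w$ de~$K$, de hauteurs locales $\lambda_w\ge 0$ (les fonctions de Green locales attach\'ees \`a~$f_c$) ; \`a la place archim\'edienne de r\'ef\'erence, c'est-\`a-dire celle pour laquelle $c$ est le nombre complexe donn\'e, le terme correspondant est la valeur en~$0$ de la fonction de Green de l'ensemble de Julia rempli de~$f_c$, valeur qui est nulle puisque $c\in\Mandel$. Par cons\'equent, de deux choses l'une : \emph{(a)}~il existe une place finie~$v$ de~$K$ telle que $\abs{c}_v>1$, et alors l'orbite $v$-adique de~$0$ tend vers l'infini ; ou bien \emph{(b)}~il existe un autre plongement $\sigma\colon K\hookrightarrow\C$ tel que $\sigma(c)\notin\Mandel$. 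Le cas~\emph{(b)} ne rel\`eve que de la dynamique complexe ; c'est le cas~\emph{(a)} que l'on esp\`ere attaquer par voie $p$-adique.

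Dans le cas~\emph{(a)}, supposons $v\nmid 2$ (le cas de r\'esidu~$2$ \'etant plus d\'elicat). Sur~$\C_v$, le polyn\^ome $z^2+c$ a mauvaise r\'eduction et son comportement y est \emph{hyperbolique} : lorsqu'il n'est pas vide, son ensemble de Julia est un ensemble de Cantor sur lequel $f_c$ est conjugu\'e au d\'ecalage unilat\'eral sur deux symboles, et le point critique~$0$ appartient au bassin de l'infini (c'est l'un des premiers \'enonc\'es de la th\'eorie $p$-adique des polyn\^omes \`a mauvaise r\'eduction, dans l'esprit des travaux de Benedetto et de Rivera-Letelier). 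En particulier, tout point~$z$ de cet ensemble de Julia (en particulier tout point p\'eriodique de~$f_c$, dont l'orbite est finie) v\'erifie $\abs{z}_v=\abs{c}_v^{1/2}$, et le multiplicateur d'un cycle de p\'eriode~$n$ a pour valeur absolue $v$-adique $\abs{c}_v^{n/2}$. Si l'on suppose de plus que la renormalisation est \emph{de type born\'e} (comme dans l'exemple de Feigenbaum), les renormalis\'ees successives de~$f_c$ restent dans un compact et l'on dispose de bornes a priori ; les multiplicateurs $\lambda_k$ des cycles r\'epulsifs de p\'eriode~$n_k$ fournis par la tour de renormalisation demeurent alors born\'es \`a la place archim\'edienne de r\'ef\'erence, tandis que $\abs{\lambda_k}_v=\abs{c}_v^{n_k/2}\to\infty$. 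On souhaiterait en tirer une contradiction arithm\'etique, typiquement \`a l'aide de la formule du produit ou de la th\'eorie des hauteurs.

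L'obstacle principal est pr\'ecis\'ement l\`a, et c'est lui qui laisse la conjecture hors d'atteinte. La formule du produit ne s'applique pas directement, car le degr\'e alg\'ebrique de~$\lambda_k$ n'est pas born\'e (il cro\^{\i}t comme~$2^{n_k}$) : majorer $\abs{\lambda_k}$ \`a un seul plongement archim\'edien ne dit rien de la hauteur de Weil $h(\lambda_k)$, et il faudrait contr\^oler \emph{tous} les conjugu\'es galoisiens de~$\lambda_k$, c'est-\`a-dire la dynamique de~$f_{\sigma(c)}$ pour tous les conjugu\'es~$\sigma(c)$ de~$c$, dont certains sortent de~$\Mandel$ et fournissent des multiplicateurs qui croissent exponentiellement en~$n_k$. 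L'outil pertinent serait l'\'equidistribution des orbites galoisiennes relativement au courant de bifurcation (dans l'esprit des th\'eor\`emes d'\'equidistribution de Baker--DeMarco et de Favre--Rivera-Letelier), mais on ne sait pas s'en servir pour conclure ; le m\^eme ph\'enom\`ene fait d'ailleurs \'echouer les variantes fond\'ees sur la seule petitesse archim\'edienne des nombres $f_c^{n_k}(0)$. Autrement dit, le \emph{fait $p$-adique} sous-jacent, \`a savoir que sur~$\C_p$ l'application $z\mapsto z^2+c$ n'est jamais infiniment renormalisable (sa dynamique y \'etant soit triviale en cas de bonne r\'eduction, soit celle d'un d\'ecalage sur un Cantor en cas de mauvaise r\'eduction), est certainement pertinent ; mais il manque un principe local-global qui relierait la combinatoire archim\'edienne de la renormalisation \`a la dynamique non archim\'edienne. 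Tant qu'un tel principe fait d\'efaut, seule la dynamique complexe peut \^etre mise \`a contribution, et la conjecture reste, comme la conjecture d'hyperbolicit\'e qu'elle implique, hors de port\'ee.
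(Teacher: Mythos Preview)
Il n'y a rien \`a comparer : l'\'enonc\'e est une \emph{conjecture}, et l'article ne propose aucune d\'emonstration --- il se contente de la formuler comme motivation pour l'\'etude de la dynamique $p$-adique, apr\`es avoir observ\'e qu'elle implique la conjecture d'hyperbolicit\'e pour la famille quadratique. Tu le reconnais d'embl\'ee, et ton texte n'est pas une preuve mais une discussion heuristique d'une strat\'egie possible ; sur ce plan il n'y a pas de d\'efaut logique \`a relever.

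Ta discussion va d'ailleurs nettement au-del\`a de ce que fait l'article, qui ne dit rien d'explicite sur une \'eventuelle attaque arithm\'etique. Les ingr\'edients que tu mobilises sont corrects : la d\'ecomposition de la hauteur canonique $\hat h_{f_c}(0)$ en contributions locales, le fait qu'une renormalisation infinie interdit \`a~$0$ d'\^etre pr\'ep\'eriodique (donc $\hat h_{f_c}(0)>0$), la nullit\'e du terme archim\'edien correspondant au plongement o\`u $c\in\Mandel$, et la description $v$-adique de $z\mapsto z^2+c$ lorsque $\abs{c}_v>1$ (ensemble de Julia de type Cantor, $\abs{z}_v=\abs{c}_v^{1/2}$ sur le Julia, multiplicateur d'un $n$-cycle de valeur absolue $\abs{c}_v^{n/2}$ pour $v\nmid 2$). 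Tu identifies aussi honn\^etement l'obstacle central : le degr\'e de $\lambda_k$ sur~$\Q(c)$ cro\^{\i}t comme~$2^{n_k}$, ce qui emp\^eche toute application na\"{\i}ve de la formule du produit et exige un contr\^ole de \emph{tous} les conjugu\'es galoisiens. C'est effectivement l\`a que l'approche se bloque, et c'est pour cela que la conjecture reste ouverte.
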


\subsection{Dynamique~$p$-adique}

En vue de comprendre la dynamique
de l'application $z\mapsto z^2+c$ lorsque
$c$ est un nombre alg\'ebrique,
il est naturel de chercher \`a d\'ecrire la restriction
de cette dynamique \`a l'ensemble des nombres complexes~$z$ qui 
sont des nombres alg\'ebriques. Nous avons alors
\`a notre disposition diverses topologies 
sur l'ensemble~$\bar\Q$ des nombres alg\'ebriques.
Le premier niveau de cette \'etude, celui qui est abord\'e ici,
en concerne les aspects $p$-adiques, ou $p$ est un nombre premier,
et plus pr\'ecis\'ement
ceux li\'es \`a la dynamique de cette application sur l'ensemble
des {\og nombres complexes $p$-adiques\fg}. 
Le deuxi\`eme niveau prendrait en compte l'action du  groupe de Galois de~$\bar\Q_p$ sur~$\Q_p$\footnote{Pour tout nombre premier $p$, le groupe
des automorphismes de~$\C_p$ qui sont continus pour la topologie $p$-adique
co\"{\i}ncide avec le groupe de Galois de ~$\bar\Q_p$ sur~$\Q_p$.}
et le troisi\`eme niveau
se devrait de mettre ensemble les diff\'erentes topologies de~$\Q$,
c'est-\`a-dire les informations $p$-adiques, lorsque $p$ parcourt
l'ensemble des nombres premiers, 
et les informations complexes.

Cet article pr\'esente ainsi
quelques r\'esultats fondamentaux de dynamique~$p$-adique;
il ne contient quasiment aucune d\'emonstration compl\`ete.
Le paragraphe~\ref{par:berko} introduit la droite projective au sens de
Berkovich : il s'agit d'un espace compact connexe m\'etrisable
contenant la droite projective {\og na\"{\i}ve\fg} $\P^1(\C_p)$ comme partie dense
et totalement discontinue.
Le suivant d\'ecrit l'action des fractions rationnelles 
sur cet espace. Le paragraphe~\ref{par:fatoujulia} amorce l'\'etude
de la dynamique des transformations rationnelles. Celle-ci est
pr\'ecis\'ee au paragraphe~\ref{par:poly} dans le cas des polyn\^omes.

Signalons aussi que ces notes ne pr\'esentent qu'un \'etat partiel
de la question au moment de la conf\'erence. Nous esp\'erons qu'elles
fourniront aide et motivation pour se plonger dans les articles originaux.

Quelques compl\'ements
concernant les ph\'enom\`enes d'\'equidistribution sont \'evoqu\'es dans l'article
de Chambert-Loir de ce volume. Le lecteur int\'eress\'e par les aspects ergodiques
pourra \'egalement consulter l'article~\cite{Favre-Rivera-Letelier:2008}
de Favre et Rivera-Letelier ; pour des r\'esultats effectifs en degr\'e $2$ 
sur une extension finie de $\Q_p$, on consultera
\cite{Benedetto-Briend-Perdrix:2007}.
Signalons aussi l'ouvrage~\cite{baker-rumely2010} consacr\'e
\`a la th\'eorie du potentiel sur la droite projective au sens de Berkovich;
il contient notamment un long chapitre sur la dynamique des applications
rationnelles.

\medskip

\noindent{\bf{Remerciements. ---}} 
Nous remercions Jean-Yves Briend, Antoine Ducros et Charles Favre 
pour leurs commentaires et pour avoir mis \`a notre disposition 
leurs notes des expos\'es de Yoccoz.
Nous remercions aussi le rapporteur dont les
nombreuses remarques ont permis d'am\'eliorer ce texte.

Dans la derni\`ere phase de r\'evision de cet article,
le second auteur \'etait invit\'e par l'Institute for Advanced Study
de Princeton et b\'en\'eficiait du soutien de la NSF,
via l'accord n\textsuperscript o~DMS-06350607.

%Serge l\'eg\`ere variation sur cette derni\`ere phrase : suppression d'un "et"

%%%%%%%%%%%%%%%%%%%%%%%%%%%%%%%%%%%%%%%
%%%%%%%%%%%%%%%%%%%%%%%%%%%%%%%%%%%%%%%

\section{Espace de Berkovich}\label{par:berko}

%%%%%%%%%%%%%%%%%%%%%%%%%%%%%%%%%%%%%%%
%%%%%%%%%%%%%%%%%%%%%%%%%%%%%%%%%%%%%%%

Dans cette partie nous pr\'esentons une construction {\og \`a la Dedekind \fg}
de l'espace de Berkovich, aussi appel\'e espace hyperbolique~$p$-adique. 
Comme nous le verrons, l'in\'egalit\'e 
triangulaire ultram\'etrique conf\`ere \`a l'espace des boules de 
$\P^1(\C_p)$ une structure d'arbre r\'eel ; c'est ce qui permet 
de construire l'espace hyperbolique~$p$-adique.
L'approche de Berkovich sera ensuite rapidement r\'esum\'ee. 

\subsection{Notations}\label{par.notations}

Soit $p$ un nombre premier, fix\'e dans tout ce texte.
Nous noterons $\F_p$ le corps \`a~$p$ \'el\'ements.

La valeur absolue $p$-adique~$\abs\cdot_p$ sur
le corps~$\Q$  des nombres rationnels est d\'efinie par  
\[
\abs{p^r\frac ab}_p=p^{-r}
\]
si $a$, $b$ et~$r$ sont des entiers, $a$ et~$b$ n'\'etant pas multiples de~$p$,
et $\abs 0_p=0$. 
Cette valeur absolue est \emph{ultram\'etrique}
au sens o\`u $\abs{x+y}_p\leq\max(\abs x_p,\abs y_p)$ pour 
tous nombres rationnels $x$ et~$y$.
Le corps~$\Q$ n'est pas complet pour cette valeur absolue\footnote{Cela contredirait le th\'eor\`eme de Baire. L'efficacit\'e $p$-adique de la m\'ethode de Newton permet aussi d'expliciter des suites de Cauchy qui n'ont pas de limite ; par exemple la suite $(u_n)$ d\'efinie par $u_0=1$
et $u_{n+1}=(u_n-7/u_n)/2$ n'a pas de limite dans~$\Q$
(muni de la topologie induite par une valeur absolue arbitraire,
par exemple $2$-adique)
mais converge dans~$\Q_2$ vers une racine carr\'ee de~$-7$.}; 
on note $\Q_p$ son compl\'et\'e;
la valeur absolue $p$-adique s'\'etend de mani\`ere unique 
en une valeur absolue ultram\'etrique sur~$\Q_p$.
Pour all\'eger la notation,
on notera $\abs\cdot$ la valeur
absolue $p$-adique 
dans la suite de ce texte.

Par l'in\'egalit\'e ultram\'etrique,
l'ensemble des nombres $p$-adiques de valeur absolue inf\'erieure
ou \'egale \`a~$1$ est un sous-anneau de~$\Q_p$ 
qu'on note~$\Z_p$ et qu'on appelle l'anneau des entiers $p$-adiques.

Soit $\overline{\Q_p}$ une cl\^oture alg\'ebrique du corps~$\Q_p$.
La valeur absolue $p$-adique s'\'etend de mani\`ere unique \`a~$\overline{\Q_p}$
mais ce corps n'est pas complet\footnote{Il est de dimension infinie d\'enombrable sur $\Q_p$.}.
Son compl\'et\'e, le corps des {\og nombres complexes $p$-adiques\fg}
est alors not\'e~$\C_p$; c'est un corps complet et alg\'ebriquement
clos.

L'ensemble $\abs{\Q_p^*}$
des valeurs prises par la valeur absolue sur~$\Q_p^*$
co\"{\i}ncide avec l'ensemble~$p^\Z$ des puissances enti\`eres de~$p$ ; 
celui des valeurs prises par la valeur
absolue sur~$\overline{\Q_p}^*$ est \'egal \`a~$p^\Q$,
et ceci reste vrai pour $\C_p^*$.

La valeur absolue~$p$-adique \'etant ultram\'etrique, 
l'ensemble~$\mathscr O$ des \'el\'ements~$z\in\C_p$
tels que $\abs z\leq 1$ est un anneau, l'\emph{anneau des entiers} de~$\C_p$.
On a $\mathscr O\cap\Q_p=\Z_p$.
Le sous-ensemble~$\mathfrak m$ de~$\mathscr O$ form\'e des~$z$ tels
que $\abs z<1$ en est l'unique id\'eal maximal. Son corps r\'esiduel
$\mathscr O/\mathfrak m$ est une cl\^oture alg\'ebrique de~${\F_p}$ ; on la
note $\bar{\F_p}$ et l'on note $z\mapsto \bar z$ l'application 
de r\'eduction de~$\mathscr O$ dans~$\bar{\F_p}$.

%Serge Changement de \mathfrak en \mathscr pour O
%Serge Changement pour la structure de la derni\`ere phrase.

Nous notons $\P^1(\C_p)$ la droite projective sur~$\C_p$,
r\'eunion de~$\C_p$ et d'un point~$\infty$. Les coordonn\'ees
homog\`enes sur $\P^1(\C_p)$ seront not\'ees $[x:y]$ ; la
coordonn\'ee affine sur $\C_p$ sera not\'ee $z$, de sorte que
$z=x/y$. Enfin, $\Mob$ d\'esigne le groupe $\PSL_2(\C_p)$ des transformations
de M\"obius \`a coefficients dans~$\C_p$; il op\`ere naturellement
par homographies sur~$\P^1(\C_p)$.  
De m\^eme, $\P^1(\overline{\F_p})=\bar{\F_p}\cup\{\infty\}$
est la droite projective sur~$\overline{\F_p}$; son groupe
d'homographies est le groupe~$\PSL_2(\overline{\F_p})$.

L'application de r\'eduction modulo $\mathfrak m$ d\'efinie sur~$\mathscr O$
s'\'etend en une application de~$\P^1(\C_p)$ \`a valeurs
dans $\P^1(\bar{\F_p})$, 
\[
\pi: \P^1(\C_p)\to \P^1(\bar{\F_p}),
\] 
pour laquelle
$\pi([x:y])=\infty$ quel que soit le point~$z$ satisfaisant $\abs{z} > 1$,
y compris $z=\infty$.
On peut d\'ecrire~$\pi$ de la fa\c{c}on suivante.
Soit $z$ un point de~$\P^1(\C_p)$ de coordonn\'ees homog\`enes~$[x:y]$;
quitte \`a diviser $x$ et $y$ par celui dont la valeur absolue est \'egale \`a $\max(\abs x,\abs y)$, nous pouvons supposer
que $\max(\abs x,\abs y)=1$; ceci  signifie que $x$ et $y$ appartiennent \`a~$\mathscr O$ mais qu'ils n'appartiennent pas tous deux \`a~$\mathfrak m$.
Alors, $\pi(z)$ est le point de~$\P^1(\overline{\F_p})$
de coordonn\'ees homog\`enes~$[\bar x:\bar y]$.
Ce point ne d\'epend que de $z$ et pas du choix de~$x$ et~$y$; si $z\in\mathscr O$,
c'est-\`a-dire si l'on peut prendre $y=1$, alors $[\bar x:\bar y]=[\bar z:\bar 1]$;
sinon, on peut prendre $x=1$ et $y=1/z$, d'o\`u $[\bar x:\bar y]=[\bar 1:\bar 0]=\infty$.

\def\bFp{\bar{\F_p}}
Pour tout \'el\'ement $\alpha$ de $\P^1(\bar{\F_p})$, nous noterons
$B(\alpha)$ la partie de~$\P^1(\C_p)$ d\'efinie comme 
la fibre $\pi^{-1}(\alpha)$ de~$\pi$. 
Ainsi, lorsque $\alpha\neq \infty$,
 $B(\alpha)$ est l'ensemble des $z\in \mathscr O$ 
tel que $z\equiv \alpha\pmod{\mathfrak m}$. 
% soit encore
% $\abz{z-a}<1$ si $a$ est un \'el\'ement quelconque de~$\mathfrak O$
% tel que $a\equiv\alpha \pmod{\mathfrak m}$.
De m\^eme,  $B(\infty)$ est l'ensemble $\{\abs z>1\}\cup\{\infty\}$.
Ces parties sont appel\'ees \emph{boules} (voir
le paragraphe~\ref{par.boules}).

\subsection{R\'eduction}\label{par:reduction}

Soit $R$ une fraction rationnelle de degr\'e~$d$ \`a coefficients dans~$\C_p$
qu'on \'ecrit sous la forme $R=P/Q$ pour deux
polyn\^omes~$P$ et~$Q$ \`a coefficients dans~$\C_p$ premiers entre eux avec
$d=\max(\deg P,\deg Q)$. 
L'endomorphisme de $\P^1(\C_p)$ d\'etermin\'e par~$R$ est encore not\'e~$R$. 
En coordonn\'ees 
homog\`enes $[x:y]$, avec $z=x/y$ et $\infty= [1:0]$, nous avons donc 
\[
R([x:y])=[A(x,y):B(x,y)]
%% acl - ajout de parentheses
\]
o\`u $A$ et $B$ sont les polyn\^omes homog\`enes de degr\'e~$d$ d\'efinis
par $A(X,Y)=Y^d P(X/Y)$ et $B(X,Y)=Y^d Q(X/Y)$. 
Quitte \`a diviser~$A$ et~$B$ par le coefficient de plus grande valeur absolue, 
nous pouvons supposer que tous les
coefficients $a_i$ et $b_j$ de~$A$ et de~$B$ appartiennent \`a~$\mathscr O$ 
et qu'au moins  un des coefficients n'est pas dans $\mathfrak m$ ; 
autrement dit, 
\[
A(X,Y) =\sum a_i X^i Y^{d-i}, \quad B(X,Y)=\sum b_j X^j Y^{d-j}
\]
avec 
\begin{equation}\label{eq:normalisation}
\max_{i,j}(\abs{a_i}, \abs{b_j})=1.
\end{equation}
Notons $\bar A$ et $\bar B$ les polyn\^omes \`a coefficients dans~$\bFp$
obtenus de~$A$ et~$B$ par r\'eduction modulo~$\mathfrak m$ 
de leurs coefficients.
Si $\bar B\neq 0$, on d\'efinit la r\'eduction de~$R$ 
modulo $\mathfrak m$ comme la fraction
rationnelle $\bar R=\bar A/\bar B$; elle ne d\'epend pas
des choix faits pour~$A$ et~$B$.
Lorsque $\bar B=0$, on pose $\bar R=\infty$.

\begin{eg}
Supposons que $R(z)=z^2 + c$.
Si $\abs c\leq 1$,  on peut prendre $A(x,y)=x^2+cy^2$ et $B(x,y)=y^2$,
de sorte que $\bar{R}(z)= z^2+\bar c$ ; en particulier,
$\bar R(z)=z^2$ si $\abs c<1$.
En revanche, si $\abs c>1$, on prend $A(x,y)=c^{-1}x^2+y^2$ et $B(x,y)=c^{-1}y^2$, et l'on a $\bar{R}(z)=\infty$.

Lorsque $R$ est la transformation de M\"obius d\'efinie 
 par $R(z)=pz +1/p$,  alors $\bar{R}(z)=\infty$ pour tout $z\in \P^1(\bar{\F}_p)$. 
 En particulier, $\bar R$ n'est pas inversible. 
Cet exemple montre en particulier que
$\bar{R}\circ \pi$ ne co\"{\i}ncide pas toujours avec $\pi \circ R$.
\end{eg}

On dit que~$R$ a une r\'eduction {\emph{non triviale}} si $\bar R$ n'est pas 
constante, ce qui revient \`a dire que $\bar A$ et $\bar B$ 
ne sont pas multiples l'un de l'autre. 

Le degr\'e de $\bar R$ est inf\'erieur ou \'egal \`a celui de~$R$; en
fait, la diff\'erence $\deg R-\deg \bar R$ est \'egale au degr\'e
du pgcd des polyn\^omes~$\bar A$ et~$\bar B$.
On dit que~$R$ a \emph{bonne r\'eduction} lorsque le degr\'e de~$\bar R$
est \'egal au degr\'e de~$R$, autrement dit lorsque $\bar A$
et $\bar B$ sont premiers entre eux. Si c'est le cas,  on a 
\[
\bar{R}\circ \pi = \pi\circ R.
\]
Soit en effet $z=[x:y]$ un point de~$\P^1(\C_p)$, o\`u $x$ et $y$
sont des \'el\'ements de~$\mathscr O$ tels que $\max(\abs x,\abs y)=1$.
Comme $\bar A$ et $\bar B$ sont premiers entre eux, 
$\bar A(\bar x,\bar y)$ et $\bar B(\bar x,\bar y)$ ne sont
pas tous deux nuls. Puisque la r\'eduction modulo~$\mathfrak m$
de $A(x,y)$ est $\bar A(\bar x,\bar y)$, et de m\^eme pour $B(x,y)$,
la r\'eduction modulo~$\mathfrak m$ du point $R(z)=[A(x,y):B(x,y)]$ 
est le point $[\bar A(\bar x,\bar y):\bar B(\bar x,\bar y)]$.
De plus, $\pi(z)=[\bar x:\bar y]$
et  $\bar R(\pi(z))$ est donc le point
de coordonn\'ees homog\`enes $[\bar A(\bar x,\bar y):\bar B(\bar x,\bar y)]$.

Dans le cas de bonne r\'eduction,
l'image r\'eciproque par~$R$ de toute boule $B(\alpha)$ 
est donc la r\'eunion des boules $B(\beta)$, pour 
$\beta \in \bar{R}^{-1}(\alpha )$.

\medskip 

Revenons au cas g\'en\'eral en conservant les hypoth\`eses faites sur~$A$ et~$B$: leurs coefficients appartiennent \`a~$\mathscr O$ et ne sont pas tous dans
l'id\'eal maximal~$\mathfrak m$. 
%% acl - je reprend les hypotheses
Soit $\Res(A,B)$
le r\'esultant{\footnote{Il s'agit du r\'esultant des polyn\^omes homog\`enes~$A$ et~$B$
de degr\'es~$d$; c'est aussi celui des polyn\^omes~$A(x,1)$ et~$B(x,1)$ consid\'er\'es comme polyn\^omes de degr\'es~$\leq d$. Il se calcule par exemple comme le d\'eterminant d'une matrice carr\'ee
(\emph{matrice de Sylvester}) de taille~$2d$.}} des polyn\^omes~$A$ et~$B$ ; notons
\[
\Delta(R) = \abs{\Res(A,B)}
\]
sa valeur  absolue; c'est une variante du \emph{discriminant} de~$R$
qu'ont introduit Morton et Silverman dans~\cite{Morton-Silverman:Crelle}.
%% acl - ajout de deux lignes
Comme le r\'esultant $\Res(A,B)$ 
est un polyn\^ome \`a coefficients entiers en les coefficients des polyn\^omes~$A$
et~$B$,
$\Res(A,B)$ appartient \`a~$\mathscr  O$,
donc $\Delta(R)\leq 1$ (cf. \'equation~\ref{eq:normalisation}).
%% acl - enleve la valeur absolue
En outre, le r\'esultant de $\bar A$ et $\bar B$ co\"{\i}ncide avec la r\'eduction 
de $\Res(A,B)$. Par suite, $\overline{\Res(A,B)}\neq 0$ si et seulement
si les polyn\^omes~$\bar A$ et $\bar B$ n'ont pas de racine
commune dans $\bar{\F}_p$. Autrement dit, 
%Serge J'ai enlever les valeurs absolues autour de Delta car Delta est d\'ej\`a une valeur absolue par d\'efinition Ainsi abs(Delta) = 1 ssi Delta =1.
${\Delta(R)}=1$ \'equivaut \`a ce que $R$ ait bonne r\'eduction. 
Le nombre $\Delta(R)$ permet donc de mesurer le {\og d\'efaut \fg} 
de bonne r\'eduction (voir~\cite{Morton-Silverman:Crelle}, \S4,
ainsi que le paragraphe \ref{par:bonne-reduction}).

\begin{rem}
La notion de bonne r\'eduction d\'epend du choix de coordonn\'ees homog\`enes
utilis\'ees pour d\'efinir la r\'eduction de~$R$. 
Si l'on conjugue~$R$ par une transformation
de M\"obius, la fraction rationnelle obtenue peut avoir bonne r\'eduction sans que
ce soit le cas pour~$R$. 
Nous dirons que $R$ est \emph{simple} si~$R$ 
est conjugu\'ee 
par un \'el\'ement de $\Mob$
\`a une transformation ayant bonne r\'eduction (voir~\S \ref{par:bonne-reduction}).
\end{rem}

\begin{ex}\label{ex.pol-br}
Soit $P$ un polyn\^ome de degr\'e~$d\geq 2$, $P(z)=\sum_{n=0}^d a_n z^n$. 
%Serge P ---> P(z)

1) D\'emontrer que $P$ a bonne r\'eduction si et seulement
si $\abs {a_d}=1$ et $\abs{a_n}\leq 1$ pour tout~$n$.

2) D\'emontrer que $P$ est simple si $\abs{a_n}^{d-1}\leq \abs{a_d}^{n-1}$  
pour tout~$n$. (Faire un changement de variables $z'=\lambda z$.)

3) Donner un exemple de polyn\^ome simple qui n'a pas bonne r\'eduction.
\end{ex}

\subsection{Boules de la droite projective}
\label{par.boules}

Pour $a\in\C_p$ et $r\in \R_+^*$, notons $\Bf(a,r)$
et $\Bo(a,r)$ l'ensemble des $z\in\C_p$ tels que $\abs{z-a}\leq r$
et $\abs{z-a} < r$ respectivement.
%Serge Ajout d'un respectivement
Ces ensembles sont appel\'es 
\emph{boules} (ou \emph{disques}) respectivement
circonf\'erenci\'ees et non circonf\'erenci\'ees.
(Contrairement \`a l'usage en topologie g\'en\'erale,
il convient ici de ne pas consid\'erer les singletons comme des boules.)
% Antoine: points ne sont pas des boules... :-(
Le diam\`etre et le rayon d'une telle boule sont tous deux \'egaux \`a~$r$ et,
contrairement au cas archim\'edien, tout point d'une boule en est un centre;
de fait, on a $\Bo(z,r)=\Bo(a,r)$ pour tout $z\in\Bo(a,r)$
et $\Bf(z,r)=\Bf(a,r)$ pour tout $z\in\Bf(a,r)$.
Suivant les valeurs du rayon, ces boules doivent \^etre
distingu\'ees. Si $r\in p^\Q$, on dira que $\Bo(a,r)$ est une \emph{boule ouverte}
et que $\Bf(a,r)$ est une \emph{boule ferm\'ee}.
Si $r\in\R_+^*\setminus p^\Q$, les boules $\Bo(a,r)$
et $\Bf(a,r)$ co\"{\i}ncident; on dit que cette boule est \emph{irrationnelle}.

La terminologie utilis\'ee est uniquement mn\'emotechnique.
En effet, la topologie de~$\C_p$ d\'efinie par la valeur absolue $p$-adique \'etant
totalement discontinue, toutes ces boules sont \`a la fois
ouvertes et ferm\'ees. 

Dans la droite projective~$\P^1(\C_p)$, on dispose, en plus des boules
pr\'ec\'edentes, de boules contenant~$\infty$: on dira qu'une partie~$B$
de~$\P^1(\C_p)$ contenant~$\infty$ est une boule ouverte
(\resp ferm\'ee, \resp  irrationnelle) si son compl\'ementaire
$\complement B$ est une boule ferm\'ee (\resp ouverte, \resp irrationnelle)
dans~$\C_p$.

Chaque boule ouverte $\Bo (a,r)$ est contenue dans une unique plus petite
boule ferm\'ee, \`a savoir $\Bf (a,r)$.
Par contre, $\Bf (a,r)$ contient une infinit\'e de boules ouvertes de 
rayon~$r$. Lorsque $a=0$ et $r=1$, ces boules sont pr\'ecis\'ement
les $B(\alpha)$, pour $\alpha\in\overline{\F_p}$
(fin du \S\ref{par.notations}).

On appellera {\emph{couronne}} une
partie~$C$ de $\P^1 (\C_p)$ dont le
compl\'ementaire est la r\'eunion disjointe de deux
parties qui sont des boules ou des points.

\begin{rem}\begin{enumerate}
\item
La boule ferm\'ee $\Bf (0,1)$  co\"{\i}ncide avec l'anneau 
$\mathscr O$. Si~$a$ appartient \`a~$\mathscr O$, la boule ouverte de centre 
$a$ et de rayon~$1$ co\"{\i}ncide avec la classe de~$a$ modulo l'id\'eal maximal~$\mathfrak m$. 

\item Soit $\alpha$ un \'el\'ement de $\P^1(\bar{\F_p})$ distinct de~$\infty$. 
La fibre $B(\alpha)$ de la projection~$\pi$
s'identifie \`a la boule ouverte de rayon~$1$ 
centr\'ee en tout \'el\'ement de $ \C_p$ congru \`a~$\alpha$  
modulo~$\mathfrak m$.
%\footnote{Trop t\^ot pour le dire si la boule contient l'infini.}
%Serge Je ne vois pas pourquoi . On a d\'ej\`a d\'efini avant les boules centr\'ees \`a l'infini.
%Antoine: pas leur rayon, et pas "congru"

\item Les transformations de M\"obius envoient boules sur boules en pr\'eservant  le type de celles-ci. En effet, le groupe
de M\"obius est engendr\'e par les translations et l'involution $z\mapsto 1/z$
et ces applications envoient boules sur boules en pr\'eservant leur type.
\end{enumerate}
\end{rem}

%Serge Dans l'exo qui suit on s'emb\^ete avec la distinction point / boule, qui n'a pas vraiment lieu d'\^etre puisqu'un point est une boule (ferm\'ee) de rayon 0 (voir la def, on accepte r=0); cette convention ne semble pas induire de contradiction .... A voir.

\begin{ex}\label{ex:boules0}
1) Soient $B$ et $B'$ deux boules d'intersection non vide
telles que $B\cup B'\neq\P^1(\C_p)$.
D\'emontrer que $B\subset B'$ ou $B'\subset B$. 
(Commencer par traiter le cas o\`u $B$ et $B'$ ne contiennent
pas~$\infty$.)

2) En d\'eduire qu'une r\'eunion de boules dont les intersections deux \`a deux 
ne sont pas vides  est ou bien une boule, ou bien le compl\'ementaire
d'un point dans~$\P^1(\C_p)$, ou bien $\P^1(\C_p)$.

3) En d\'eduire qu'une intersection d\'ecroissante de boules 
est soit vide, soit un un point, soit une boule. 
Nous verrons plus loin, notamment au paragraphe 
\S\ref{par:exempleRL}, qu'une telle intersection peut \^etre vide.
\end{ex}

\begin{prop}\label{prop:boules}
Soit~$P\in\C_p[z]$ un polyn\^ome non constant 
% en une variable~$p$-adique 
et soit $B$ une boule de~$\C_p$.

1) L'image de~$B$ par~$P$ est une boule de m\^eme nature.

2) Il existe un entier~$t\in\{1,\dots,\deg(P)\}$
et des boules $B_1,\ldots,B_t$ de m\^eme nature que~$B$, deux \`a deux disjointes,
telles que l'image r\'eciproque de~$B$ par~$P$ soit la r\'eunion des~$B_i$.
\end{prop}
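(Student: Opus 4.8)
L'idée est de se ramener au cas des boules de $\C_p$ par des transformations de Möbius, puis d'étudier directement l'action d'un polynôme sur une telle boule via le polygone de Newton. Pour le point~1), fixons une boule $B$ de~$\C_p$, disons $B=\Bf(a,r)$ ou $\Bo(a,r)$. Quitte à conjuguer~$P$ par les translations $z\mapsto z-a$ au départ et à l'arrivée, on peut supposer $a=0$ et que le centre de la boule image est aussi~$0$; écrivons alors $P(z)=\sum_{n=0}^d a_n z^n$ avec $a_0=P(0)=0$ après une translation supplémentaire au but si besoin. Pour $\abs z\le r$ (ou $<r$), l'inégalité ultramétrique donne $\abs{P(z)}\le \max_n \abs{a_n} r^n$, et l'égalité est atteinte dès que $z$ réalise le maximum de façon «générique», c'est-à-dire en dehors d'un nombre fini de classes résiduelles: c'est exactement la lecture du polygone de Newton de~$P$. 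On pose $\rho=\max_{n\ge 1}\abs{a_n}r^n$; on montre alors que $P$ envoie la boule fermée (\resp ouverte) de rayon~$r$ exactement sur la boule fermée (\resp ouverte) de rayon~$\rho$ centrée en~$P(0)$, et que $\rho\in p^\Q$ dès que $r\in p^\Q$ (car les $\abs{a_n}$ sont dans $p^\Q\cup\{0\}$), tandis que $\rho\notin p^\Q$ si $r\notin p^\Q$ — ce qui garantit que l'image est de même nature.

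**Point clé: surjectivité sur la boule image.** Le c{\oe}ur de l'argument, et le principal obstacle, est de vérifier que l'image est bien \emph{toute} la boule de rayon~$\rho$, et pas seulement contenue dedans. Pour cela j'invoquerais le fait qu'un polynôme non constant à coefficients dans le corps algébriquement clos et complet~$\C_p$ est surjectif sur~$\P^1(\C_p)$, combiné à un contrôle quantitatif: pour $w$ avec $\abs{w-P(0)}\le\rho$, le polynôme $P(z)-w$ a au moins une racine; il reste à voir que l'une de ses racines tombe dans la boule de rayon~$r$. C'est ici qu'intervient de nouveau le polygone de Newton de $P(z)-w$: la pente la plus à droite de ce polygone contrôle la valeur absolue de la plus grande racine, et le choix de~$\rho$ assure précisément que cette racine a valeur absolue $\le r$ (\resp $<r$). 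Il faut traiter avec soin le cas limite $\abs{w-P(0)}=\rho$ pour la boule ouverte, où l'on vérifie qu'aucune racine ne s'échappe strictement.

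**Point~2): structure des images réciproques.** Pour la fibre $P^{-1}(B)$, on reprend la même réduction et l'on étudie $P^{-1}(\Bf(b,\rho))$ où $b=P(0)$, $\rho$ comme ci-dessus. L'ensemble $P^{-1}(\Bf(b,\rho))$ est l'image réciproque par~$P$ d'un fermé, donc fermé et borné; on le décompose en ses composantes «ultramétriques» maximales. Concrètement, j'utiliserais que $P^{-1}(\Bf(b,\rho))$ est une réunion finie de boules $B_i$ deux à deux disjointes: la finitude et le nombre $t\le \deg P$ viennent du fait que chaque $B_i$, restreint à laquelle $P$ est surjectif sur $\Bf(b,\rho)$ (par le point~1)), contient au moins une racine du polynôme $P(z)-b$ de degré~$d$, et ces racines sont dans des boules distinctes; d'où $t\le d$. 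Que ces $B_i$ soient effectivement des boules de même nature que~$B$ découle de l'exercice~\ref{ex:boules0} (une réunion emboîtée de boules issues de~\ref{prop:boules}~1) appliqué localement, ou l'analyse directe du polygone de Newton de $P(z)-w$ en faisant varier le centre). Le point technique délicat est de montrer la \emph{disjonction} et le fait que chaque morceau est connexe au sens ultramétrique, c'est-à-dire une vraie boule et non une couronne: cela se lit sur le polygone de Newton, la condition $\abs{P(z)-b}\le\rho$ ne découpant qu'un seul intervalle de rayons autour de chaque racine lorsque $\rho$ a été choisi maximal. Enfin, on transporte tout par les transformations de Möbius inverses, qui envoient boules sur boules en préservant leur type (remarque précédente), ce qui conclut.
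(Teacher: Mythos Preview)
Your overall plan matches the paper's proof (which refers to Amice for part~1) and only details part~2) in the case $B=\Bf(0,1)$): decompose $P^{-1}(B)$ into maximal balls, note they are pairwise disjoint by the ultrametric inequality, and bound their number by~$\deg P$ by showing each contains a root of~$P-b$. Two steps in your sketch need tightening, however.

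First, the surjectivity $P(B_i)=B$ does \emph{not} follow from point~1), which only tells you that $P(B_i)$ is \emph{some} ball contained in~$B$. What forces equality is the \emph{maximality} of~$B_i$, which you set up but do not invoke here: if $P(B_i)\subsetneq B$, the sup formula $r\mapsto\max_k|a_k|r^k$ is continuous in~$r$, so a slightly larger ball around the same center would still map into~$B$, contradicting maximality. The paper bypasses surjectivity and proves directly that each~$B_i$ contains a zero of~$P$ via the factorization $P(z)=c\prod_j(z-z_j)$: if no~$z_j$ lay in~$B_i=\Bf(b_i,r_i)$, then $|z-z_j|=|b_i-z_j|$ for all $z\in B_i$, so $|P|$ is constant on~$B_i$ and in fact on the strictly larger ball $\Bo(b_i,\min_j|b_i-z_j|)$, again contradicting maximality.

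Second, you are vaguest precisely where the paper is most explicit: why the~$B_i$ have the same nature as~$B$. For $B=\Bf(0,1)$, the paper argues as follows. The maximal ball~$B_i$ has some radius~$r$; by maximality one must have $\sup_{\Bo(b_i,r)}|P|=1$, and the formula $\sup_{\Bo(b_i,r)}|P|=\max_k|a_k(b_i)|r^k$ (with $|a_k(b_i)|\in p^{\Q}\cup\{0\}$) then forces $r\in p^{\Q}$ and $B_i=\Bf(b_i,r)$. Your Newton-polygon heuristic points in the right direction, but this is the step that actually pins down the type of the preimage balls, and it deserves to be written out.
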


L'exercice suivant sera utile \`a la d\'emonstration. 
\begin{ex}
Soit $P(z)=\sum a_k z^k$ un polyn\^ome de~$\C_p[z]$.
D\'emontrer que 
\[
\sup_{\Bo(0,r)}\abs{P} = \sup_{\Bf(0,r)} \abs{P} = \max ( \abs{a_k} r^k)
\]
(voir la preuve du lemme \ref{lemm.gauss} pour un argument complet).
\end{ex}

\begin{proof}[D\'emonstration (d'apr\`es~\cite{amice75}, p.~159)]
Le lecteur pourra consulter \cite{amice75} pour une preuve
de l'assertion~1).
On ne traite en outre l'assertion~2) que dans le cas o\`u $B=\Bf(0,1)$.
Soit $b$ un point de~$\C_p$ tel que $\abs{P(b)}\leq 1$.
La boule unit\'e $\Bf(0,1)$ \'etant ouverte dans~$\C_p$, il existe un nombre
r\'eel~$r>0$ tel que $\abs{P(z)}\leq 1$ pour tout $z\in\Bf(b,r)$.
La r\'eunion~$B$ de toutes ces boules~$\Bf(b,r)$ est une boule ouverte, 
ferm\'ee ou irrationnelle; 
elle est maximale parmi les boules
contenues dans~$P^{-1}(\Bf(0,1))$ et contenant~$b$.
Notant $r$ le rayon de~$B$, on a $B=\Bo(b,r)$ ou $B=\Bf(b,r)$;
en particulier, $\Bo(b,r)\subset B\subset \Bf(b,r)$.
%Serge L\'eg\`ere variation ci-dessus et ci-dessos
Si  $\sup_{\Bo(b,r)}\abs P<1$, la formule de l'exercice
pr\'ec\'edent prouve qu'on peut augmenter~$r$ tout en restant
contenu dans~$P^{-1}(\Bf(0,1))$. Par suite, $\sup_{\Bo(b,r)}\abs P=1$,
et la formule en question entra\^{\i}ne que $r\in p^\Q$
puis que $B=\Bf(b,r)$. 

%Serge Petits ajouts dans ce qui suit. 
Si deux boules de $\C_p$ se coupent, l'une est incluse dans l'autre. 
Si l'on note $B_1,B_2 \ldots$ les boules maximales construites
en faisant varier~$b$,
on obtient donc des boules ferm\'ees deux \`a deux disjointes dont 
la r\'eunion est \'egale \`a~$P^{-1}(B)$.

Pour conclure la d\'emonstration, il suffit de prouver que 
$P$ s'annule sur chacune des boules~$B_i$.
Posons $B_i=\Bf(b_i,r_i)$ et~\'ecrivons $P$
sous la forme
\[
P(z)=c\prod_{j=1}^{\deg(P)}(z-z_j),
\] 
o\`u $z_1,\dots,z_{\deg(P)}$
sont les z\'eros de~$P$, r\'ep\'et\'es suivant leur multiplicit\'e.
Si $P$ ne s'annule pas sur~$B_i$, on a $\abs{z_j-b_i}>r_i$
pour tout~$j$, 
donc  $\abs{z-z_j}=\abs{(b_i-z_j)+(z-b_i)} = \abs{b_i-z_j}$
pour tout~$j$ et tout $z\in B_i$.
Ainsi, $\abs{P}$ est constant sur~$B_i$,
de module $\abs c\prod \abs{b_i-z_j}$.
Ce calcul reste valable  pour tout point~$z$
de la boule~$\Bo(b_i,\min(\abs{z_j-b_i}))$, et cette boule  contient strictement~$B_i$.
Cette contradiction prouve que $P$ s'annule sur~$B_i$.
\end{proof}

\begin{ex}\label{ex:boules}
1) On consid\`ere le polyn\^ome $P(z)=z^2-1$. Pour tout nombre r\'eel~$r>0$, d\'eterminer l'image r\'eciproque par~$P$ des boules $\Bo(0,r)$ et $\Bf(0,r)$. En d\'eduire que la proposition pr\'ec\'edente ne s'\'etend pas aux boules contenant~$\infty$.

2) Soient $R_1$ et $R_2$ les fractions rationnelles d\'efinies par  $R_1(z)=z/p-z^2$
et $R_2(z)=z^2-1/z$. D\'ecrire l'image r\'eciproque de la boule $\Bf (0,r)$ par $R_1$ et 
par $R_2$  en fonction de $r>0$.

3) Compl\'eter la d\'emonstration de la proposition~\ref{prop:boules} afin de traiter
le cas g\'en\'eral.
\end{ex}

\begin{ex}\label{ex:boules-et-ptfixes}
1) Soit $P$ et $Q$ deux polyn\^omes d'une variable \`a coefficients dans~$\C_p$. 
%Serge Chgt ci-dessus
Soit $B$ 
une boule de~$\C_p$. On note $r$ le rayon de la boule $P(B)$ et
l'on suppose que $\abs{P(z)-Q(z)} < r$ pour tout point $z$ de $B$. Montrer
que $Q(B)=P(B)$.

2) Soit $P$ un \'el\'ement de~$\C_p[z]$ et soit $B$ une boule. 
%Serge Chgt ci-dessus
On suppose que $P(B)$ contient strictement la boule $B$. Montrer que
$P$ a un point fixe dans $B$. (Indication : se ramener au cas
o\`u $B$ est centr\'ee \`a l'origine, puis montrer que le polyn\^ome
$Q$ d\'efini par $Q(z)=P(z)-z$ envoie $B$ sur $P(B)$ surjectivement)
\end{ex}

Nous nous sommes content\'es de d\'ecrire l'action des polyn\^omes
sur les boules, mais les r\'esultats qui viennent d'\^etre \'enonc\'es 
peuvent \^etre \'etendus aux fractions rationnelles. Le lecteur pourra se
reporter, par exemple, au chapitre 2 de~\cite{Rivera-Letelier:Asterisque}
ou aux pr\'eliminaires de~\cite{Benedetto:2001} pour des \'enonc\'es ou
des r\'ef\'erences pr\'ecises.

\subsection{Construction {\og \`a la Dedekind\fg} de l'espace
hyperbolique~$p$-adique}\label{par:dedekind}

L'espace hyperbolique $p$-adique~$\Hp$, que nous allons maintenant d\'efinir, est 
r\'eunion de trois sous-ensembles disjoints
not\'es $\HpQ$, $\Hp^{\R\setminus\Q}$ et $\Hp^\sing$
dont les \'el\'ements sont d\'enomm\'es respectivement 
{\emph{points rationnels, irrationnels}} et  {\emph{singuliers}}. 
L'ensemble $\HpR$ d\'esignera
la r\'eunion de $\HpQ$ et $\Hp^{\R\setminus\Q}$.

\begin{defi}
On appelle \emph{coupure irrationnelle} une partition $S=\{B,B'\}$
de~$\P^1(\C_p)$ en deux boules irrationnelles disjointes.

On appelle \emph{coupure canonique} la partition
$S_\can=\{ B(\alpha)\, ; \, \alpha\in\P^1(\overline{\F_p})\}$ de~$\P^1(\C_p)$
en les boules ouvertes $B(\alpha)$ index\'ees par~$\P^1(\bar\F_p)$.

Une \emph{coupure rationnelle} est une partition de~$\P^1(\C_p)$
qui se d\'eduit de la coupure canonique par un \'el\'ement de $\Mob$.
\end{defi}

Les \'el\'ements de la coupure canonique sont donc les fibres de 
$\pi : \P^1(\C_p)\to \P^1(\bar{\F_p})$. 

On remarquera qu'une coupure irrationnelle est d\'etermin\'ee par
sa boule irrationnelle qui ne contient pas~$\infty$. De m\^eme, une 
coupure rationnelle est d\'etermin\'ee  par son unique boule ouverte
contenant l'infini, donc aussi par son compl\'ementaire, 
qui est une boule \emph{ferm\'ee} ne contenant pas~$\infty$. 
On identifiera toute boule irrationnelle (\resp ferm\'ee) avec la coupure
irrationnelle (\resp rationnelle) qui lui est ainsi associ\'ee. 
Ceci d\'efinit une bijection entre l'ensemble
des boules ferm\'ees ou irrationnelles ne contenant pas l'infini et l'ensemble des coupures.
% Serge Attention : j'ai rajout\'e "ne contenant pas l'infini" 

Pour tout nombre r\'eel~$r>0$, on notera ainsi  $S(r)$  la coupure associ\'ee
\`a la boule~$\Bf(r)$; c'est une coupure irrationnelle 
si $\log_p (r)$ est irrationnel,
et une  coupure rationnelle
(obtenue par homoth\'etie de rapport~$\xi$ avec $\abs{\xi}=r$ \`a partir de la coupure 
%Serge ajout de \xi pour d\'efinir homoth\'etie
canonique) si $\log_p (r)$ est rationnel. 
Les coupures $S(a,r)$ sont d\'efinies
de fa\c{c}on similaire.

Ce dictionnaire entre coupures et boules (ferm\'ees ou irrationnelles) 
sera constamment utilis\'e par la suite. 

\medskip

On note $\HpR$ l'ensemble des coupures. C'est la r\'eunion de deux
parties, $\HpQ$ et $\Hp^{\R\setminus\Q}$,
correspondant respectivement aux coupures rationnelles et irrationnelles.  
Puisque les transformations de M\"obius appliquent boules sur boules
en pr\'eservant leur type,
le groupe~$\Mob$ agit sur~$\HpR$ 
en stabilisant~$\HpQ$ et~$\Hp^{\R\setminus\Q}$. 

%Serge l\'eg\`ere pr\'ecision de l'\'enonc\'e dans l'exo suivant
\begin{ex}\label{exo.stabilisateur}
D\'emontrer que le sous-groupe de $\Mob$ stabilisant
la coupure canonique co\"{\i}ncide avec le groupe
$\PGL_2(\mathscr O)$ et qu'un \'el\'ement de $\PGL_2(\mathscr O)$ fixe
chaque \'el\'ement de $S_\can$ si et seulement si sa r\'eduction modulo
$ \mathfrak m$ est l'identit\'e.  En d\'eduire que  $\HpQ$ s'identifie \`a l'espace
homog\`ene $\PGL_2(\C_p)/\PGL_2(\mathscr O)$.
\end{ex}

\subsection{Structure d'arbre r\'eel}

Un \emph{arbre r\'eel} est 
un espace m\'etrique $(X, \dist)$ tel qu'il existe un unique
arc topologique entre deux points distincts donn\'es et dans lequel
tout arc peut-\^etre param\'etr\'e isom\'etriquement par un intervalle de $\R$.
Un tel arc isom\'etrique \`a un intervalle sera appel\'e \emph{chemin g\'eod\'esique.}
Le but de ce paragraphe est de munir $\HpR$ d'une structure d'arbre
r\'eel en suivant la construction pr\'esent\'ee dans~\cite{Rivera-Letelier:CMH}. 

\begin{lemm}
Soient $S, S'$ deux coupures distinctes.

\begin{enumerate}
\item Il existe un unique couple $(B,B')$ de boules
ouvertes ou irrationnelles 
tel que $B\in S$, $B'\in S'$ et $\P^1(\C_p)=B\cup B'$.
\item Il existe $g\in\Mob$ et des nombres r\'eels strictement 
positifs $r_0$ et $r_1$ tels que 
$g(B\cap B')$ soit la couronne $\{r_0<\abs z<r_1\}$;
de plus, le rapport~$r_1/r_0$ ne d\'epend pas du choix d'une telle
homographie~$g$.
\end{enumerate}
\end{lemm}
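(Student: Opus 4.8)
First I would establish part (1), the existence and uniqueness of the complementary pair $(B,B')$ with $B\in S$, $B'\in S'$ and $\P^1(\C_p)=B\cup B'$. Recall from the dictionary set up above that each coupure is either $\{B_0,B_1\}$ with $B_0$ a closed (resp. irrational) ball not containing $\infty$ and $B_1$ the complementary open (resp. irrational) ball containing $\infty$, or the canonical-type coupure. In every case a coupure consists of two disjoint balls (or a ball and a point, in the degenerate canonical situation — but here, following the definitions given, the canonical coupure is partitioned into the $B(\alpha)$'s; I will treat $S$, $S'$ via their associated closed-or-irrational balls $Z,Z'$ not containing $\infty$). The idea is: since $S\ne S'$, the balls $Z$ and $Z'$ are distinct. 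Using Exercice~\ref{ex:boules0}(1), two balls with nonempty intersection are nested unless their union is everything. I would argue that exactly one of the four pairs (one part of $S$, one part of $S'$) has union equal to $\P^1(\C_p)$, by a case analysis on the relative position of $Z$ and $Z'$: either they are disjoint (then their complements cover $\P^1(\C_p)$ and one checks $B=\complement Z'$, $B'=\complement Z$... — careful, I want $B\in S$), or one contains the other, say $Z\subsetneq Z'$ — then $Z' \cup (\complement Z^{\mathrm{open}})=\P^1(\C_p)$ where the first is the part of $S'$ not containing $\infty$ is wrong; rather I take $B'$ to be the part of $S'$ containing $\infty$ (which contains $\complement Z'$) and $B$ the part of $S$ containing... — I will organize this cleanly by noting that among the two parts of $S$ and the two parts of $S'$, the covering relation $X\cup Y=\P^1$ holds for exactly one transversal pair, which follows from Exercice~\ref{ex:boules0}(1) applied to complements: $X\cup Y=\P^1$ iff $\complement X\subset Y$ iff $\complement X$ and $\complement Y$ are disjoint, and exactly one of the four pairs $(\complement X,\complement Y)$ consists of disjoint balls because the $\complement$'s of the two parts of a coupure are themselves nested (one inside the other) and the whole configuration of these four balls is totally ordered by inclusion in exactly the way that forces the claim. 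The open-or-irrational nature of $B,B'$ is then forced because the part of a coupure containing $\infty$ is always open or irrational (never closed), and in the disjoint case one uses that $\complement$ of a closed ball is open.

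**For part (2),** the strategy is to normalize. By part (1) we have $B\cup B'=\P^1(\C_p)$ with $B\cap B'\ne\emptyset$ (it is nonempty and not a single point, since $B\ne\complement B'$ as $S\ne S'$). The intersection $C:=B\cap B'$ is the complement of the disjoint union $\complement B\sqcup \complement B'$, and both $\complement B$, $\complement B'$ are closed-or-irrational balls (or points); hence $C$ is a couronne in the sense defined just before the lemma. Now I would use the three-transitivity of $\Mob=\PSL_2(\C_p)$ on $\P^1(\C_p)$, or more economically the fact (Remarque item 3, and Exercice~\ref{exo.stabilisateur}) that $\Mob$ acts on balls preserving type and acts transitively enough: choose $g\in\Mob$ sending the ball $\complement B'$ (the one not containing... — I mean the ``small'' closed ball $\complement B$, which does not contain $\infty$ after possibly composing with $z\mapsto 1/z$) to a ball centered at $0$, and sending $\complement B'$ to a ball ``centered at $\infty$'', i.e. of the form $\{|z|\ge r_1\}\cup\{\infty\}$ or $\{|z|>r_1\}\cup\{\infty\}$. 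Concretely: pick a point $a\in\complement B$ and a point $a'\in\complement B'$; after translating so that $a=0$ and then applying $z\mapsto 1/(z-a')$ composed appropriately, we may assume $\complement B=\Bf(0,r_0)$ or $\Bo(0,r_0)$ and $\complement B'$ is a ball around $\infty$ with inner radius $r_1$. Then $g(C)=g(B\cap B')=\{r_0<|z|<r_1\}$ up to whether the endpoints are in or out (which matches the open/closed/irrational types consistently), giving the desired couronne $\{r_0<|z|<r_1\}$ after a final rescaling by a scalar to put it in exactly that shape if one insists on literally $\{r_0<|z|<r_1\}$ — actually no rescaling is needed, the annulus is already of that form.

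**Well-definedness of the ratio $r_1/r_0$.** The remaining point is that $r_1/r_0$ does not depend on the choice of $g$. Here the plan is: if $g$ and $h$ both send $C$ to a standard annulus $\{r_0<|z|<r_1\}$ resp. $\{s_0<|z|<s_1\}$, then $\phi:=h\circ g^{-1}\in\Mob$ maps the annulus $\mathcal A_r=\{r_0<|z|<r_1\}$ onto $\mathcal A_s=\{s_0<|z|<s_1\}$. I would show that such a Möbius map must either preserve $\{0,\infty\}$ or swap them, because the two ``ends'' of the annulus (the two complementary balls $\{|z|\le r_0\}$ and $\{|z|\ge r_1\}$) must be sent to the two ends of $\mathcal A_s$, and these ends are distinguished as the two points of the associated coupure-boundary. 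A Möbius map fixing $0,\infty$ is $z\mapsto \lambda z$, which sends $\mathcal A_r$ to $\{|\lambda|r_0<|z|<|\lambda|r_1\}$, so $s_i=|\lambda|r_i$ and $s_1/s_0=r_1/r_0$; a Möbius map swapping $0,\infty$ is $z\mapsto \mu/z$, sending $\mathcal A_r$ to $\{|\mu|/r_1<|z|<|\mu|/r_0\}$, so again $s_1/s_0=r_1/r_0$. Thus the ratio (equivalently, $\log_p(r_1/r_0)$, the ``modulus'' of the annulus) is a genuine invariant — and this number is exactly what will serve as $\dist(S,S')$ in the tree metric of the next paragraph.

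**Expected main obstacle.** The bookkeeping in part (1) — keeping straight which of the four candidate pairs works and verifying the ``open or irrational'' nature — is fiddly but routine once one consistently uses the identification of coupures with closed/irrational balls avoiding $\infty$ and the nesting dichotomy of Exercice~\ref{ex:boules0}. The genuinely substantive step is the well-definedness of $r_1/r_0$: it requires knowing that a Möbius transformation between two standard annuli must respect the pair of ``ends,'' which in turn rests on the fact that $\Mob$ preserves the ball structure and that the two complementary balls of an annulus are intrinsically attached to it (they are recovered as the two coupures ``at the boundary'' of $C$). Making this last point rigorous without circularity — i.e. characterizing the ends of $C$ in a way manifestly preserved by $\Mob$ — is where I would spend the most care.
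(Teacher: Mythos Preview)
Your overall strategy matches the paper's, but there are two concrete gaps.

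\textbf{Part (1).} You repeatedly speak of ``the two parts of $S$'' and reduce uniqueness to an analysis of ``four pairs''. But a \emph{rational} coupure is a partition of $\P^1(\C_p)$ into infinitely many open balls $B(\alpha)$, $\alpha\in\P^1(\overline{\F_p})$ --- not two --- so this framework collapses precisely in the rational case (which you yourself note early on, then set aside). The paper's uniqueness argument works uniformly and avoids counting parts: if $(B_1,B'_1)$ and $(B_2,B'_2)$ both satisfy the conclusion with $B_1\neq B_2$, then $B_1\cap B_2=\emptyset$ and $B'_1\cap B'_2=\emptyset$ (distinct members of a partition are disjoint), whence $B'_2\subset\complement B'_1\subset B_1$ and symmetrically $B_1\subset B'_2$, forcing $B_1=B'_2$ and hence $S=S'$. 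For existence the paper normalizes via your balls $Z,Z'$ (called $D,D'$) so that $D'\subset D$, and then treats the rational and irrational cases for~$D$ separately; in the rational case one must pick the correct open ball among the infinitely many members of~$S$, which your sketch does not address.

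\textbf{Part (2), invariance of $r_1/r_0$.} You assert that $\phi=h\circ g^{-1}$ must fix or swap the \emph{points} $0$ and~$\infty$. What your ``ends'' argument actually yields is only that $\phi$ permutes the two complementary \emph{balls} of the annulus; this does not force $\phi(0)\in\{0,\infty\}$ (for instance, any translation by~$c$ with $\abs c\le s_0$ preserves $\{s_0<\abs z<s_1\}$ without fixing~$0$). The paper closes this gap by a step-by-step reduction: after possibly replacing~$g$ by $1/g$ one arranges $\abs{g(0)}\le s_0$; subtracting~$g(0)$ gives $g(0)=0$; writing $g(z)=z/(cz+d)$ and conjugating by the inversion reduces to an affine map; a rescaling leaves $g(z)=z+c$, and a short case analysis on~$\abs c$ then forces $g(C)=C$, hence $s_1/s_0=r_1/r_0$. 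You correctly identified this as the delicate step; the missing ingredient is exactly this reduction from ``permutes the end-balls'' to an explicit normal form.
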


\begin{proof}
1) Commen\c{c}ons par d\'emontrer l'unicit\'e du couple
de boules $(B,B')$. Soient en effet deux couples $(B_1,B'_1)$
et $(B_2,B'_2)$ de boules telles que $\P^1(\C_p)=B_1\cup B'_1=B_2\cup B'_2$,
o\`u $B_1,B_2\in S$ et $B'_1,B'_2\in S'$. 
Si $B_1=B_2$, $B'_1$ et $B'_2$ ont tout point de~$\complement B_1$ en commun, 
donc sont égales, par définition d'une coupure. Sinon, $B_1$ et~$B_2$
sont disjointes ainsi, par le même argument, que $B'_1$ et~$B'_2$.
On a donc  $B'_2\subset\complement B'_1\subset B_1$,
car $B_1\cup B'_1=\P^1(\C_p)$. Par sym\'etrie, $B_1\subset B'_2$,
d'o\`u  $B_1=B'_2$. 
Par d\'efinition d'une coupure, cela entra{\^{\i}}ne  $S=S'$, 
ce qui est contraire \`a l'hypoth\`ese.

D\'emontrons maintenant l'existence d'un tel couple.
Soient $D$ et $D'$ les boules ferm\'ees ou irrationnelles
ne contenant pas~$\infty$ correspondant aux coupures~$S$ et~$S'$;
par construction, $\complement D\in S$ et $\complement D'\in S'$.
Les boules~$\complement D$ et $\complement D'$ ont le point~$\infty$
en commun; si $\complement D\cup\complement D'\neq\P^1(\C_p)$,
l'une est contenue dans l'autre. Supposons, ce qui est loisible,
que $\complement D\subset\complement D'$, c'est-\`a-dire $D'\subset D$.
Par translation, on peut alors supposer $D'=\Bf(r')$ et $D=\Bf(r)$,
o\`u $r$ et~$r'$ sont deux nombres r\'eels tels que $r'\leq r$.
On a $r\neq r'$ car $S\neq S'$.
Si $D$ est irrationnelle, $D\in S$ et $D\cup\complement D'=\P^1(\C_p)$ ; il
suffit donc de prendre $B=D$ et $B'=\complement D'$.
En revanche, si $D$ est rationnelle, nous pouvons,
quitte \`a effectuer une homographie, supposer que $S=S_\can$,
d'o\`u $D=\Bf(1)$;
alors, $\Bo(1)\in S$. En prenant $B= \Bo(1)$ et
$B'= \complement D'$, nous obtenons  $B\cup B' =\P^1(\C_p)$.

2) Comme $\Mob$ est $2$-transitif, on peut supposer
que $0$ appartient \`a~$B\setminus B'$ et $\infty$ appartient \`a ~$B'\setminus B$.
Alors, il existe $r_0$ et $r_1$
tels que $B=B(r_1)$, $B'=\{\abs z>r_0\}$; comme $S\neq S'$,
$r_0\neq r_1$.
Comme $B\cup B'=\P^1(\C_p)$, on a $r_0\leq r_1$, d'o\`u $r_0<r_1$.
Alors, $B\cap B'$ est la couronne indiqu\'ee.

Soit $C$ cette couronne et soit $g\in\Mob$ tel que $g(C)$
soit encore une couronne $\{s_0<\abs z<s_1\}$.
Quitte \`a  changer $g$ en~$1/g$, ce qui change $(s_0,s_1)$
en~$(1/s_1,1/s_0)$ et ne modifie pas le rapport $s_1/s_0$,
on suppose que $\abs{g(0)}\leq s_0$;
alors, remplacer $g$ par $g-g(0)$ ne modifie pas l'image~$g(C)$,
ce qui permet de supposer $g(0)=0$. Nous pouvons donc
maintenant \'ecrire $g$ sous la forme $g(z)=z/(cz+d)$. 
En conjuguant~$g$ par $z\mapsto 1/z$,
nous sommes ramen\'es au cas o\`u $g(z)=c+zd$ et $g(\infty)=\infty$.
Quitte \`a  composer~$g$ par l'homoth\'etie $z\mapsto z/d$, ce qui 
ne change pas le rapport $s_1/s_0$,
on peut supposer que $d=1$ et $g(z)=z+c$.
Si $s_0<\abs c<s_1$, on trouve que $g(0)\in g(C)$,
donc $0\in C$ ce qui est absurde; si $\abs c\geq s_1$,
on a $\abs{g^{-1}(z)}=\abs{z-c}=s_1$ pour tout $z\in  g(C)$,
ce qui est absurde car $g^{-1}(g(C))=C$; on a donc
$\abs g\leq s_0$ et $g(C)=C$. En particulier, $s_1/s_0$ est \'egal
\`a $r_1/r_0$.
\end{proof}

Utilisons les notations du lemme pr\'ec\'edent et posons d\'esormais $\dist(S,S')=\Log_p(r_1/r_0)$, 
de sorte que $p^{\dist(S,S')}=r_1/r_0$.
Par d\'efinition, cette distance est le {\emph{module}} de la couronne $g(B\cap B')$.
Lorsque~$S$ est \'egale \`a~$S'$, nous poserons  $\dist (S,S') = 0$.
On notera que si~$S$ est une coupure rationnelle,
alors $S'$ est une coupure rationnelle si et seulement
si $\dist(S,S')\in\Q$. 

On d\'emontre que  \emph{$\dist$ est une distance sur l'espace $\HpR$ et l'espace m\'etrique $(\HpR,\dist)$ est un arbre r\'eel}.
Nous renvoyons \`a \cite{Rivera-Letelier:compositio} pour plus de d\'etails,
nous contentant de citer le lemme suivant qui est le coeur 
de la d\'emonstration.

\begin{lemm}\label{lemm:distances}
Soient $S, S',S''$ trois coupures distinctes. De deux choses l'une:
\begin{enumerate}
\item 
Il existe $g\in\Mob$ et des nombres r\'eels $r,r',r''$ strictement positifs
tels que $g(S)=S(r)$, $g(S')=S(r')$ et $g(S'')=S(r'')$.
Si $S$, $S'$ et $S''$ sont telles que $r\leq r'\leq r''$, on a alors
\[ 
\log_p(\frac{r''}{r'})=\dist (S,S'')=\dist(S,S')+\dist(S',S'')=\Log_p \left(\frac{r'}r\right) + \Log_p \left(\frac{r''}{r'}\right) .\]
\begin{figure}[htb]
% \centering\epsfig{figure=distance.pstex}
\centering
\includegraphics{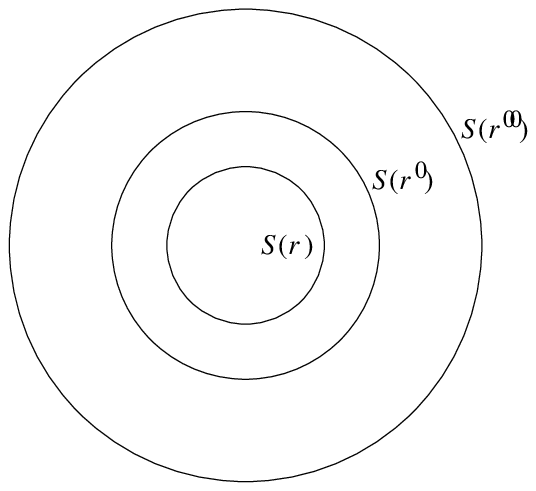} \includegraphics{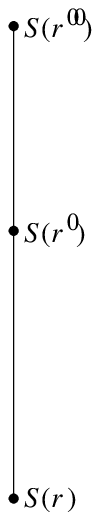} \\
\caption{\textsc{Distances, boules et coupures}: premier cas du lemme~\ref{lemm:distances}. } 
\label{fig:arbre:1}
\end{figure}

\item
Il existe $g\in\Mob$, $a\in\C_p$ et des nombres
r\'eels $r,r',r''$ strictement positifs tels que 
$\max(r,r') < \abs a  < r''$ de sorte que
%% NB: on avait mis \leq 
$g(S) =S(a,r)$, $g(S')=S(r')$ et $g(S'')=S(r'')$.
Dans ce cas, $\Bf(0,\abs a)$ est la plus petite boule qui
contient $\Bf(a,r)$ et $\Bf(r')$. Si l'on note $S^*$ la coupure associ\'ee \`a la boule $g^{-1}\Bf(0,\abs a)$, on a alors 
\[ 
\begin{array}{l}
\dist(S,S')=\dist(S,S^*)+\dist(S^*,S'),\\
 \dist(S,S'')=\dist(S,S^*)+\dist(S^*,S''),\\
 \dist(S',S'')=\dist(S',S^*)+\dist(S^*,S'').
\end{array}
\]
\begin{figure}[htb]
% \centering\epsfig{figure=distance.pstex}
\centering
\includegraphics{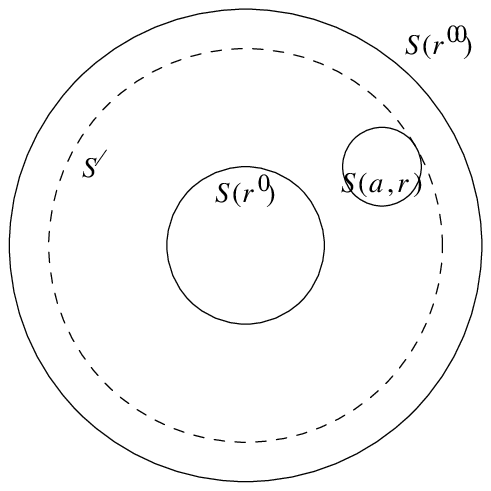} \includegraphics{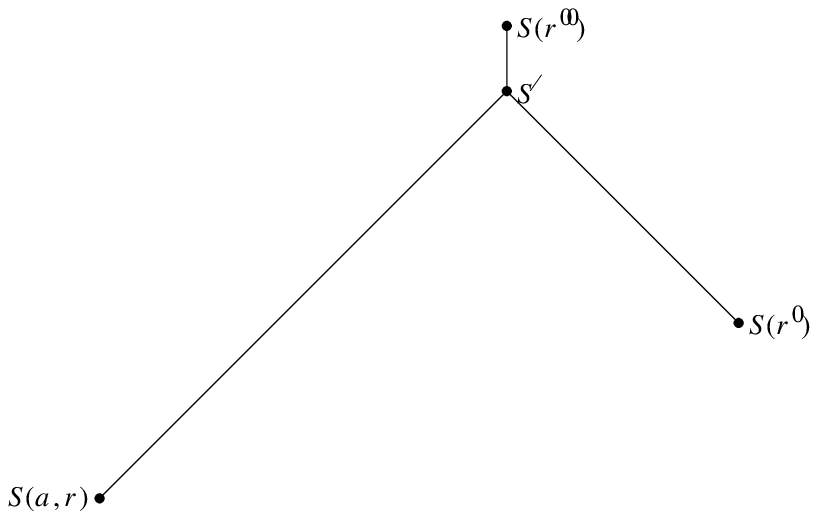} \\
\caption{\textsc{Distances, boules et coupures}: second cas du lemme~\ref{lemm:distances}. } 
\label{fig:arbre:2}
\end{figure}
\end{enumerate}
\end{lemm}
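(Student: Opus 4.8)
Le plan consiste à traduire l'énoncé dans le langage des boules grâce au dictionnaire du \S\ref{par:dedekind} (coupures $\leftrightarrow$ boules fermées ou irrationnelles ne contenant pas $\infty$), à normaliser la configuration des trois boules associées par une seule transformation de Möbius, puis à lire toutes les distances sur le lemme précédent, qui calcule déjà $\dist$ comme le module d'une couronne séparante.

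On commence par attacher à $S, S', S''$ les boules fermées ou irrationnelles $D, D', D''$ de $\C_p$ (ne contenant pas $\infty$) fournies par le dictionnaire. D'après l'exercice~\ref{ex:boules0}, deux quelconques de ces boules sont ou bien emboîtées, ou bien disjointes, puisque deux boules ne contenant pas $\infty$ ne peuvent recouvrir $\P^1(\C_p)$. En examinant les possibilités pour les trois paires, on obtient l'alternative annoncée : ou bien les trois coupures sont situées sur un même chemin géodésique de $\HpR$ — c'est le premier cas — ou bien elles forment un vrai tripode, de coupure de branchement $S^*$ distincte des trois. Pour expliciter le premier cas, on observe qu'après une homographie envoyant les deux extrémités du chemin porteur sur $0$ et $\infty$ (la $2$-transitivité de $\Mob$), ou plus concrètement après translation par un point de la plus petite de trois boules emboîtées, les trois coupures deviennent $S(r), S(r'), S(r'')$. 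Dans le second cas, on localise d'abord $S^*$ : si, disons, $D''$ contient les deux boules disjointes $D$ et $D'$, alors, après normalisation, $\Bf(0,\abs a)$ est la plus petite boule contenant $\Bf(a,r)$ et $\Bf(r')$ et $S^*$ en est la coupure ; une homographie convenable — plaçant la zone de branchement à l'origine, une branche rentrante vers $0$ et la branche sortante vers $\infty$ — fournit la forme normale $g(S)=S(a,r)$, $g(S')=S(r')$, $g(S'')=S(r'')$ avec $\max(r,r')<\abs a<r''$.

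Une fois la configuration mise en forme normale, les identités de distances résultent du lemme précédent. Dans le premier cas, la couronne séparant $S(r)$ et $S(r')$ (avec $r<r'$) est $\{r<\abs z<r'\}$, de module $\Log_p(r'/r)$, et de même pour les autres paires, d'où $\dist(S,S'')=\Log_p(r''/r)=\Log_p(r''/r')+\Log_p(r'/r)$. Dans le second cas, on vérifie pareillement que $\dist(S(a,r),S(\abs a))=\Log_p(\abs a/r)$, $\dist(S(r'),S(\abs a))=\Log_p(\abs a/r')$ et $\dist(S(\abs a),S(r''))=\Log_p(r''/\abs a)$, puis que la couronne séparant deux quelconques de $S, S', S''$ se décompose en $S^*$, ce qui donne les trois relations d'additivité. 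Chacune de ces vérifications est un court calcul avec l'inégalité ultramétrique, du type de ceux de la preuve de la proposition~\ref{prop:boules}.

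L'obstacle principal est la normalisation homographique du second cas, et surtout le suivi de la position de $\infty$ par rapport à la coupure de branchement $S^*$ : la forme normale de l'énoncé suppose tacitement que l'une des trois coupures, à savoir $S''$, est située sur le chemin joignant $S^*$ à $\infty$ (de façon équivalente, que $D''$ contient les deux autres boules), alors qu'a priori le tripode peut être {\og symétrique\fg}, avec $\infty$ pointant dans une quatrième direction en $S^*$ et aucune $D_i$ ne contenant les autres. Dans cette situation on ne peut atteindre la forme normale affichée, mais on peut encore normaliser de sorte que $S^*=S_\can$ avec $D$, $D'$, $D''$ dans trois boules résiduelles distinctes, et les trois relations d'additivité — qui sont le contenu réel — restent valables puisque $S^*$ demeure sur le chemin géodésique joignant deux quelconques de $S, S', S''$. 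Tout du long, il faut prendre garde de ne pas invoquer la structure d'arbre de $\HpR$ elle-même, puisque ce lemme en est précisément la clé ; on ne dispose que du lemme précédent et de la géométrie élémentaire des boules du \S\ref{par.boules}.
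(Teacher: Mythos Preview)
Le papier ne donne pas de démonstration de ce lemme : il l'énonce et renvoie à Rivera-Letelier~\cite{Rivera-Letelier:compositio} pour les détails. Il n'y a donc pas de preuve du papier à laquelle comparer la vôtre ; votre esquisse est à évaluer pour elle-même.

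Votre plan est correct dans ses grandes lignes : traduire en boules, normaliser par une transformation de Möbius, puis lire les distances sur le lemme précédent. La distinction des cas selon la configuration des trois boules (emboîtées ou non) est la bonne, et les calculs de modules d'anneaux que vous annoncez donnent bien les identités voulues.

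Il y a toutefois une erreur dans votre dernier paragraphe. Vous affirmez que, dans le cas du tripode {\og symétrique\fg} où les trois boules $D,D',D''$ sont deux à deux disjointes et $\infty$ pointe dans une quatrième direction en~$S^*$, {\og on ne peut atteindre la forme normale affichée\fg}. C'est faux : l'élément $g\in\Mob$ est à votre disposition et rien n'oblige à le choisir affine. Prenez un point $p\in D''$ et posez $h(z)=1/(z-p)$. Alors $h(D'')$ est une boule contenant~$\infty$, donc la boule fermée associée à $h(S'')$ est $\Bf(0,r'')$ pour un certain $r''>0$, et $h(D),h(D')\subset\Bo(0,r'')$ puisque $D,D'$ sont disjoints de~$D''$. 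On est ainsi ramené au cas où deux boules disjointes sont contenues dans la troisième ; une translation par un centre de $h(D')$ donne alors exactement la forme normale $g(S')=S(r')$, $g(S)=S(a,r)$, $g(S'')=S(r'')$ avec $\max(r,r')<|a|<r''$. Votre normalisation alternative avec $S^*=S_\can$ suffit certes pour les relations d'additivité, mais la forme normale du lemme est bel et bien atteignable dans tous les cas de tripode.
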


Dans le premier cas, l'ensemble des coupures $g^{-1}(S(s))$ o\`u~$s$
d\'ecrit l'intervalle $[ r, r'']$ est un segment g\'eod\'esique 
qui relie~$S$ \`a~$S''$ (si $r< r' < r''$ ). 
Dans le second cas, il y a trois segments g\'eod\'esiques 
qui relient respectivement 
$S$ \`a~$S'$, $S'$ \`a~$S''$ et $S''$ \`a~$S$ ; leur intersection 
commune est le point $S^*$ (voir les figures~\ref{fig:arbre:1}
et~\ref{fig:arbre:2}).

%%%
\subsection{Branches}\label{par:branches}
%%%

Soit $S$ une coupure.
Les composantes connexes de $\HpR\setminus\{S\}$
seront appel\'ees {\emph{branches}}
issues de~$S$. Nous allons voir que l'ensemble des branches
est en bijection naturelle avec les \'el\'ements de la partition~$S$.
% \footnote{Certains auteurs les appellent \og bouts\fg.} 

\begin{rem} On peut ainsi faire le parall\`ele avec la construction de Dedekind
des nombres r\'eels  selon les coupures.
Partant du corps des nombres rationnels~$\Q$, la coupure~$S(\xi)$ associ\'ee \`a un nombre r\'eel~$\xi$ est
la trace sur~$\Q$ des composantes connexes
du compl\'ementaire de~$\{\xi\}$ dans~$\R$. On peut caract\'eriser \emph{a priori} ces coupures, d'o\`u une construction des nombres r\'eels.
\end{rem}

Commen\c{c}ons par le cas d'une coupure irrationnelle.
Soit~$r$ un nombre r\'eel strictement 
positif pour lequel $\log_p(r)$ est irrationnel. 
Soit~$S$ la coupure $S(r)$ et $\Bf (r)$ la boule (irrationnelle) 
associ\'ee \`a~$S$.  
L'ensemble $\Hp \setminus \{S\}$ comporte deux composantes connexes : 
l'une est form\'ee des coupures
associ\'ees aux boules  (ferm\'ees ou irrationnelles) 
strictement contenues dans $\Bf (r)$, 
l'autre est alors
form\'ee des coupures associ\'ees aux boules qui ne
sont pas contenues dans~$\Bf  (r)$.  

Soit $S=S_\can$ la coupure canonique. 
Rappelons que~$S$ est la partition de $\P^1(\C_p)$ en les boules ouvertes $B(\alpha)$, pour $\alpha$ d\'ecrivant $\P^1({\bar{\F_p}})$,
et que le stabilisateur de~$S$ dans $\Mob$ co\"{\i}ncide 
avec  $\PSL_2(\mathcal{O})$ (exercice~\ref{exo.stabilisateur}).
Pour tout $\alpha\in \bar{\F_p}$,
l'\'el\'ement~$B(\alpha)$ de~$S$ d\'efinit une branche issue de~$S$ : 
les \'el\'ements 
de cette branche sont les coupures associ\'ees aux boules contenues 
dans~$B(\alpha)$; la branche associ\'ee \`a~$\infty$ est
l'ensemble des coupures associ\'ees aux boules qui ne sont
pas contenues dans~$\Bf(0,1)$\footnote{Si une boule ferm\'ee est contenue dans $\Bf(0,1)$
mais ne lui est pas \'egale, elle est contenue dans l'une des boules $B(\alpha)$ 
pour $\alpha\in{\bar{\F_p}}$.}. 
L'ensemble des branches issues de la coupure canonique est ainsi en bijection 
naturelle avec $\P^1(\bar{\F_p})$
et cette bijection est compatible avec l'action du stabilisateur 
$\PSL_2(\mathcal{O})$ de~$S$.

Le cas des coupures rationnelles s'en d\'eduit par  transformation de M\"obius.

\medskip

Si~$S$ et $S'$ sont deux coupures distinctes, $S'$ appartient \`a une unique
branche issue de~$S$. De plus, quitte \`a \'echanger~$S$ et~$S'$,
la boule ferm\'ee ou irrationnelle $B'$ qui
d\'etermine $S'$ est contenue dans l'unique boule~$B$ de la partition 
$S$ qui d\'efinit cette branche ; le module de la couronne
$B\setminus B'$ est la distance de~$S$ \`a~$S'$. 

%%%
\subsection{Compl\'etion: bouts \`a distance finie  et points singuliers} 
%%%

Contrairement \`a ce qui se passe sur le corps des nombres
complexes, 
il existe des suites d\'ecroissantes de boules dans~$\C_p$
dont l'intersection est vide ; on dit que $\C_p$ n'est
pas \emph{sph\'eriquement complet}.
%Serge chgt ci-dessus pour enlever la paranth\`ese
Nous verrons d'ailleurs un exemple de telle suite au paragraphe
\ref{par:exempleRL}, en lien avec la dynamique des polyn\^omes.

Soit $(B_i)$ une telle suite de boules (ferm\'ees ou irrationnelles). 
La suite~$(r_i)$ des rayons des $B_i$ est  d\'ecroissante, donc
converge vers un nombre r\'eel~$r_\infty\geq 0$. Si l'on avait~$r_\infty=0$,
on d\'eduirait du th\'eor\`eme des ferm\'es embo\^{\i}t\'es que l'intersection des~$B_i$
est un singleton,  car $\C_p$ est complet. On a donc $r_\infty>0$.
Soit $(S_i)$ la suite des coupures d\'etermin\'ees par les~$B_i$. Alors
%Serge ci-dessus et dans les lignes qui suivent : r ---> r_\infty 
\begin{itemize}
\item  puisque les $B_i$ sont embo\^{\i}t\'ees, les $S_i$ sont
situ\'ees sur une demi-g\'eod\'esique de $\HpR$ (voir le lemme \ref{lemm:distances});
\item puisque les rayons $r_i$ ont une limite strictement positive $r_\infty$, la
longueur de cette demi-g\'eod\'esique est finie (major\'ee par $\log_p(r_0/r_\infty)$);
\item puisque l'intersection des $B_i$ est vide, cette demi-g\'eod\'esique sort de tout compact
de~$\HpR$ en un temps fini. 
\end{itemize}
D\'efinissons un {\emph{bout}} situ\'e {\emph{\`a distance finie}} comme la donn\'ee 
d'une demi-g\'eod\'esique $\gamma : [t_1,t_2[\to \HpR$ qui sort de tout compact
de $\HpR$ lorsque $t$ tend vers $t_2$, modulo l'\'equivalence qui identifie 
deux demi-g\'eod\'esiques $\gamma$ et $\gamma'$ d\'efinies sur $[t_1,t_2[$ et $[t_1',t_2'[$ 
respectivement si $\gamma(t_2-s)=\gamma'(t_2'-s)$ pour $s$ suffisamment petit.  
Avec cette d\'efinition, l'extr\'emit\'e finale de la demi-g\'eod\'esique contenant les 
$S_i$ d\'etermine un {\emph{bout}} de l'arbre~$\HpR$ situ\'e \`a distance finie de tout point de 
$\HpR$. 
Nous noterons $\Hpsing$ l'ensemble des bouts ainsi construits
et poserons 
\[
\Hp=\HpR\cup\Hpsing.
\] 
Par d\'efinition $\Hpsing$ et $\HpR$ sont deux parties disjointes de $\Hp$.
La distance d\'efinie sur $\HpR$ s'\'etend \`a $\Hp$ : si~$z$ est un point de $\Hpsing$ 
et $x$ est un point de $\HpR$, la distance entre~$x$ et~$z$ est la longueur de
l'unique demi-g\'eod\'esique partant de $x$ et d'extr\'emit\'e finale~$z$. 

On d\'emontre que $\Hp$ est un arbre r\'eel complet ; autrement dit, les seuls points
\`a ajouter \`a $\HpR$ pour le rendre complet sont les points de  $\Hpsing$ construits
ci-dessus. Si $S\in\Hp^\sing$, son compl\'ementaire $\Hp\setminus\{S\}$
est connexe ; autrement dit, le compl\'ementaire de chaque 
point singulier est constitu\'e d'une unique branche.

\subsection{Compl\'etion: sph\`ere \`a l'infini et d\'efinition de $\BP(\C_p)$}\label{par:sphereinfinie}

La {\emph{sph\`ere \`a l'infini}} de l'arbre $\HpR$ est l'ensemble
des demi-g\'eod\'esiques \emph{infinies} modulo la relation d'\'equivalence
pour laquelle $\gamma\simeq \gamma'$ si $\gamma$ et $\gamma'$ co\"{\i}ncident
sur des intervalles $[t_1,\infty\mathclose[$ et $[t_1',\infty\mathclose[$. Les
points de la sph\`ere \`a l'infini sont les  {\emph{bouts}} 
(ou {\og extr\'emit\'es\fg}) situ\'es {\emph{\`a distance infinie}}.

Pour tout point~$a$ de $\C_p$, la suite
des boules $S(a,r)$ pour~$r$ tendant vers~$0$ forme un arc
g\'eod\'esique infini ; on identifie son extr\'emit\'e au point~$a$.
R\'eciproquement, si $\gamma$ est une demi-g\'eod\'esique infinie,
chaque point $\gamma(t)$ est une coupure d\'etermin\'ee par une 
boule ferm\'ee ou irrationnelle $B_t$ de rayon $r_t$. D'apr\`es le lemme
\ref{lemm:distances} les boules $B_t$ sont embo\^{\i}t\'ees. La longueur
totale de $\gamma$ \'etant infinie, $r_t$ tend vers $0$ ou $\infty$.
Supposons que $r_t$ tende vers $0$ ; on obtient ainsi une suite de boules embo\^{\i}t\'ees dont 
l'intersection est un point $a$ de $\C_p$. La demi-g\'eod\'esique
$S(a,r)$ est alors dans la m\^eme classe d'\'equivalence que $\gamma$. 
Si $r_t$ tend vers l'infini, il existe un instant $t_1$ \`a partir duquel les boules
$B_t$ contiennent l'origine $0$; pour $t$ sup\'erieur \`a $t_1$, l'origine peut
\^etre choisie comme centre des boules $B_t$, si bien que la demi-g\'eod\'esique
$\gamma(t)$ est \'equivalente \`a la demi-g\'eod\'esique $S(t)$. L'extr\'emit\'e associ\'ee
est le point \`a l'infini $\infty$.
De la sorte, la sph\`ere \`a l'infini de~$\HpR$ s'identifie
\`a~$\P^1(\C_p)$.
%Serge J'ai essay\'e de pr\'eciser ci-dessus; c'est en fait le m\^eme argument que ce qui est admis au paragraphe pr\'ec\'edent quand on dit que Hpsing suffit \`a la compl\'etion; peut \^etre faut-il \^etre plus pr\'ecis, mais cela me semble superflu pour un texte de synth\`ese.

Lorsqu'on observe l'infini depuis le point $S_\can$, 
la distance $\dist$ induit une distance sur la sph\`ere \`a l'infini $\P^1(\C_p)$,
que l'on d\'efinit de la mani\`ere suivante. Soit~$z$ et $z'$ deux points
distincts de $\P^1(\C_p)$. Il existe un unique point $\langle z, z'\rangle$ de $\P^1(\C_p)$
pour lequel les demi-g\'eod\'esiques $[S_\can,z\mathclose[$ et $[S_\can, z'\mathclose[$ s'intersectent
exactement le long du segment $[S_\can, \langle z, z'\rangle]$. 
La distance visuelle $\delta(z,z')$
entre~$z$ et $z'$ est alors d\'efinie par 
\[
\delta(z,z')= p^{-\dist(S_\can, \langle z, z'\rangle)}.
\]
Cette distance co\"{\i}ncide avec la 
\emph{distance sph\'erique}\footnote{cette distance est appel\'ee {\og chordal distance \fg}   en anglais, 
qu'on traduit ici par  {\og distance sph\'erique\fg}; certains auteurs parlent 
de distance harmonique.} 
sur $\P^1(\C_p)$ (voir~\cite{Morton-Silverman:Crelle}, p. 103), d\'efinie par
\[ \delta(z,z')= \frac{\abs{z-z'}}{\max(1, \abs z) \max(1, \abs{z'})}=\begin{cases}
\abs{z-z'} & \text{si $\abs z$ et $\abs{z'}\leq 1$,} \\
\abs{\frac 1z-\frac 1{z'}} & \text{si $\abs z>1$ et $\abs{z'}>1$,} \\
1 & \text{sinon.}\end{cases}\]
Cette distance est invariante par le stabilisateur $\PSL_2(\mathscr O)$
de la coupure canonique~$S_\can$ dans~$\Mob$.

\medskip

Posons 
\[
\BP(\C_p)=\P^1(\C_p)\cup\Hp\; ;
\]
c'est encore un arbre r\'eel
mais pour la m\'etrique~$L$ telle que 
\[
\left.\frac{\mathrm dL}{\mathrm d\ell}\right|_{S}=p^{-\dist(S_\can,S)},  
\]
o\`u~$\ell$ est la longueur d'arc
de l'espace hyperbolique~$\Hp$.\footnote{Comme $x\mapsto p^{-x}$ est d'int\'egrale $1/\log(p)$ sur
$\R_+$, ce  changement  de distance ram\`ene 
\`a distance finie les points de la sph\`ere \`a l'infini.}
%% acl - legere modif de la note
Autrement dit, le long d'un arc g\'eod\'esique $s\mapsto \gamma(s)$, $s\in\R^+$, 
partant de $\gamma(0)= S_\can$, le rapport entre $L(\gamma(t), \gamma(t'))$ et $\dist(\gamma(t),\gamma(t'))$ est \'egal \`a l'int\'egrale de $p^{-u}du$ entre $t$ et $t'$. 

L'arbre $\BP(\C_p)$ est ainsi la r\'eunion de quatre parties disjointes,
\[
\BP(\C_p)=\P^1(\C_p)\cup \Hpsing\cup\Hp^{\R\setminus \Q}\cup\HpQ,
\]
qui satisfont les propri\'et\'es  suivantes:
\begin{itemize}
\item les \'el\'ements de $\P^1(\C_p)\cup\Hpsing$ sont les bouts de l'arbre~$\HpR$: 
de chaque point de $\P^1(\C_p)\cup \Hpsing$ ne part qu'une branche; 
\item d'un point de $\Hp^{\R\setminus \Q}$ partent exactement deux branches;
\item l'ensemble des branches issues d'un point de $\HpQ$ est
un espace homog\`ene sous un conjugu\'e de $\PGL_2(\mathscr O)$,
qui est isomorphe \`a l'espace homog\`ene~$\P^1(\overline{\F_p})$
muni de l'action de~$\PGL_2(\mathscr O)$.
%% acl - ajout
\end{itemize}

\subsection{Autres d\'efinitions}

\subsubsection{} 
On peut donner une d\'efinition plus directe de~$\BP(\C_p)$ \`a partir 
de l'ensemble des boules circonf\'erenci\'ees
de~$\C_p$ de rayon strictement positif.
Cet ensemble s'identifie \`a l'espace quotient
du produit $\C_p\times \R$ 
par la relation  d'\'equivalence
\[
(z,t)\sim (z',t') \quad \Leftrightarrow \quad  t= t' \, {\text{ et }}\, \log_p \abs{z-z'}\leq t,
\]
la boule~$\Bf(z,p^t)$ \'etant identifi\'ee \`a la classe du point~$(z,t)$.
La valeur absolue~$p$-adique \'etant ultram\'etrique, ceci d\'efinit une relation d'\'equivalence.
L'espace quotient $(\C_p\times \R)/\mathord\sim$ est en bijection avec $\HpR$ : pour cela il 
suffit d'associer au point~$(z,t)$ la coupure $S(z,p^t)$ associ\'ee \`a la boule $\Bf (z,p^t)$.
%Serge t ---> p^t dans la derni\`ere boule

La distance usuelle sur~$\R$ induit une distance sur l'espace quotient $(\C_p\times \R)/\mathord\sim$ ; si $(z,t)$ et $(z',t')$ appartiennent \`a~$\C_p \times \R$, la distance entre les
points qu'ils d\'eterminent dans $(\C_p\times \R)/\mathord\sim$ est
\[
2 \max(t, t', \log_p\abs{z-z'}) - t - t'. 
\]
Si l'on munit $(\C_p\times \R)/\mathord\sim$ de cette distance, la bijection $(z,t) \mapsto S(z,p^t)$
devient une isom\'etrie entre $(\C_p\times \R)/\mathord\sim$ et $\HpR$. Pour retrouver 
$\BP(\C_p)$, on ajoute alors les bouts de l'arbre $\HpR$ (c'est-\`a-dire $\Hpsing$
et $\P^1(\C_p)$).

\subsubsection{} Une troisi\`eme d\'efinition de $\BP(\C_p)$ 
peut \^etre obtenue en consid\'erant l'espace analytique associ\'e par
V.~\textsc{Berkovich} dans~\cite{berkovich1990}
\`a la droite projective sur~$\C_p$ 
(voir aussi l'annexe de~\cite{Rivera-Letelier:compositio} 
et~\cite{Ducros:Bourbaki}). 
Dans le cadre qui nous occupe, l'int\'er\^et majeur de la d\'efinition de $\BP(\C_p)$
par Berkovich r\'eside dans le fait que toute fraction rationnelle 
op\`ere naturellement
sur $\BP(\C_p)$, ce qui para\^{\i}tra  un peu miraculeux du point
de vue des coupures.

\medskip

L'espace analytique associ\'e \`a~$\P^1(\C_p)$ au sens de Berkovich est d\'efini
comme suit.
Une \emph{semi-norme multiplicative} sur l'anneau $\C_p[z]$ 
est une application $s:\C_p[z]\to \R_+$ satisfaisant 
\begin{enumerate}\def\theenumi{\roman{enumi}}\def\labelenumi{(\theenumi)}
\item $s(1)=1$, 
\item $s(P+Q) \leq s(P) + s(Q)$ et
\item $s(PQ)= s(P) s(Q)$
\end{enumerate}
pour tout couple $(P,Q)$ d'\'el\'ements de $\C_p[z]$.
Par exemple, \`a tout point~$z$ de la droite affine $\C_p$ est associ\'ee une semi-norme 
$s_z$ d\'efinie par $s_z(P)=\abs{P(z)}$.

Notons $\AB$ l'ensemble des semi-normes multiplicatives de $\C_p[z]$ 
dont la restriction \`a~$\C_p$ co\"{\i}ncide avec la valeur absolue~$p$-adique
(la notation de Berkovich est $\mathscr M(\C_p[z])$). 
Du point de vue de Berkovich, $\AB$ est la droite affine sur~$\C_p$.
Pour obtenir la droite projective, il suffit de lui adjoindre
un point \`a l'infini~$s_\infty$ qu'on peut interpr\'eter comme la
fonction d\'efinie sur $\C_p[z]$ par $s_\infty(P)=\infty$ lorsque~$P$ n'est pas constant 
et $s_\infty(P)=\abs a$ si~$P$ est la constante~$a$
(en d'autres termes, $s_\infty(P)$ est la valeur absolue de $P(\infty)$).  
On pose alors $\P^1_\Ber(\C_p)=\AB \cup \{S_\infty \}$. 

Le caract\`ere ultram\'etrique du corps~$\C_p$ se r\'ev\`ele dans le lemme suivant
qui montre que l'espace $\P^1_\Ber(\C_p)$ poss\`ede, outre $\P^1(\C_p)$,
de nombreux autres points.
\begin{lemm}\label{lemm.gauss}
Soit~$B$ une boule circonf\'erenci\'ee de la droite affine. Alors,
la fonction $s_B\colon\C_p[z]\ra\R_+$ d\'efinie par
\[
s_B(P) = \sup_{z\in B}\abs{P(z)}
\]
est une semi-norme (en fait, une norme) multiplicative sur l'anneau~$\C_p[z]$.
% Antoine: sur l'anneau...
\end{lemm}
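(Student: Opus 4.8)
The plan is to deduce the lemma from an exact computation of $s_B(P)$ in terms of the coefficients of~$P$: if $B=\Bf(0,r)$ and $P=\sum_k a_k z^k$, then $s_B(P)=\max_k\abs{a_k}r^k$, the right-hand side being the \emph{Gauss norm} which I shall denote $\norm{P}_r$. Granting this identity, the fact that $s_B$ is a multiplicative norm on $\C_p[z]$ follows from a purely formal computation on coefficients (done below), so the heart of the matter is the formula $s_B(P)=\norm{P}_r$ itself.

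First I would reduce to the case where $B$ is centred at the origin: if $B=\Bf(a,r)$, the substitution $z\mapsto z+a$ is an automorphism of the $\C_p$-algebra $\C_p[z]$ fixing the constants, and it transports $s_{\Bf(a,r)}$ onto $s_{\Bf(0,r)}$; so one may assume $B=\Bf(0,r)$. Now for the Gauss formula. The inequality $s_B(P)\le\norm{P}_r$ is immediate: if $\abs z\le r$ then $\abs{P(z)}\le\max_k\abs{a_k}\abs z^k\le\max_k\abs{a_k}r^k$ by the ultrametric inequality. For the reverse inequality I would treat first the case $r\in p^\Q$. Assuming $P\ne 0$, the number $M:=\norm{P}_r$ is a finite maximum of elements of $p^\Q$, hence lies in $p^\Q$, so one can choose $\xi,\lambda\in\C_p^{*}$ with $\abs\xi=r$ and $\abs\lambda=M$. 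Put $c_k=a_k\xi^k/\lambda$; these elements lie in $\mathscr O$ and are not all in $\mathfrak m$, so the reduced polynomial $\sum_k\bar c_k u^k\in\bar{\F_p}[u]$ is non-zero. Since $\bar{\F_p}$ is infinite while a non-zero polynomial has only finitely many roots, there is $\bar u\in\bar{\F_p}$ with $\sum_k\bar c_k\bar u^k\ne 0$; lifting $\bar u$ to $u\in\mathscr O$ and setting $z=\xi u$, one has $z\in\Bf(0,r)$ and $P(z)=\lambda\sum_k c_k u^k$ with $\abs{\sum_k c_k u^k}=1$, whence $\abs{P(z)}=\abs\lambda=M$; so in this case the supremum is even attained. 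For irrational~$r$ I would pass to the limit: $\Bf(0,r)$ contains $\Bf(0,r')$ for every $r'<r$, so $s_B(P)\ge\max_k\abs{a_k}(r')^k$ by the rational case, and letting $r'\to r$ through $p^\Q$ the finite maximum on the right tends to $\norm{P}_r$.

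It then remains to check that $\norm{\cdot}_r$ is a multiplicative norm on $\C_p[z]$. The relations $\norm 1_r=1$ and $\norm{P+Q}_r\le\max(\norm P_r,\norm Q_r)\le\norm P_r+\norm Q_r$ are clear, and $\norm P_r=0$ forces every coefficient of~$P$ to vanish, so $\norm{\cdot}_r$ is a norm. As for multiplicativity, $\norm{PQ}_r\le\norm P_r\norm Q_r$ is again the ultrametric inequality; for the reverse, write $P=\sum_i a_i z^i$, $Q=\sum_j b_j z^j$, and let $i_0$ (\resp $j_0$) be the largest index with $\abs{a_{i_0}}r^{i_0}=\norm P_r$ (\resp $\abs{b_{j_0}}r^{j_0}=\norm Q_r$). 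In the coefficient $\sum_{i+j=i_0+j_0}a_ib_j$ of $z^{i_0+j_0}$ in $PQ$, the term $a_{i_0}b_{j_0}$ satisfies $\abs{a_{i_0}b_{j_0}}r^{i_0+j_0}=\norm P_r\norm Q_r$, while every other term is strictly smaller (if $i>i_0$ then $\abs{a_i}r^i<\norm P_r$; if $i<i_0$ then $j>j_0$ and $\abs{b_j}r^j<\norm Q_r$); hence the equality case of the ultrametric inequality gives $\abs{\sum_{i+j=i_0+j_0}a_ib_j}\,r^{i_0+j_0}=\norm P_r\norm Q_r$, so $\norm{PQ}_r\ge\norm P_r\norm Q_r$.

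The main obstacle is the lower bound $s_B(P)\ge\norm P_r$ in the Gauss formula, i.e.\ showing that the formal maximum of the $\abs{a_k}r^k$ is actually realised (or approached) by evaluating $P$ at points of $B$: this is the only place where the arithmetic of $\C_p$ enters, through the fact that the residue field $\bar{\F_p}$ is infinite, so that a non-zero polynomial over $\bar{\F_p}$ cannot vanish identically on the unit sphere of $\C_p$. The reduction to $a=0$, the upper bound, and the multiplicativity of the Gauss norm are all formal.
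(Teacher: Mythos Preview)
Your proof is correct and follows essentially the same approach as the paper: reduce to a ball centred at the origin, establish the Gauss formula $s_B(P)=\max_k\abs{a_k}r^k$ via reduction modulo~$\mathfrak m$ and the fact that the residue field~$\bar{\F_p}$ is infinite, then treat irrational radii by a limiting argument. The only minor difference is in the multiplicativity step: the paper normalises to $\norm P=\norm Q=1$ and observes that $\overline{PQ}=\bar P\,\bar Q\neq 0$ because $\bar{\F_p}[z]$ is an integral domain, whereas you give the direct coefficient argument (the classical Gauss-lemma computation picking the largest indices~$i_0,j_0$); both are standard and equivalent.
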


Lorsque $B=\Bf(0,1)$, la norme~$s_B$ est appel\'ee \emph{norme de Gauss}.
D'apr\`es ce lemme, toute boule circonf\'erenci\'ee~$B$ de~$\C_p$
fournit un point de~$\P^1_\Ber(\C_p)$.
Lorsque $B= \Bf(a,r)$, la valeur de $s_B$ au polyn\^ome $z-b$
est \'egale \`a  
\[
s_B(z-b)=\min(r, \abs{b-a}).
\]
On voit ainsi que la donn\'ee de  la semi-norme~$s_B$ redonne~$B$.
D'apr\`es le dictionnaire
entre coupures et boules ferm\'ees ou irrationnelles,
cela fournit une injection de $\HpR$ dans~$\P^1_\Ber(\C_p)$.

Plus g\'en\'eralement, 
pour toute suite d\'ecroissante $\mathscr B=(B_i)$ de boules circonf\'erenci\'ees,
la limite d\'ecroissante $s_{\mathscr B}=\lim s_{B_i}$ est encore
une semi-norme multiplicative sur~$\C_p[z]$.
L'intersection des boules de~$\mathscr B$ peut \^etre
un point~$z$ de~$\P^1(\C_p)$, auquel cas $s_{\mathscr B}=s_z$;
elle peut \^etre une boule~$B$,  n\'ecessairement 
ferm\'ee ou irrationnelle (auquel cas $s_{\mathscr B}=s_B$)
et les semi-normes obtenues correspondant aux coupures rationnelles
et irrationnelles de~$\BP(\C_p)$;
elle peut aussi \^etre vide, dans ce cas la norme~$s_{\mathscr B}$
correspond \`a un point singulier de~$\BP(\C_p)$.

Inversement, on d\'emontre que $\P^1_\Ber(\C_p)$ n'a pas d'autre
point que ceux-ci, ce qui
permet d'identifier l'espace $\P^1_\Ber(\C_p)$ 
avec l'espace $\BP(\C_p)$ d\'efini dans les paragraphes pr\'ec\'edents. 

D\'emontrons maintenant le lemme~\ref{lemm.gauss}.
\begin{proof}
Supposons que le rayon de la boule $B$ appartient 
\`a~$p^{\Q}$; quitte \`a faire un changement de variable affine,
on suppose m\^eme $B=\Bf(0,1)$. 
Pour $P=a_nX^n+\dots+a_0$, posons $\norm P= \max(\abs{a_0},\dots,\abs{a_n})$
et d\'emontrons que l'on a $s_B(P)=\norm P$ pour tout $P\in\C_p[z]$.
Tout d'abord, on observe que pour tout $z\in\Bf(0,1)$,
\[ \abs{P(z)}\leq \max(\abs{a_n}\abs{z}^n) \leq \norm P.\]
Inversement, d\'emontrons qu'il existe $z\in\Bf(0,1)$ tel que
$\abs{P(z)}= \norm P$. On peut supposer que $P\neq 0$.
Quitte \`a diviser~$P$ par un de ses coefficients
de valeur absolue maximale, on a $\norm P=1$ ; on peut ainsi
supposer que les $a_i$
appartiennent \`a~$\mathscr O$ et que l'un d'entre eux au moins
n'est pas dans l'id\'eal maximal~$\mathfrak m$, de sorte
que le polyn\^ome r\'eduit~$\overline P$ n'est pas nul.\label{page.gauss}
Soit $\alpha$
un \'el\'ement de~$\overline{\F_p}$ qui n'est pas une racine
de~$\overline P$;
pour tout $z\in B(\alpha)$, on a $P(z)\in\mathscr O$
et $\overline{P(z)}=\overline P(\alpha)\neq 0$,
autrement dit $\abs{P(z)}=1$. 

Nous pouvons maintenant terminer la d\'emonstration du lemme
(dans le cas o\`u $B=\Bf(0,1)$).
Il est clair que $s_B$ est une norme sur~$\C_p[z]$,
le point non \'evident est sa multiplicativit\'e.
Soient $P$ et $Q$ des polyn\^omes de~$\C_p[z]$ ; par
homog\'en\'eit\'e, pour
d\'emontrer que $s_B(PQ)=s_B(P)s_B(Q)$, on peut supposer
que $P$ et $Q$ v\'erifient $\norm P=\norm Q=1$ et il s'agit
de d\'emontrer que $\norm{PQ}=1$. L'in\'egalit\'e ultram\'etrique
entra\^{\i}ne imm\'ediatement que $\norm{PQ}\leq 1$ et il s'agit maintenant
de prouver l'\'egalit\'e. Les polyn\^omes $\overline P$
et $\overline Q$ \`a coefficients dans~$\overline{\F_p}$
sont non nuls, et il en est alors de m\^eme de leur produit.
Or, ce produit $\overline P\overline Q$ n'est autre
que la r\'eduction modulo~$\mathfrak m$ du polyn\^ome $PQ$. On
a donc $\overline{PQ}\neq 0$, ce qui entra\^{\i}ne $\norm{PQ}=1$,
d'o\`u le lemme, lorsque le rayon de $B$ appartient \`a~$p^\Q$. 

Le cas g\'en\'eral s'en d\'eduit par passage \`a la limite. \end{proof}

La structure d'arbre r\'eel de $\BP(\C_p)$, 
qui dans les deux d\'efinitions pr\'ec\'edentes r\'esulte des 
propri\'et\'es d'inclusions des boules, 
se traduit ici dans la relation d'ordre partiel $\leq$ entre semi-normes : 
$s\leq s'$ si $s(P)\leq s'(P)$ pour tout polyn\^ome~$P$. Par exemple,
l'arc topologique joignant deux points $s\leq s'$ est constitu\'e des semi-normes $s''$ telles que $s\leq s''\leq s'$, et les \'el\'ements minimaux co\"{\i}ncident avec les
bouts de l'arbre qui sont distincts de $\infty$.

Soit~$s$ un \'el\'ement de $\P^1_\Ber(\C_p)\setminus \P^1(\C_p)$. La semi-norme
$s$ ne s'annule donc sur aucun polyn\^ome non nul, ce qui permet de l'\'etendre en 
une valeur absolue sur le corps $\C_p(z)$. Si~$R$ est une fraction rationnelle, 
on peut alors d\'efinir une nouvelle norme
\[
R_*s : Q\mapsto s(Q\circ R).
\]
Ceci d\'efinit une action naturelle de~$R$ sur $\BP(\C_p)\setminus \P^1(\C_p)$. Nous 
d\'efinirons \`a nouveau cette action en termes de coupures et de boules dans la 
partie suivante de ce texte.

Nous renvoyons le lecteur \`a \cite{Favre-Rivera-Letelier:2008} 
et~\cite{Ducros:Bourbaki} pour plus de d\'etails et des r\'ef\'erences 
concernant l'utilisation du point de vue de Berkovich en dynamique~$p$-adique.

\subsubsection{} 
Il existe bien  d'autres fa\c{c}ons d'aborder cet espace $\BP(\C_p)$,
pour lesquelles nous renvoyons aux articles~\cite{Baker:2008} et~\cite{Rivera-Letelier:CMH}
et aux r\'ef\'erences qui s'y trouvent. L'espace $\BP(\C_p)$ est notamment reli\'e \`a l'arbre
de Bruhat--Tits de~$\SL_2(\C_p)$.

%Soit $a\in\P^1(\C_p)$ et soit~$r$ un nombre r\'eel tel que $0\leq r\leq 1$;
%notons~$B$ la boule~$\{\delta(a,z)\leq r\}$ pour la distance
%chordale ; de m\^eme, soit $B'=\{\delta(a',z)\leq r'\}$ une boule
%de centre~$a'$ et de rayon~$r'$.
%Soit $B''$ la plus petite boule qui contient~$B$ et~$B''$
%et soit $r''$ son rayon. Alors, $\delta(B,B')=r''-\frac12(r+r')$.
%Si $B\subset B'$, on a donc $\delta(B,B')=\frac12(r'-r)$;
%sinon, on a $\delta(B,B')=\delta(B,B'')+\delta(B'',B')$.

%La compl\'etion  de $\P^1(\C_p)$
%correspond pr\'ecis\'ement \`a~$\BP(\C_p)$. (EST CE BIEN CA ????)
%N'EST CE PAS EN CONTRADICTION AVEC LE FAIT QUE R EST 
%1-LIPSCHITZ LORSQUE R A BONNE REDUCTION ????????

%%
\subsection{Topologie faible sur $\BP(\C_p)$}\label{par:affinoide}

Une partie~$A$ de~$\P^1(\C_p)$ est appel\'ee \emph{affino\"{\i}de ouvert}
si c'est une intersection finie non vide de boules ouvertes (les boules
peuvent contenir l'infini).\footnote{%
Nous suivons ici la terminologie de Rivera-Letelier, Yoccoz, etc.,
encore qu'il faudrait ajouter le qualificatif \emph{connexe}.
En g\'eom\'etrie analytique rigide, un ensemble affino\"{\i}de
connexe de~$\P^1(\C_p)$ est une intersection finie 
non vide de disques ferm\'es, \emph{cf.}~\cite{fresnel-vanderput2004}.
Dans la th\'eorie de Berkovich~\cite{berkovich1990}, 
les espaces analytiques sont localement
connexes par arcs, localement compacts et les domaines affino\"\i des 
en sont des parties compactes; les affino\"{\i}des ouverts que
nous venons d'introduire sont la trace sur~$\P^1(\C_p)$ 
de certaines parties ouvertes et connexes de~$\BP(\C_p)$.}

\begin{figure}[htbp]
% \centering\epsfig{figure=affinoide.pstex}
\centering\includegraphics{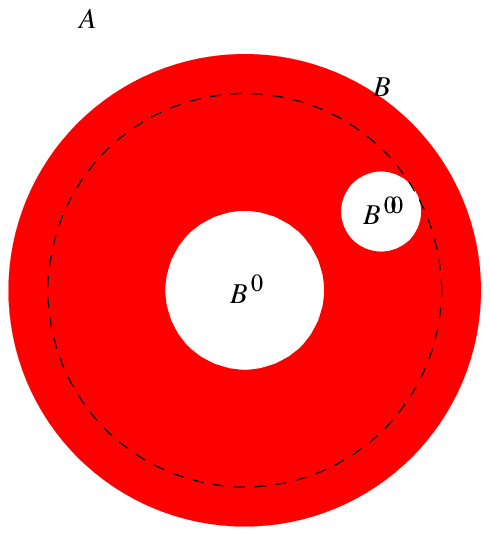}
\includegraphics{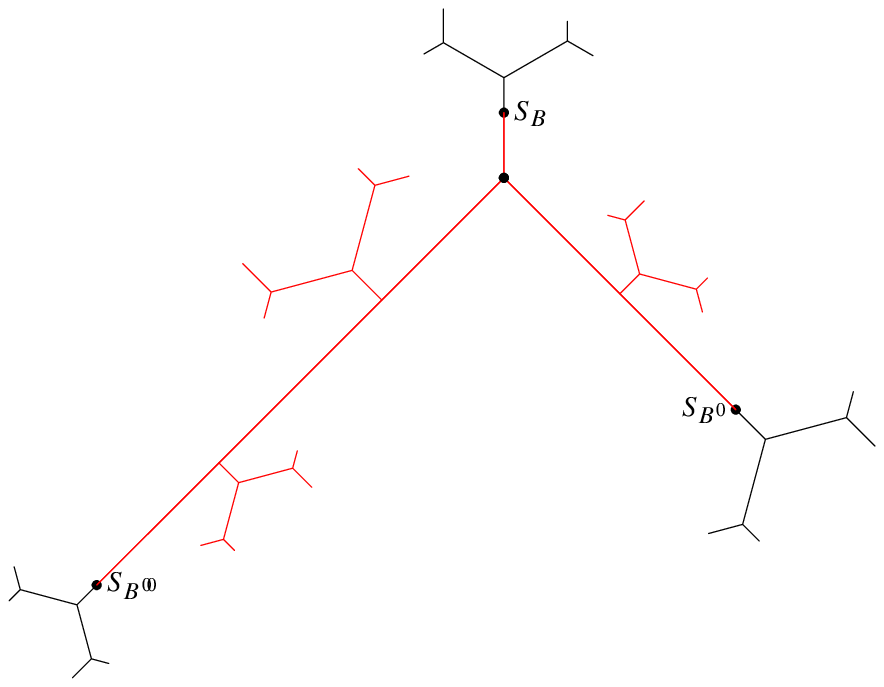}
\caption{\textsc{Affino\"{\i}des ouverts}: 
Ici, $A$ est l'affino\"{\i}de obtenu en retranchant les
boules ferm\'ees $B'$ et $B''$ \`a la boule ouverte~$B$. 
Les coupures~$S_B$, $S_{B'}$ et
$S_{B''}$ sont d\'etermin\'ees par les boules correspondantes.} 
\label{fig:affinoide}
\end{figure}

Soit~$A$ un affino\"{\i}de ouvert. 
On dit qu'une coupure~$S$ \emph{s\'epare}~$A$ si $A$ rencontre
au moins deux boules de~$S$. Par exemple, si $A$ est une boule
ouverte et si $S$ est la coupure d\'etermin\'ee par la boule $\Bf (a,r)$,
alors $S$ s\'epare~$A$ si et seulement si $\Bf (a,r)$ est contenue dans~$A$. 

Notons $\widehat A$ la partie de~$\BP(\C_p)$
r\'eunion de~$A$, de l'ensemble des coupures qui s\'eparent~$A$,
et des points singuliers de~$\BP(\C_p)$ qui correspondent
\`a une intersection vide de boules contenues dans~$A$.
C'est une partie connexe, donc un arbre r\'eel.
\'Ecrivons~$A$ comme l'intersection $\bigcap B_i$ d'un nombre minimal 
de boules ouvertes ; cette \'ecriture est unique \`a permutation pr\`es des~$B_i$. 
Pour tout~$i$, soit $S_i$ l'unique coupure qui contient~$B_i$ ;
ces coupures forment les \emph{extr\'emit\'es} de $\widehat A$ 
(voir la figure~\ref{fig:affinoide}).

La \emph{topologie faible} sur~$\BP(\C_p)$ est d\'efinie
en prenant comme base d'ouverts l'ensemble des parties~$\widehat A$,
lorsque $A$ parcourt l'ensemble des affino\"{\i}des ouverts de~$\P^1(\C_p)$.
Cette topologie est m\'etrisable et fait de $\BP(\C_p)$ un espace
compact et contractile.
De plus, l'ensemble~$\P^1(\C_p)$ est dense pour la topologie
faible qui induit d'ailleurs la topologie $p$-adique sur cet ensemble.
Restreinte au compl\'ementaire du point \`a l'infini, la topologie
faible est aussi la plus grossi\`ere des topologies pour lesquelles
les applications $P\mapsto s(P)$ sont continues, pour tout~$s\in\AB$.

%Le lien avec la topologie d'arbre r\'eel est assez facile
%\`a d\'ecrire. Soit $S$ un point de~$\BP(\C_p)$ ; un voisinage
%de~$S$ pour la topologie faible est la r\'eunion de~$S$
%et, pour toute branche~$\mathscr B$ issue de~$S$,
%d'un voisinage~$V_{\mathscr B}$ de~$S$ dans l'arbre $\{S\}\cup\mathscr B$,
%ces voisinages \'etant tous \'egaux \`a~$\{S\}\cup\mathscr B$,
%sauf pour un nombre fini de branches~$\mathscr B$.

%%%%%%%%%%%%%%%%%%%%%%%%%
%%%%%%%%%%%%%%%%%%%%%%%%%%%%%%%%%%%%%%%

\section{Action des fractions rationnelles sur l'espace hyperbolique~$p$-adique}

%%%%%%%%%%%%%%%%%%%%%%%%%%%
%%%%%%%%%%%%%%%%%%%%%%%%%%%%%%%%%%%%%%%

Si~$R$ appartient \`a~$\C_p(z)$, l'action de~$R$ sur la droite projective
$\P^1(\C_p)$ s'\'etend \`a~$\BP(\C_p)$ ; c'est ce qu'il s'agit de d\'ecrire maintenant. 
Lorsque $R$ est constante, son extension  \`a  $\BP(\C_p)$ sera \'egalement constante. Nous supposerons donc syst\'ematiquement que $R$ n'est pas constante.

\subsection{Image des petites couronnes}

Soit $B$ une boule ouverte ou irrationnelle et soit $(B_i)$
une suite strictement croissante de boules ferm\'ees ou irrationnelles dont la 
r\'eunion est \'egale \`a~$B$.
Pour tout~$i$, soit $C_i$ la couronne $B\setminus B_i$ ;
son module tend vers~$0$ et l'intersection des~$C_i$ est vide.

\begin{lemm}[\cite{Rivera-Letelier:compositio}, prop.~4.1]
Soit $R$ une fraction rationnelle non constante.
Il existe un entier naturel~$d$, une boule~$B'$ et, 
pour tout entier~$i$ assez grand, une boule~$B'_i$ ferm\'ee
ou irrationnelle contenue dans~$B'$ tels que
\begin{itemize}
\item l'image~$R(C_i)$ de la couronne~$C_i$ est la couronne 
$B'\setminus B'_i$;
\item l'application $R:C_i\to R(C_i)$
induite par~$R$ est de degr\'e~$d$ (tout point de~$R(C_i)$ a
$d$ ant\'ec\'edents dans~$C_i$ par~$R$).
\end{itemize}
De plus, le module de la couronne $R(C_i)$ 
est \'egal \`a~$d$ fois celui de~$C_i$.
\end{lemm}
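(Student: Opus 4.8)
The statement is a local assertion near the "small end" of the crown $C_i = B \setminus B_i$, so the first move is to reduce to a normalized model using Möbius transformations on both sides. Since $(B_i)$ is a strictly increasing sequence of closed or irrational balls with union the open-or-irrational ball $B$, I would first apply an element of $\Mob$ to assume $B$ does not contain $\infty$ and is centered at $0$, say $B = \Bo(0,\rho)$ (or $B=\Bf(0,\rho)$ in the irrational case), and $B_i = \Bf(0,\rho_i)$ with $\rho_i \uparrow \rho$. The essential point is then to understand $R$ on the \emph{annular neighborhood of the boundary circle} $\{\abs z = \rho\}$: we are free to throw away finitely many indices $i$, so it suffices to understand $R$ on a single crown $\rho' < \abs z < \rho$ for some fixed $\rho'$ close to $\rho$, and then let $\rho_i$ sweep through $(\rho',\rho)$.

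The key structural input is Proposition~\ref{prop:boules} (together with Exercice~\ref{ex:boules}(3), which extends it to rational fractions): the image of a ball under a non-constant rational map is again a ball of the same nature, and preimages of balls are finite disjoint unions of balls, the multiplicity on each being well-defined. So I would argue as follows. For $s \in (\rho', \rho)$ consider the ball $\Bf(0,s)$; its image $R(\Bf(0,s))$ is a ball, and (using Exercice~\ref{ex:boules-et-ptfixes}(1) on the slight perturbation of the leading behaviour of $R$, or a direct valuation-polygon computation) one shows that for $s$ close enough to $\rho$ the data $R(\Bf(0,s))$ varies monotonically: either these image balls increase to a limiting open-or-irrational ball $B'$, or they all contain a fixed point of $R^{-1}$ and one passes to $1/R$. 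After composing with a Möbius map on the target I may assume $R(\Bf(0,s)) = \Bf(0,s^*)$ with $s \mapsto s^*$ strictly increasing, and $B' = \bigcup_s \Bf(0,s^*)$. Defining $B'_i := R(B_i)$, the crown $R(C_i) = R(B \setminus B_i)$ is then $B' \setminus B'_i$ — here one must check surjectivity of $R$ onto this crown and that no extra preimages creep in, which follows because $R$ is open and because preimages of $\Bf(0,s^*)$ are disjoint balls, only one of which meets the relevant region once $i$ is large (the other preimage components are bounded away from the boundary circle $\abs z = \rho$).

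The degree assertion is where the real content sits. By Proposition~\ref{prop:boules}(2) applied to $\Bf(0,s^*)$, its full preimage under $R$ is a disjoint union of balls with total multiplicity $d_{\mathrm{tot}} = \deg R$; let $d$ be the multiplicity of the component containing the circle $\abs z = \rho$. Since this component is the same for all $s$ close to $\rho$ (the components depend lower-semicontinuously and there are finitely many), $d$ is independent of $i$, and $R : C_i \to R(C_i)$ has degree $d$: every point of $R(C_i)$ has exactly $d$ preimages in $C_i$ counted with multiplicity, and in a crown the map is unramified there (ramification points are isolated and bounded away from $\abs z = \rho$ for $i$ large), so $d$ honest preimages. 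Finally, the modulus formula. Writing things multiplicatively, $\mathrm{mod}\, C_i = \log_p(\rho/\rho_i)$ and $\mathrm{mod}\, R(C_i) = \log_p(s^*\!\text{-value at }\rho \,/\, s^*\!\text{-value at }\rho_i)$; the claim $\mathrm{mod}\,R(C_i) = d \cdot \mathrm{mod}\,C_i$ amounts to the statement that on this crown $\abs{R(z)} = \text{const}\cdot \abs z^{\,d}$, i.e.\ the relevant Newton-polygon segment of $R$ (in the coordinate $z$, expanded near the circle) has slope exactly $d$. This is the same integer $d$ as the local degree, by the theory of the Newton polygon / the fact that the number of zeros of $R - w$ in $C_i$ (counted with multiplicity) equals the slope of that polygon segment — which is Proposition~\ref{prop:boules}(2) once more, now read quantitatively.

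**Main obstacle.** The delicate step is not any single computation but the \emph{stabilization} argument: that for $i$ large the component of $R^{-1}(R(\Bf(0,s)))$ carrying the boundary circle, its multiplicity $d$, and the slope of the governing Newton-polygon segment, are all independent of $i$ (equivalently, of $s \in (\rho',\rho)$ for $\rho'$ close enough to $\rho$). This requires knowing that the finitely many critical values and the finitely many "branch points" of the ball-image function $s \mapsto R(\Bf(0,s))$ accumulate only at finitely many radii, so that on a sufficiently small one-sided neighborhood of $\rho$ everything is combinatorially constant. Granting that, the degree-multiplicativity of the modulus is a clean consequence of Proposition~\ref{prop:boules} and its rational-function extension; the rest is Möbius normalization and bookkeeping.
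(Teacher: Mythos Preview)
Your proposal is correct in outline, and in fact the paper does not give a detailed proof here, merely noting that the lemma is ``une application assez simple de la th\'eorie du polygone de Newton'' with references. However, your route is more circuitous than the direct Laurent-series argument the paper intends (and carries out explicitly in the proof of Proposition~\ref{pro:indifferent} immediately afterward). After your M\"obius normalization so that $B=\Bo(0,\rho)$, one expands~$R$ as a Laurent series $R(z)=\sum_{n\in\Z}h_n z^n$ on a crown $r_1<\abs z<r_2$ containing the circle $\abs z=\rho$ (and avoiding the poles). The Newton polygon of this series has only finitely many segments, so on a sufficiently thin one-sided crown $\rho'<\abs z<\rho$ a single monomial $h_m z^m$ dominates: $\abs{R(z)}=\abs{h_m}\,\abs z^m$ throughout. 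After subtracting the constant term~$h_0$ and, if $m<0$, composing with $z\mapsto 1/z$ at the target, one has $m>0$ and reads off at once that $R(\Bf(0,s))=\Bf(0,\abs{h_m}s^m)$ for $\rho'<s<\rho$; this gives~$B'$, the balls~$B'_i$, the degree $d=m$, and the modulus relation in one stroke.

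The step you flag as the main obstacle --- stabilization of the preimage component and of the integer~$d$ as $i\to\infty$ --- dissolves in this approach: it is exactly the statement that near the radius~$\rho$ the Newton polygon has a single relevant segment, which is automatic from the polygon's finiteness. Framing the argument through Proposition~\ref{prop:boules} and tracking how preimage components vary with~$s$ is valid, but it reconstructs by hand what the Laurent expansion delivers immediately; in particular, your worry that ``other preimage components are bounded away from the boundary circle'' and that ramification points do not accumulate is precisely the content of the dominant-monomial statement.
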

C'est une application assez simple de la th\'eorie du polygone
de Newton (voir par exemple \cite{amice75,dwork-g-s94}
ainsi que \cite{Rivera-Letelier:CMH}, \S1.3.2). 
En outre, la boule~$B'$ ne dépend que de la boule~$B$ et
pas de la suite~$(B_i)$ choisie.

%Serge Ajout ci-dessous
Le module de la couronne $C_i$ est \'egal \`a la distance entre les
coupures d\'etermin\'ees par les boules (ferm\'ees ou irrationnelles) $B_i$.
La derni\`ere assertion du lemme correspond donc \`a une propri\'et\'e
de dilatation de la distance entre coupures ; nous allons pr\'eciser ce point
dans les paragraphes qui suivent.

%%%%%
\subsection{Action sur les coupures; action résiduelle sur les branches}
\label{subsec.action-coupures}
%%%%

Soit~$S$ une coupure ; soit~$B$ 
l'une des boules ouvertes ou irrationnelles constituant la partition~$S$.
Notons $\mathcal{B}$ la branche issue de~$S$ qui est d\'etermin\'ee par~$B$
(voir le \S \ref{par:branches}).
Choisissons une suite  croissante $(B_i)$
de boules ferm\'ees ou irrationnelles 
dont la r\'eunion est \'egale \`a~$B$. 
Pour tout~$i$, la boule~$B_i$ 
d\'etermine une coupure~$S_i$ qui appartient \`a~$\mathscr B$
et $S_i$ tend vers~$S$ quand $i\rightarrow\infty$.
Pour tout~$i$, posons $C_i=B\setminus B_i$. 

Puisque la suite de boules $B_i$ est croissante, la suite des coupures
$S_i$ se d\'eplace le long de l'arc g\'eod\'esique reliant $S_0$ \`a $S$ 
dans l'arbre $\Hp$.   

Pour $i$ assez grand, l'ensemble $R(C_i)$ est une couronne
d'apr\`es le lemme pr\'ec\'edent ; on peut donc l'\'ecrire
sous la forme $B'\setminus B'_i$, o\`u $B'$ et $B'_i$
sont des boules de m\^eme nature que~$B$ et $B_i$.
Ces boules d\'efinissent des coupures $S'$ et $S'_i$
telles que la suite $S'_i$ tende vers~$S'$, car le module de $R(C_i)$
tend vers~$0$.
De plus, les coupures~$S'_i$
appartiennent \`a une m\^eme branche~$\mathscr B'$ issue de~$S'$.

On d\'emontre que la coupure~$S'$ ne d\'epend que de la coupure~$S$;
on la  note $R_*(S)$ : c'est l'image de~$S$ par~$R$. 
De même, la branche~$\mathcal B'$ issue de~$S'$,
ainsi que l'entier~$d$ dont le lemme pr\'ec\'edent
affirme l'existence ne d\'ependent que de la branche~$\mathcal B$ mais
pas du choix de la suite de boules~$(B_i)$, on le
note $\deg_{\mathcal B}(R)$. 

Lorsque $S_i$ tend vers $S$ le long d'un segment g\'eod\'esique contenu 
dans la branche~$\mathcal B$, la distance entre $R_*(S_i)$ et $R_*(S)$ est 
\'egale \`a $\deg_{\mathcal B}(R) \dist(S_i, S)$. Autrement dit, pour chaque 
demi-g\'eod\'esique $\gamma:\R_+\to \Hp$ issue de $S=\gamma(0)$ et trac\'ee dans $\mathcal B$,
il existe $t>0$ tel que 
\[
\dist(R_*(\gamma(s)),R_*(\gamma(s'))) =\deg_{\mathcal B}(R) \dist(\gamma(s),\gamma(s'))=\deg_{\mathcal B}(R) \abs{s-s'}
\]
pour tout $(s,s')\in [0,t]^2$. La longueur $t$ sur laquelle cette dilatation a lieu d\'epend
du choix de la g\'eod\'esique $\gamma$ issue de $S$.

\subsubsection*{Cas des coupures irrationnelles}
La proposition suivante pr\'ecise ce que peut \^etre le degr\'e d'une fraction
rationnelle le long d'une  branche issue d'une coupure irrationnelle.

\begin{prop}\label{pro:indifferent} 
Soit $R$ un \'el\'ement non constant de~$\C_p(z)$. 
Si $S$ est une coupure irrationnelle, alors 
\begin{enumerate} 
\item $\deg_{\mathcal B} (R)$ ne d\'epend pas du choix de la branche ${\mathcal B}$ issue de~$S$ et sera not\'e $\deg_S (R)$ ;
\item  $\deg_{\mathcal B} (R)=1$ d\`es que $R_*(S)=S$. 
\end{enumerate}
\end{prop}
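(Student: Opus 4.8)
The idea is to exploit the fact that an irrational cut $S$ has exactly \emph{two} branches issuing from it (as noted at the end of \S\ref{par:sphereinfinie}), and that the two corresponding balls are complementary: if $S = \{B, B'\}$ with $B,B'$ irrational balls and $\P^1(\C_p) = B \cup B'$, then the two branches are determined by $B$ and by $B'$. Normalize via an element of $\Mob$ so that $S = S(r)$ with $\log_p(r)$ irrational, $B = \Bf(r)$ (not containing $\infty$) and $B' = \{\abs z > r\}$ (containing $\infty$). Choose a strictly increasing sequence $(B_i)$ of balls exhausting $B$, so that the cuts $S_i = S(r_i)$ with $r_i \uparrow r$ march toward $S$ along the branch $\mathcal B$ determined by $B$; symmetrically choose $\Bf(\rho_j)$ with $\rho_j \downarrow r$ exhausting (complementarily) to give the other branch $\mathcal B'$. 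The crucial point is that the local picture of $R$ near the cut $S$ is controlled by a single Newton-polygon computation — the slope of the Newton polygon of $R$ (suitably centered) at the radius $\log_p(r)$ — and this slope does not see on which side of $r$ we approach.

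For part (1), I would argue as follows. By the lemma of \S\ref{subsec.action-coupures}, for $i$ large $R(C_i) = B' \setminus B'_i$ is a crown whose module is $\deg_{\mathcal B}(R)$ times that of $C_i = B \setminus B_i$; the module of $C_i$ equals $\dist(S_i, S)$. Hence along the geodesic toward $S$ inside $\mathcal B$, the map $R_*$ multiplies distances by the integer $\deg_{\mathcal B}(R)$, and this integer is the local expansion factor of $R_*$ at $S$ read from the $\mathcal B$ side. Now do the same on the $\mathcal B'$ side to get $\deg_{\mathcal B'}(R)$. I claim both equal the left and right slopes of the piecewise-affine function $t \mapsto \dist(S_\can, R_*(S(p^t)))$ (after normalizing coordinates), evaluated at $t = \log_p(r)$. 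Concretely, write $R = P/Q$ in the affine coordinate adapted to the normalization; the behaviour of $\abs{R}$ on the crown $\{r - \eps < \abs z < r + \eps\}$ is governed by the Newton polygon of $P$ and of $Q$, and since $\log_p(r)$ is \emph{irrational}, no vertex of either Newton polygon sits exactly at abscissa $\log_p(r)$ — the polygons have a well-defined, single slope on a whole neighborhood of that abscissa. Therefore the expansion factor is the same constant $d$ just below and just above $r$, i.e. $\deg_{\mathcal B}(R) = \deg_{\mathcal B'}(R)$, which we then call $\deg_S(R)$. The irrationality of $\log_p(r)$ is exactly what rules out the phenomenon, genuinely possible at rational cuts, where a vertex of the Newton polygon lies at the relevant abscissa and the left and right slopes differ.

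For part (2), suppose $R_*(S) = S$. Since $R_*$ expands the geodesic through $S$ by the factor $d = \deg_S(R)$ on \emph{both} sides (by part (1)), and $R_*$ maps a neighborhood of $S$ in $\Hp$ into $\Hp$ fixing $S$, I would look at the segment of the geodesic $\{S(s) : s \in [r-\eta, r+\eta]\}$ for small $\eta$ and irrational endpoints: its image under $R_*$ is a segment through $S$ of total length $d \cdot 2\eta$, lying along the same geodesic (there is only one geodesic through an irrational cut, up to the choice of direction, and $R_*$ preserves the partition of $\Hp \setminus \{S\}$ into its two branches since it is continuous and fixes $S$). If $d \geq 2$, iterating $R_*$ would push points of this segment off to geodesic distance $\to \infty$ from $S$ while staying on a bounded geodesic arc emanating from $S$ — but more to the point, one compares with the fact that the \emph{finite} segment $[r-\eta, r]$ maps onto a segment of length $d\eta$, which must still lie inside the branch $\mathcal B$ and hence correspond to balls contained in $B$; tracking radii, $\Bf(r - \eta')$ has image $\Bf(r - \eta'')$ with $\log_p((r-\eta'')/r)\,{\cdot}\,(-1) = d \cdot \log_p((r-\eta')/r)\,{\cdot}\,(-1)$ for the appropriate sign convention, and letting $\eta' \to 0$ forces $d = 1$ once one checks that the map on radii near the fixed radius $r$ is a homeomorphism of a neighborhood of $r$ with itself — a degree-$d$ crown map with $d \ge 2$ cannot be injective on the relevant one-parameter family, contradicting that $R_*$ restricted to the geodesic near a fixed point is locally injective (its local degree as a self-map of the tree at an endpoint-type point of the \emph{real} tree $\HpR$ with only two directions must be $1$).

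\textbf{Expected main obstacle.} The delicate step is part (2): making precise why $R_*(S) = S$ at an irrational cut forces local injectivity of $R_*$ on the ambient geodesic, hence $d = 1$. The honest argument is that $R_* : \Hp \to \Hp$ is an \emph{open} map (a consequence of the crown lemma: images of crowns are crowns, images of balls are balls), and an open self-map of the interval $\R$ — which is the local model of $\HpR$ at an irrational cut, since only two branches issue there — that fixes an interior point and multiplies lengths by a \emph{constant} integer $d$ on each side must have $d = 1$, for otherwise it cannot be open at that point (a neighborhood $(r-\eta, r+\eta)$ would map to $(r - d\eta_-, r + d\eta_+)$ only if the constants of proportionality on the two sides, both equal to $d$, still yield a bijection onto a neighborhood — which they do, so the real obstruction is subtler and must instead come from global valence: a rational map of degree $D$ on $\P^1_{\Ber}$ cannot have a fixed point of local degree $\ge 2$ at a type-II-or-III point where the residual action is trivial, and for a type-III (irrational) point the residual action on the \emph{two} directions is necessarily a bijection, forcing $d=1$). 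I would make this rigorous by invoking the residual action on branches: at an irrational cut there are two branches, $R_*$ permutes the branches issuing from $R_*(S)=S$, so it either fixes or swaps the two directions; in either case the induced map on the (two-element) set of directions is a bijection, and a direction along which $R$ has local degree $d$ contributes $d$ to a Riemann–Hurwitz-type count that, combined with the bijectivity of the residual action and the total degree being finite, forces $d = 1$. Pinning down this last count cleanly — rather than hand-waving with "openness" — is where the real work lies.
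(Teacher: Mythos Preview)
Your plan for part~(1) is essentially the paper's argument, phrased via Newton polygons rather than Laurent series. The paper normalizes to $S=S(r)$ with $r\notin p^\Q$, expands $R$ as a Laurent series $\sum_{n\in\Z} h_n z^n$ on a pole-free crown around $\{\abs z=r\}$, and observes that because $r\notin p^\Q$ the maximum of $n\mapsto\abs{h_n}r^n$ is attained at a \emph{unique} index~$m$ (no vertex of the polygon sits at the irrational abscissa, exactly as you say). After arranging $h_0=0$ and $m>0$, one gets $R_*(S(\rho))=S(\abs{h_m}\rho^m)$ for all $\rho$ in a two-sided neighbourhood of~$r$, so $\deg_{\mathcal B}(R)=m$ for each of the two branches simultaneously.

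Part~(2), however, has a genuine gap. None of the mechanisms you propose --- openness of~$R_*$ on the tree, local injectivity along the geodesic, a Riemann--Hurwitz-type count on the two-element set of directions --- actually forces $d=1$; as you yourself concede, a dilation of~$\R$ by a factor~$d\geq 2$ fixing a point is open and bijects the two directions, so nothing purely topological at the level of branches will do the job. The paper's argument is a one-line \emph{arithmetic} observation that you never reach: from $R_*(S(\rho))=S(\abs{h_m}\rho^m)$, the fixed-point condition $R_*(S(r))=S(r)$ reads $\abs{h_m}r^m=r$, i.e.\ $r^{m-1}=1/\abs{h_m}\in p^\Q$. Since $r\notin p^\Q$, a nontrivial integer power of~$r$ cannot lie in~$p^\Q$, and one is forced to have $m=1$. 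The irrationality of the cut is exploited here not through the combinatorics of branches but through the value group of~$\C_p$.
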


Si $S$ est une coupure irrationnelle fix\'ee par~$R$, 
on a $\deg_{\mathcal B} (R)=1$ et $R$
pr\'eserve les longueurs le long des deux branches issues de $S$ (sur un voisinage du point $S$). 
On parle de {\emph{point fixe indiff\'erent}} (voir le paragraphe~\ref{par:points_fixes}). 

\begin{proof}[Esquisse de d\'emonstration]
On peut supposer que~$S$ est la coupure~$S(r)$, o\`u~$r$ est un nombre
r\'eel strictement positif n'appartenant pas \`a~$p^\Q$. Comme $r\not\in p^\Q$,
il existe deux nombres r\'eels positifs $r_1$ et $r_2$ tels que $r_1< r < r_2$ 
et~$R$ n'ait pas de p\^ole dans la couronne $\{ r_1<\abs z< r_2\}$.
Notons 
\[
R(z)=\sum_{n\in\Z} h_n z^n
\]
le d\'eveloppement de~$R$ en s\'erie de Laurent sur cette couronne. 
On a donc 
\[
\lim_{\abs n\ra\infty} \abs{h_n}\abs{z}^n=0
\]
d\`es que $r_1< \abs{z} < r_2$.
Comme $r\not\in p^\Q$, il existe alors un unique entier~$m$
tel que 
\[\abs{h_m}r^m=\max(\abs{h_n}r^n)\ ;
\]
de plus,
quitte \`a diminuer~$r_2$ et \`a augmenter~$r_1$,
on peut m\^eme supposer que
\[ \abs {h_m} \abs{z}^m = \max( \abs{h_n}\abs z^n)\quad
\text{pour tout $z$ tel que $r_1<\abs z<r_2$.} \]
Quitte \`a composer 
$R$ par $z\mapsto z-h_0$
au but on peut supposer
que $h_0$ est nul; on a donc $m\neq 0$. Quitte \`a composer~$R$
 par $z\mapsto 1/z$ \`a la source, on peut aussi supposer
que $m$ est strictement positif. 

Pour tout $\rho$ tel que $r_1<\rho<r_2$, on a alors
$R_*(S(\rho))=S(\abs{h_m}\rho^m)$, d'o\`u $m=\deg_{\mathcal B}(R)$
pour chacune des branches~$\mathcal B$ issues de~$S$.

Enfin, si $R_*(S(r))=S(r)$, alors $r^{m-1}=1/\abs{h_m}$,
d'o\`u $m=1$ car $r\not\in p^\Q$. Le degr\'e local en une coupure fixe 
irrationnelle est donc \'egal \`a~$1$. 
\end{proof}

\subsubsection*{Cas des coupures rationnelles}

Soit $S$ une coupure rationnelle. La fraction
rationnelle~$R$ induit une application de l'ensemble
des branches issues de~$S$ sur l'ensemble des branches issues
de~$R_*(S)$ que nous voulons décrire maintenant.
Rappelons que la partition~$S$ ou, de mani\`ere \'equivalente, 
l'ensemble des branches de $\Hp$ issues de~$S$,
est une droite projective, isomorphe \`a~$\P^1(\bar{\F}_p)$ 
(voir le \S \ref{par:dedekind}). 
La proposition suivante utilise cette param\'etrisation
des branches par~$\P^1(\bar{\F}_p)$.
\begin{prop} 
Soit~$R$ un \'el\'ement non constant de $\C_p(z)$.
Soit~$S$  une coupure rationnelle. 
L'application induite par~$R$ entre l'ensemble des branches issues
de~$S$ et celui des branches issues de $R_*(S)$ 
est une fraction rationnelle~$\bar R$ ;
si $\mathscr B$ est une branche issue
de~$S$ correspondant à un élément~$\xi\in\P^1(\bar\F_p)$,
on a $\deg_{\mathcal B}(R_*)=\deg_\xi(\bar R)$.
\end{prop}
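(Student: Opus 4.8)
Puisque toute coupure rationnelle est l'image de~$S_\can$ par une homographie, on se ram\`ene au cas o\`u $S=S_\can$. Choisissons $g\in\Mob$ tel que $g(S_\can)=S$; en appliquant le lemme sur l'image des petites couronnes \`a une suite de boules ferm\'ees \emph{rationnelles}, on voit que $R_*(S)$ est encore une coupure rationnelle, disons $R_*(S)=h(S_\can)$ avec $h\in\Mob$, si bien que $\widetilde R=h^{-1}\circ R\circ g$ fixe~$S_\can$. Comme les homographies appliquent les branches issues d'une coupure rationnelle sur celles issues de son image \emph{via} leur r\'eduction dans~$\PGL_2(\bar{\F_p})$ (exercice~\ref{exo.stabilisateur}) et pr\'eservent les longueurs g\'eod\'esiques au voisinage d'un point, l'\'enonc\'e pour~$R$ en~$S$ \'equivaut \`a celui pour~$\widetilde R$ en~$S_\can$ --- \`a ceci pr\`es que~$\bar R$ y est remplac\'ee par $\bar h^{-1}\circ\bar R\circ\bar g$, ce qui n'affecte ni son degr\'e, ni ses indices de ramification, ni le fait que ce soit une fraction rationnelle. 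On suppose donc d\'esormais $R_*(S_\can)=S_\can$.

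\'Ecrivons $R=[A:B]$ avec $A,B\in\mathscr O[X,Y]$ homog\`enes de degr\'e $d=\deg R$, normalis\'es comme au~\S\ref{par:reduction}, et posons $\bar R=\bar A/\bar B$. Si $\bar R$ \'etait constante, $\bar A$ et~$\bar B$ seraient proportionnelles; on aurait $\norm{A-\lambda B}<1$ pour un $\lambda\in\mathscr O$ convenable, et~$R$ enverrait la boule unit\'e dans la boule ouverte stricte~$B(\bar\lambda)$, en contradiction avec $R_*(S_\can)=S_\can$. Ainsi~$\bar R$ est une fraction rationnelle non constante, de degr\'e $d'=d-\deg\gcd(\bar A,\bar B)$. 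Pour identifier l'application induite sur les branches, on approche~$S_\can$ dans la branche~$\mathcal B_\alpha$ associ\'ee \`a $\alpha\in\P^1(\bar{\F_p})$ le long des coupures $S(a,r)$, o\`u $a\in\mathscr O$ v\'erifie $\bar a=\alpha$ et $r\nearrow 1$ (\resp le long des $S(r)$, $r\to\infty$, si $\alpha=\infty$), et l'on examine l'image par~$R$ des petites couronnes $C_r=B(\alpha)\setminus\Bf(a,r)$. Lorsque~$\alpha$ n'est pas un z\'ero commun de~$\bar A$ et~$\bar B$, $R$ n'a, pour~$r$ assez proche de~$1$, ni z\'ero ni p\^ole sur~$C_r$, et un calcul direct avec le polygone de Newton montre que la couronne image $R(C_r)$ d\'etermine la branche~$\mathcal B_{\bar R(\alpha)}$ issue de~$S_\can$; l'application induite sur les branches co\"{\i}ncide donc avec~$\bar R$ hors d'un ensemble fini.

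Il reste \`a traiter les directions restantes et \`a identifier le degr\'e local. D'apr\`es le lemme sur l'image des petites couronnes, le module de~$R(C_r)$ vaut $\deg_{\mathcal B_\alpha}(R)$ fois celui de $C_r=\{r<\abs{z-a}<1\}$; et l'\'etude du polygone de Newton de la s\'erie de Laurent de~$R$ sur~$C_r$ (comme dans la preuve de la proposition~\ref{pro:indifferent}) identifie ce facteur \`a l'indice de ramification $\deg_\alpha(\bar R)$. Le point d\'elicat, et le principal obstacle technique, est le cas des directions~$\alpha$ qui sont des z\'eros communs de~$\bar A$ et~$\bar B$ (lorsque $d'<d$): dans la boule~$B(\alpha)$, $R$ poss\`ede alors des z\'eros et des p\^oles qui {\og fusionnent\fg} par r\'eduction, et il faut v\'erifier soigneusement que le polygone de Newton de~$R$ sur~$C_r$ refl\`ete encore exactement le comportement local de la forme r\'eduite $\bar A_1/\bar B_1$ en~$\alpha$; on en d\'eduit \`a la fois que~$R_*$ envoie~$\mathcal B_\alpha$ sur~$\mathcal B_{\bar R(\alpha)}$ et que $\deg_{\mathcal B_\alpha}(R)=\deg_\alpha(\bar R)$. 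On peut aussi conclure par conservation du degr\'e: le degr\'e local de~$R$ en la coupure~$S_\can$ vaut~$d'$, d'o\`u $\sum_{\bar R(\alpha)=\beta}\deg_{\mathcal B_\alpha}(R)=d'=\sum_{\bar R(\alpha)=\beta}\deg_\alpha(\bar R)$ pour tout~$\beta$, et l'\'egalit\'e, d\'ej\`a acquise dans les autres directions, s'en d\'eduit dans celles-ci.
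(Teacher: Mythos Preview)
Your sketch follows exactly the one-line indication the paper gives in lieu of a proof (``$\bar R$ s'obtient par r\'eduction de~$R$ en utilisant des coordonn\'ees dans lesquelles $S$ et $R_*(S)$ co\"{\i}ncident avec la coupure canonique''), and supplies the details the paper omits: the reduction to $S=S_\can=R_*(S_\can)$ via homographies, the identification of the branch map with~$\bar R$ via the Newton polygon on the small annuli $C_r$, and the equality of local degrees. This is the standard route, and your outline is essentially correct.

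Two places deserve tightening. First, your argument that $\bar R$ is not constant (``$R$ enverrait la boule unit\'e dans $B(\bar\lambda)$'') is not quite right as stated, because $R$ may have poles in $\Bf(0,1)$. The clean way is via the Gauss norm: $R_*(S_\can)=S_\can$ means $\abs{P\circ R}_{\text{Gauss}}=\abs{P}_{\text{Gauss}}$ for every polynomial~$P$; if $\bar A=c\,\bar B$ with $c\in\bar\F_p$, choose $a\in\mathscr O$ with $\bar a=c$ and compute $\abs{R-a}_{\text{Gauss}}=\abs{A-aB}_{\text{Gauss}}<1$, contradicting $\abs{z-a}_{\text{Gauss}}=1$. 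Second, the final conservation-of-degree argument is a bit circular in this paper's setup: the quantity $\deg_S(R)$ is \emph{defined} here as $\deg(\bar R)$, so you cannot invoke it to deduce the remaining local degree equalities. You either have to carry the Newton-polygon computation through in the bad directions as well (as you sketch just before), or appeal to an independent definition of the local degree of~$R_*$ at a type~II point --- which the paper does not set up. As an ``esquisse'' this is acceptable, but flag it as such.
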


En particulier, le degré de~$\bar R$ ne dépend pas du
choix de paramétrage des branches et vérifie
\[ \deg(\bar R) \leq \deg (R);\]
on le notera $\deg_S(R)$.

L'application $\bar{R}$ s'obtient par r\'eduction de~$R$ en utilisant des
coordonn\'ees dans lesquelles~$S$ et $R_*(S)$ co\"{\i}ncident avec la
coupure canonique. 

Lorsque $R_*(S)=S$ et $\deg_{S} (R)>1$, 
$R$ dilate strictement les longueurs au voisinage de~$S$ le long
de chaque g\'eod\'esique partant de $S$ le long
d'une branche~$\mathscr B$ telle que $\deg_{\mathbf B}(R)>1$. 
On parlera donc de {\emph{point fixe r\'epulsif}}  
(voir le paragraphe~\ref{par:points_fixes}). 
Ces propri\'et\'es de dilatation peuvent 
\^etre pr\'ecis\'ees pour montrer que de  {\emph{l'action de~$R_*$ 
sur l'arbre~$(\HpR,\dist)$ dilate les distances 
par un facteur \og entier par morceaux \fg}}. Afin de d\'ecrire
cet \'enonc\'e, fixons un segment g\'eod\'esique $\gamma(t)=S_t$, $t\in I$, le long duquel
$R_*$ est injective. 
Pour chaque $t$ dans $I$, notons $\mathcal{B}_t$ la branche 
issue de $\gamma(t)=S_t$ dans laquelle la g\'eod\'esique~$\gamma$ 
entre \`a l'instant~$t$. Soient $t< t'$ deux \'el\'ements de $I$. 
Alors, la fonction $s\mapsto \deg_{\mathcal{B}_s}(R)$ est
une fonction en escalier et ne prend qu'un 
nombre fini de valeurs entre $t$ et $t'$; de plus,
\[
\dist(R_*(S_t),\R_*(S_t')) = \int_t^{t'} \deg_{\mathcal{B}_s}(R) ds.
\]
Le lecteur pourra consulter~\cite[cor.~4.8]{Rivera-Letelier:compositio} pour une preuve de cette formule.

\subsubsection*{Points singuliers}

On peut prolonger l'action de~$R_*$
aux bouts de~$\HpR$, de sorte que chaque point singulier~$S$ de 
$\Hp$ soit envoy\'e par~$R$ sur un point singulier $R_*(S)$.  
Ceci  {\emph{\'etend l'action de
$R$ sur $\P^1(\C_p)$ \`a~$\Hp$, et donc \`a~$\BP(\C_p)$ en une
application lipschitzienne}}
\[
R_* \colon \BP(\C_p)\to \BP(\C_p),
\]
de l'arbre~$\BP(\C_p)$ dans lui-m\^eme ; plus pr\'ecis\'ement, $R_*$ 
est lipschitzienne en restriction \`a~$\HpR$ et \`a~$\Hp$ pour la distance $\dist$,
$R_*$ est lipschitzienne en restriction \`a $\P^1(\C_p)$ pour la distance
sph\'erique $\delta$ et, enfin, $R_*$ est lipschitzienne sur l'arbre $\BP(\C_p)$
pour la distance $L$ introduite au paragraphe \ref{par:sphereinfinie}.

Nous renvoyons le lecteur int\'eress\'e aux articles~\cite{Rivera-Letelier:compositio,
Rivera-Letelier:CMH} pour plus de d\'etails. Citons simplement le fait suivant: {\emph{si 
$S$ est un point singulier qui est fix\'e par $R_*$, alors $S$ est un point fixe indiff\'erent.}}
Ainsi, dans $\HpR$, 
seules les coupures rationnelles peuvent fournir des points fixes 
(ou p\'eriodiques) r\'epulsifs. 
Nous emploierons ce fait \`a plusieurs reprises dans la partie suivante. 
Terminons celle-ci par un exemple.

%%%
\subsection{Le cas des fractions rationnelles de degr\'e~$2$}
%%%

Sur un corps alg\'ebriquement clos de caract\'eristique diff\'erente de $2$, il 
n'existe qu'une seule fraction rationnelle de  degr\'e~$2$
modulo composition
pr\`es \`a droite et \`a gauche par des
transformations de M\"obius, c'est la fraction rationnelle $z\mapsto z^2$.

Par contre, en
caract\'eristique~2, il existe deux mod\`eles qui
sont les applications $Q\colon z \mapsto z^2$ et $R\colon z \mapsto z + z^2$.
L'application quadratique $Q(z) = z^2$
commute avec les automorphismes de M\"obius de
${\mathbf{P}} ({\mathbf{F}}_2)$, donc
tous les \'el\'ements de ${\mathbf{P}} ( {\mathbf{F}}_2)$ jouent le m\^eme r\^ole pour
cette application. 
(Plus g\'en\'eralement, si $k$ est un entier strictement positif
et $M$ un automorphisme de M\"obius \`a coefficients
dans le corps~$\F_{2^k}$, l'it\'er\'e $k$-fois de~$Q$ commute avec~$M$.
Du point de vue de la dynamique, tous les \'el\'ements de~$\mathbf P(\overline{\mathbf F_2})$ jouent ainsi essentiellement le m\^eme r\^ole.)
En revanche, le point~$\infty$
joue un r\^ole particulier pour l'application~$R$:
c'est  le seul point en lequel le degr\'e local est 2, tandis que
 que tous les autres points sont g\'eom\'etriquement \'equivalents
 car $R(z+a)=R(z)+R(a)$.

\begin{ex}
D\'emontrer ces faits.
\end{ex}

Soit~$Q$ une fraction rationnelle de degr\'e~$2$ \`a coefficients
dans~$\C_p$. D'apr\`es ce qu'on vient de dire,
on peut supposer que $Q(z)=z^2$.

\subsubsection*{Supposons d'abord $p\neq 2$.}

Notons $\gamma$ la g\'eod\'esique de $\BP(\C_p)$ joignant~$0$ \`a~$\infty$ ; elle est
constitu\'ee des coupures~$S (r)$, le nombre r\'eel~$r$ d\'ecrivant~$\R_+$. Puisque $Q(\Bf (r))=\Bf (r^2)$, on obtient  $ Q_* (S (r)) = S (r^2)$ et $ \deg_{S(r)} (Q) = 2$.
Comme
\[
 {\dist}( S (r), S (r')) = \abs{ \log_p r - \log_p r' }  \; ,
\]
on observe que les distances sont multipli\'ees par~$2$ le long de $\gamma$.

Pour $a \in {\mathbf{C}}_p^*$ et $0 < r < \abs a$, 
la coupure  $S (a, r)$ (d\'etermin\'ee par la boule $\Bf (a,r)$)
satisfait
$\dist(S (a, r), \gamma) = \log_p \abs a - \log_p r $. 
Pour tout $w\in \C_p$ satisfaisant $\abs{w}\leq r$,  on a 
\[
\abs{Q(a+w)-a^2} = \abs{2aw+w^2}=  \abs{w} \abs{2a+w}= \abs{a}\abs{w}
\]
car $\abs{a} >r\geq \abs{w}$ et car~$\abs{2a}=\abs{a}$ puisque~$p\neq 2$. Ainsi
\[
Q_* (S (a, r)) = S (a^2, \abs a r) .
\]
La restriction de $Q_*$ \`a la demi-g\'eod\'esique
joignant~$a$ \`a~$\gamma$ (constitu\'ee des $S (a, t)$ avec $0 < t \leq \abs a $) 
est donc une
isom\'etrie sur la demi-g\'eod\'esique joignant
$a^2$ \`a~$\gamma$, si bien que 
\[ \deg_{S (a, r)} (Q) = 1.\]

\subsubsection*{Consid\'erons maintenant le cas $p = 2$}

Conservons les
notations pr\'ec\'edentes. Si~$S$ est situ\'ee sur $\gamma$,
la situation est la m\^eme que pr\'ec\'edemment.
Par contre, pour une coupure $S = S (a, r)$ avec
$0 < r < \abs{a }$, il faut distinguer trois
cas :
\begin{enumerate}
\item $\dist(S, \gamma) < 1 $,  \ie $
r >  \abs{a }/2$. On a alors $\abs{ Q (a + w) - Q (a) }  = 
\abs{w }^2$ pour $r > \abs{w } > \frac{1}{2}
\abs{a } $, si bien que
\begin{align*}
Q_* ( S (a, r))  & = S (a^2, r^2) \\
\dist(Q_* (S (a, r)), \gamma) & = 2  \dist(S (a, r), \gamma), \\
\deg_S (Q) & = 2,
\end{align*}
et $\bar Q (z) = z^2$ dans les
param\'etrages naturels de~$S$ et $Q_* (S)$.

\item $ \dist(S, \gamma) > 1 $,  \ie $
0 < r <  \abs{a }/2$. On a maintenant
\begin{align*}
Q_* ( S (a, r)) & = S (a^2 , \frac{1}{2} \abs{a } r) , \\
\dist(Q_* (S) , \gamma) & = \dist (S , \gamma) + 1 , \\
\deg_S ( Q) & = 1 .
\end{align*}

\item $ \dist(S, \gamma) = 1 $, \ie $ r = \abs{a }/2$. 
Dans ce cas, on a 
\begin{align*}
Q_* ( S) & = S (a^2 , \frac{1}{4} \abs{a }^2) , \\
 \dist (Q_* (S) , \gamma) & = 2 , \\
\deg_S ( Q) &= 2 ,
\end{align*}
mais dans les param\'etrages naturels,
$ \bar Q (z) = z + z^2$.
\end{enumerate}

Ainsi, on a $\deg_S(Q)=2$ dans les cas~(1) et~(3), mais les
r\'eductions $\bar Q$ diff\`erent.

%%%%%%%%%%%%%%%%%%%%%%%%%%%%%%%%%%%%%%%
%%%%%%%%%%%%%%%%%%%%%%%%%%%%%%%%%%%%%%%

\section{Th\'eorie de Fatou-Julia, domaines quasi-p\'eriodiques}\label{par:fatoujulia}

%%%%%%%%%%%%%%%%%%%%%%%%%%%%%%%%%%%%%%%
%%%%%%%%%%%%%%%%%%%%%%%%%%%%%%%%%%%%%%%

Soit~$R$ une fraction rationnelle non constante.
On a vu que $R$ d\'etermine un endomorphisme
de~$\BP(\C_p)$ qui pr\'eserve  $\P^1(\C_p)$, $\HpR$, $\HpQ$ et $\Hpsing$.

Nous allons maintenant 
d\'ecrire le type des points p\'eriodiques de~$R$, ainsi que
la structure des ensembles  de Fatou et de Julia.
Il s'agit d'analyser \`a la fois la dynamique sur $\P^1(\C_p)$ et $\BP(\C_p)$. 

\subsection{Points fixes, points p\'eriodiques et bassins d'attraction}
\label{par:points_fixes}

Dans $\P^1(\C_p)$, un point fixe $z_0$ de~$R$, c'est-\`a-dire
un \'el\'ement $z_0\in\P^1(\C_p)$ tel que $R(z_0)=z_0$,
est dit \emph{super-attractif,} \emph{attractif,} \emph{indiff\'erent} ou 
\emph{r\'epulsif}
suivant que $R'(z_0)=0$, $\abs{R'(z_0)}<1$, $\abs{R'(z_0)}=1$
ou $\abs{R'(z_0)}>1$.
Cette notion est invariante par conjugaison.\footnote
  {La  d\'eriv\'ee $R'(z_0)$ est d\'efinie
  de mani\`ere analogue \`a ce qui a \'et\'e fait p.~\pageref{page.derivee}.
  Si $z_0\neq\infty$, il s'agit de la d\'eriv\'ee usuelle;
  si $z_0=\infty$, on conjugue d'abord~$R$ par l'inversion.}
 
Dans l'espace hyperbolique~$\Hp$, une coupure~$S$
qui est fix\'ee par~$R$ est dite r\'epulsive
si $\deg_S (R)>1$ et indiff\'erente si $\deg_S (R)=1$. 
D'apr\`es le paragraphe \ref{subsec.action-coupures}, 
une coupure irrationnelle ou un bout singulier fix\'e par $R$
est un point fixe indiff\'erent.

Un point p\'eriodique de p\'eriode~$k$ sera dit attractif, indiff\'erent ou r\'epulsif
si c'est un point fixe de~$R^k$ de ce type.

\medskip

Si $\xi$ est un point de~$\BP(\C_p)$,
on appelle \emph{bassin d'attraction (fin)}
l'ensemble des $x\in\BP(\C_p)$ tels que $R^n(x)$ converge
vers~$\xi$ pour la topologie d'arbre r\'eel.
{\emph{Le bassin d'attraction (fin) d'un point fixe attractif
est un  ouvert de~$\BP(\C_p)$ pour la topologie faible.}}

Contrairement au cas complexe, il peut y avoir une infinit\'e d'orbites
p\'eriodiques attractives ; en particulier, un bassin d'attraction
ne contient pas forc\'ement de point critique.
%Serge J'ai chang\'e un peu l'annonce des exercices pour \^etre en accord avec la rem du rapporteur.

\begin{eg} Si $R(z)=z^p$, la r\'eduction de~$R$  est donn\'ee
par $\bar R(\bar z)=\bar z^p$. Ainsi, $R$ a bonne 
r\'eduction et  $R(S_\can)=S_\can$ (voir le \S \ref{par:reduction}).
Toutes les orbites de~$\bar R$ dans~$\P^1(\overline{\F_p})$
sont pr\'ep\'eriodiques ; chaque orbite de p\'eriode~$k$ se rel\`eve
en une orbite de m\^eme p\'eriode  qui est attractive.
(Si $\alpha$ est un \'el\'ement du corps fini \`a~$p^k$ \'el\'ements~$\F_{p^k}$
dans~$\overline{\F_p}$, alors $\alpha^{p^k}=\alpha$;
pour tout $z_0\in B(\alpha)$, la suite~$(R^{nk}(z_0))_n$ converge
vers une racine de l'unit\'e (ou~$0$ si $\alpha=0$)
dans~$\C_p$ qu'on appelle le \emph{repr\'esentant de Teichm\"uller} de~$\alpha$.)
\end{eg}

\begin{ex}
Soit~$Q$ la fraction rationnelle d\'efinie par 
\[
Q(z) = \lambda \left( \frac{z}{z-1} + z^p \right),
\]
o\`u $\lambda \in \C_p$ satisfait $p^{-(p-1)}< \abs{\lambda}^{p+1} < 1$. 

\begin{enumerate}\def\labelenumi{\theenumi)}
\item D\'emontrer que $0$ et~$\infty$ sont des points critiques fixes
et attractifs.
Nous noterons~$W_0$ et $W_\infty$ leurs bassins d'attraction. 

\item 
D\'emontrer que $\abs{Q(z)}= \abs{\lambda}\abs{z}$ lorsque $\abs{z}<1$. 
En d\'eduire que~$W_0$ contient la boule ouverte~$B(0)$. 

\item Si $\abs{z}>1$, d\'emontrer que $\abs{Q(z)}=\abs{\lambda} \abs{z}^p$. 
En d\'eduire  que~$W_\infty$ contient l'ensemble
des points~$z$ satisfaisant $\abs{\lambda}\abs{z}^{p-1} >1 $.
\item 
D\'emontrer que la valeur absolue de tous les points critiques de~$Q$ dans~$\C_p$ 
est \'egale \`a~$p^{1/(p+1)}$ (commencer par prouver qu'ils sont de
valeur absolue~$>1$); en d\'eduire qu'ils sont contenus dans~$W_\infty$.

\item D\'emontrer que le bassin d'attraction de l'origine 
ne contient aucun point critique. 
\end{enumerate}
\end{ex}

\subsection{Dynamique locale d'une s\'erie enti\`ere}\label{par:perio}

La dynamique locale d'une fraction rationnelle $R \in \C_p(z)$ au voisinage
d'un point fixe a \'et\'e \'etudi\'ee en d\'etail par plusieurs auteurs (voir~\cite{Rivera-Letelier:Asterisque}, \cite{Lubin:1994}, \cite{Herman-Yoccoz:1983}, 
et les r\'ef\'erences qui s'y trouvent).
Le but de ce paragraphe est de signaler quelques lemmes 
qui manifestent clairement le caract\`ere non archim\'edien de $\C_p$ et permettent d'appr\'ehender la dynamique au voisinage d'un point fixe attractif ou indiff\'erent dans $\P^1(\C_p)$. 

Pour simplifier,
nous \'etudions ici la dynamique locale d'une s\'erie enti\`ere 
convergente~$f$ fixant l'origine ; 
le cas d'un point p\'eriodique~$z_0$ de~$R$ de p\'eriode~$k$ 
s'y ram\`ene en rempla\c{c}ant~$R$ par $R^k$ et en conjuguant~$R$ 
par une transformation de M\"obius qui 
envoie~$z_0$ sur l'origine.

\subsubsection*{Rayon de convergence, principe du maximum}

Commen\c{c}ons par quelques g\'en\'eralit\'es.
Soit 
\[
f(z) = a_0 + a_1 z + a_2 z^2 + \dots + a_n z^n  + \dots
\]
une s\'erie enti\`ere \`a coefficients dans~$\C_p$.
Son rayon de convergence $\rho(f)$ est donn\'e par la formule usuelle:
\[ \rho(f) = 1 / \limsup \abs{a_k} ^{1/k} .  \]
Supposons-le strictement positif.
Comme en analyse complexe, $f$ d\'efinit une fonction continue
sur la boule non circonf\'erenci\'ee~$\Bo(0,\rho(f))$.

Comme en analyse complexe, les fractions rationnelles peuvent
\^etre d\'evelopp\'ees en s\'erie enti\`ere: si $R\in\C_p(z)$
est une fraction rationnelle sans p\^ole dans le disque non circonf\'erenci\'e~$\Bo(0,r)$, alors le d\'eveloppement en s\'erie formelle de~$R$ en l'origine
poss\`ede un rayon de convergence~$\geq r$ et la somme de cette s\'erie
co\"{\i}ncide avec~$R$ dans le disque~$\Bo(0,r)$.

Revenons au cas g\'en\'eral d'une s\'erie enti\`ere \`a coefficients
dans~$\C_p$, comme ci-dessus.
Pour $r\geq 0$, posons $M(r)=\sup_{k\geq 0}\abs{a_k} r^k$.
Supposons $r<\rho(f)$; alors,
$M(r)=\sup_{\abs{z}\leq r} \abs{f(z)}$. 
L'in\'egalit\'e $\abs{f(z)}\leq M(r)$ si $\abs z\leq r$ r\'esulte en effet
simplement de l'in\'egalit\'e ultram\'etrique, chaque terme de
la s\'erie d\'efinissant $f(z)$ \'etant de module au plus \'egal \`a~$M(r)$.

Si $\abs z<r$ et $f$ n'est pas constante,
on constate que l'in\'egalit\'e est stricte:
$\abs{f(z)}<M(r)$ --- c'est un analogue du \emph{principe du maximum}.
Si $r\not\in p^{\Q}$, la borne sup\'erieure qui définit~$M(r)$
n'est pas non plus atteinte 
puisqu'il n'existe pas d'\'el\'ement de~$\C_p$ de module~$r$.

Elle est en revanche atteinte en un point~$z$ de module~$r$
dans le cas o\`u $r\in p^{\Q}$. 
L'argument
est analogue \`a celui du lemme~\ref{lemm.gauss}:
par changement de variables et multiplication
par un scalaire analogues  \`a ce qui a \'et\'e fait p.~\pageref{page.gauss},
on se ram\`ene au cas o\`u $r=M(r)=1$. Alors, la r\'eduction~$\overline f$
modulo~$\mathfrak m$ de~$f$ est un polyn\^ome 
et pour tout \'el\'ement~$z$ de~$\mathscr O$
dont la r\'eduction modulo~$\mathfrak m$ n'est pas un z\'ero de~$\overline f$,
on a $\abs{f(z)}=1$.
(Voir aussi \cite{dwork-g-s94}, p.~114, prop.~1.1.)

Par passage \`a la limite, on en d\'eduit aussi que
$M(r)=\sup_{\abs z<r} \abs{f(z)}$ si $r\leq\rho(f)$.

Si de plus $f(0)=0$, la th\'eorie du polygone de Newton 
(voir~\cite{dwork-g-s94}, chapitre~II) entra\^{\i}ne m\^eme
que $f(\Bf(0,r))=\Bf(0,M(r))$ pour tout~$r$ tel que $0\leq r<\rho(f)$.

\subsubsection*{Boules invariantes}
%

% Dans la suite de ce paragraphe, on consid\`ere une s\'erie enti\`ere
On a $f'(0)=a_1$. Supposons $a_1\neq 0$. Alors,
pour $\abs z$ assez petit, $\abs{f(z)}=\abs{a_1}\abs z$.
Par l'in\'egalit\'e ultram\'etrique, il suffit en effet 
que $\abs z <r$, o\`u $r$ est choisi de sorte que
$\abs{a_n} r^{n-1}\leq \abs{a_1}$ pour tout~$n\geq 1$.

En particulier, si $0$ est un point fixe r\'epulsif, alors $\abs{a_1}>1$
et, pour $r$ assez petit, aucune boule $B(0,r)$ de~$\C_p$
n'est stable par~$f$.

En revanche, si $0$ est un point fixe indiff\'erent ou attractif,
toute boule de centre~$0$ et de rayon assez petit est stable par~$f$.
Cela vaut encore si $f'(0)=0$, il suffit de reprendre l'argument
pr\'ec\'edent en tenant compte du premier terme non nul
(si c'est celui de degr\'e~$d$, on aura $\abs{f(z)}=\abs{a_d}\abs{z^d}$
pour $\abs{z}$ assez petit).

Nous avons ainsi justifi\'e le lemme suivant.
\begin{lemm}\label{lemm.boules}
Supposons que $0$ soit un point fixe indiff\'erent ou attractif.
Toute boule de centre~$0$ et de rayon assez petit est alors stable par~$f$.

Inversement, si $0$ est un point fixe r\'epulsif,
aucune boule de centre~$0$ et de rayon assez petit n'est stable 
par~$f$.
\end{lemm}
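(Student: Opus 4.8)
Le plan est de tout ramener à l'étude de la fonction $M(r)=\sup_{n\geq 1}\abs{a_n}r^n$ déjà introduite plus haut, en écrivant $f(z)=\sum_{n\geq 1}a_nz^n$ puisque $f(0)=0$. D'abord je rappellerais que, pour $0<r<\rho(f)$ et $\abs z\leq r$, l'inégalité ultramétrique donne $\abs{f(z)}\leq M(r)$, d'où $f(\Bf(0,r))\subset\Bf(0,M(r))$ ; la boule $\Bf(0,r)$ est donc stable par~$f$ dès que $M(r)\leq r$, autrement dit dès que $\abs{a_n}r^{n-1}\leq 1$ pour tout $n\geq 1$. La condition relative à $n=1$, à savoir $\abs{a_1}=\abs{f'(0)}\leq 1$, caractérise précisément le fait que~$0$ n'est pas répulsif.

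Ensuite je contrôlerais uniformément les termes d'ordre $n\geq 2$ : en fixant $r_0$ tel que $0<r_0<\rho(f)$, la suite $(\abs{a_n}r_0^n)$ tend vers~$0$, donc est bornée, disons par~$K$, et l'on obtient $\abs{a_n}r^{n-1}\leq (K/r_0^2)\,r$ pour tout $n\geq 2$ et tout $r\leq r_0$. Par conséquent, dès que $r$ est assez petit, on a $\abs{a_n}r^{n-1}\leq 1$ pour tout $n\geq 2$ simultanément, et alors $M(r)\leq r$ équivaut à $\abs{a_1}\leq 1$. Ceci donne directement la première assertion — le cas super-attractif $a_1=0$, voire $f$ constante, étant inclus — ainsi que le fait que toute boule de centre~$0$ contenue dans une telle boule reste stable.

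Pour la réciproque, lorsque~$0$ est répulsif, c'est-à-dire $\abs{a_1}>1$, la même majoration des termes d'ordre $\geq 2$ montre que pour $\abs z$ assez petit le terme linéaire domine strictement tous les autres dans la série, donc $\abs{f(z)}=\abs{a_1}\abs z$. Je choisirais alors un point~$z$ dont le module est compris strictement entre $r/\abs{a_1}$ et~$r$ (c'est possible, $p^\Q$ étant dense dans $\R_+^*$ et $\abs{a_1}>1$) : on a $z\in\Bf(0,r)$ et $\abs{f(z)}=\abs{a_1}\abs z>r$, si bien que $\Bf(0,r)$ n'est pas stable. Le seul point un peu délicat, plus technique que profond, est précisément l'uniformité en~$n$ du contrôle des termes d'ordre $\geq 2$, assurée par la majoration ci-dessus valable pour tout $n\geq 2$ dès que $r\leq r_0<\rho(f)$ ; on pourrait aussi, pour la réciproque, invoquer directement la théorie du polygone de Newton rappelée plus haut, qui donne $f(\Bf(0,r))=\Bf(0,M(r))$ et rend la conclusion immédiate puisque $M(r)=\abs{a_1}r>r$ pour $r$ petit.
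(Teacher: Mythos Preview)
Your proof is correct and follows essentially the same approach as the paper: both arguments control the higher-order terms uniformly (you via the explicit bound $\abs{a_n}r^{n-1}\leq (K/r_0^2)r$, the paper by choosing~$r$ so that $\abs{a_n}r^{n-1}\leq\abs{a_1}$) in order to reduce everything to the linear term~$a_1z$, then split into the cases $\abs{a_1}\leq 1$ and $\abs{a_1}>1$. Your presentation through the function~$M(r)$ is slightly more systematic, but the underlying idea is identical.
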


Dans le complexe, c'est-\`a-dire pour la dynamique d'un polyn\^ome ou
d'une application holomorphe d'une variable, la dynamique au voisinage
d'un point fixe indiff\'erent est en g\'en\'erale tr\`es riche. Nous renvoyons le
lecteur au livre~\cite{Milnor:book}, qui contient une description de la dynamique
lorsque la d\'eriv\'ee de l'application au point fixe est une racine de l'unit\'e
(fleur de Leau-Fatou) ou d'ordre infini (ph\'enom\`enes de petits diviseurs).

Le lemme précédent montre que certains de ces phénomènes disparaissent dans
le monde de la dynamique $p$-adique. 
Il résulte en effet de ce lemme et du suivant
que la dynamique au voisinage d'un point fixe indifférent 
est celle d'une isométrie.
Par ailleurs, lorsque $f(0)=0$ et $f'(0)$ n'est pas une racine de l'unité,
Herman et Yoccoz ont démontré dans~\cite{Herman-Yoccoz:1983} qu'il existe
une série entière $g(z)=z+b_2z^2+\dots$, convergente au voisinage de~$0$,
qui conjugue~$f$ à sa partie linéaire: 
\[
g\circ f\circ g^{-1}(z) =a_1 z.
\] 
Les phénomènes de petits diviseurs disparaissent donc dans le monde
de la dynamique $p$-adique.\footnote{Par contre, ils réapparaissent dans le
cadre de la dynamique ultramétrique en caractéristique positive~\cite{lindahl2004}.}
 
% 
% Le lemme pr\'ec\'edent montre que ces ph\'enom\`enes disparaissent dans le
% monde de la dynamique $p$-adique (ou ultram\'etrique). 
% Plus pr\'ecis\'ement, il r\'esulte de ce lemme et du suivant 
% que la dynamique au voisinage d'un point fixe indiff\'erent 
% est celle d'une isom\'etrie. Les subtilit\'es 
% de la th\'eorie des syst\`emes dynamiques complexes
% caus\'es par les ph\'enom\`enes de petits diviseurs 
% n'ont donc pas d'analogue~$p$-adique. 
% 

\begin{lemm}\label{lemm.schwarz}
~$r$ un nombre r\'eel strictement positif.
Soit $f(z)= \sum_{k\geq 1} a_k z^{k}$ une s\'erie enti\`ere \`a 
coefficients dans $\C_p$. Soit $c$ un nombre r\'eel positif.

1) Les propri\'et\'es suivantes sont \'equivalentes : 
\begin{enumerate}\def\theenumi{\roman{enumi}}
\item Le rayon de convergence de~$f$ est au moins \'egal \`a~$r$
et l'on a $\abs{f(z)}\leq c \abs{z}$ pour tout point~$z$ de $\Bo (0,r)$ ;
\item Pour tout entier~$k\geq 1$, $\abs{a_k}\leq c r^{1-k}$.
\end{enumerate}

2) Si elles sont satisfaites,
l'application $f$ est $c$-lipschitzienne sur $B(0,r)$.

3) Supposons qu'elles soient satisfaites; 
les propri\'et\'es suivantes sont alors \'equivalentes:
\begin{enumerate} \def\theenumi{\roman{enumi}}
\item $f$ induit une isom\'etrie de $B(0,r)$ ;
\item $f$ induit un automorphisme de $B(0,r)$ ;
\item pour tout $z\in\Bo(0,r)$, on a  $\abs{f'(z)}=1$;
\item pour tout $z\in \Bo(0,r)$, on a  $\abs{f'(z)}\leq 1$ et l'\'egalit\'e est atteinte.
\end{enumerate}
\end{lemm}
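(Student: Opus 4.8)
The plan is to get 1) by comparing, on each sub-ball, the bound $\abs{f(z)}\le c\abs z$ with the maximum-principle identity $\sup_{\abs z\le\rho}\abs{f(z)}=\max_k\abs{a_k}\rho^k$ for power series recalled above (cf.\ also the proof of Lemma~\ref{lemm.gauss}); to read 2) off from the coefficient estimates thus produced; and to prove the equivalences in 3) by showing that, under the standing hypothesis --- which amounts to $\abs{a_k}r^{k-1}\le c$ for all $k\ge1$, and which in the case $c\le1$ of interest here reads $\abs{a_k}r^{k-1}\le1$ --- each of (i)--(iv) is equivalent to the single numerical condition $\abs{a_1}=1$.

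For 1): the implication (ii)$\Rightarrow$(i) is immediate from the ultrametric inequality, since $\abs{a_k}\le cr^{1-k}$ forces $\limsup\abs{a_k}^{1/k}\le 1/r$, hence $\rho(f)\ge r$, while for $\abs z<r$ one has $\abs{a_kz^k}=\abs{a_k}\abs z^k\le c\abs z\,(\abs z/r)^{k-1}\le c\abs z$; for the converse, fix $\rho<r$ with $\rho\in p^{\Q}$, so $\max_k\abs{a_k}\rho^k=\sup_{\abs z\le\rho}\abs{f(z)}\le c\rho$, giving $\abs{a_k}\le c\rho^{1-k}$, and let $\rho\to r^-$. Assertion 2) then follows by writing $f(z)-f(w)=(z-w)\sum_{k\ge1}a_k(z^{k-1}+z^{k-2}w+\dots+w^{k-1})$ and bounding the $k$-th summand by $\abs{a_k}\max(\abs z,\abs w)^{k-1}\le\abs{a_k}r^{k-1}\le c$ for $z,w\in B(0,r)$.

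The substance is 3), and I would base it on two symmetric observations valid under the hypothesis. (a) If $f$ is an isometry, then $w=0$ and $\abs z$ small give $\abs{f(z)}=\abs{a_1}\abs z=\abs z$, so $a_1\ne0$ and $\abs{a_1}=1$; conversely, if $\abs{a_1}=1$, the estimates $\abs{a_k}\max(\abs z,\abs w)^{k-1}<\abs{a_k}r^{k-1}\le1=\abs{a_1}$ for $k\ge2$ and $z,w$ in the open ball show the linear term strictly dominates in $f(z)-f(w)=(z-w)(a_1+\sum_{k\ge2}a_k(z^{k-1}+\dots+w^{k-1}))$, whence $\abs{f(z)-f(w)}=\abs{z-w}$. (b) Since $\abs{ka_kz^{k-1}}\le\abs{a_k}r^{k-1}\le1$ on $\Bo(0,r)$ we get $\abs{f'(z)}\le1$ everywhere, and if moreover $\abs{a_1}=1$ those terms are $<1$ for $k\ge2$, so $\abs{f'(z)}=\abs{a_1}=1$ everywhere; in the other direction, the maximum principle applied to the power series $f'$ gives $\sup_{\abs z<r}\abs{f'(z)}=\sup_k\abs{ka_k}r^{k-1}$, and since the terms $\abs{ka_kz^{k-1}}$ with $k\ge2$ are \emph{strictly} below $\abs{ka_k}r^{k-1}$ on the open ball, the value $1$ can be attained by $\abs{f'}$ only through the constant term, which forces $a_1\ne0$ and $\abs{a_1}=1$. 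Hence (iii)$\Leftrightarrow$(iv)$\Leftrightarrow(\abs{a_1}=1)$, and with (a) also (i)$\Leftrightarrow(\abs{a_1}=1)$. To bring in (ii): an automorphism of $B(0,r)$ is in particular a self-map, so $\sup_{\abs z<r}\abs{f(z)}\le r$, i.e.\ $\abs{a_k}r^{k-1}\le1$ for all $k$; the same applied to the analytic inverse $g$ (with $g'(0)=1/a_1$) gives $\abs{1/a_1}\le1$, so $\abs{a_1}=1$, and (a) makes $f$ an isometry: (ii)$\Rightarrow$(i). For (i)$\Rightarrow$(ii), when $\abs{a_1}=1$ the Newton-polygon identity $f(\Bf(0,s))=\Bf(0,M(s))$ recalled above (or Proposition~\ref{prop:boules} applied to the truncations of $f$) gives $M(s)=s$ and hence $f(\Bf(0,s))=\Bf(0,s)$ for every $s<r$, so $f$ maps $B(0,r)$ onto itself; an isometry being injective, and its inverse coinciding with the formal inverse series (which converges by part~1 applied to it), $f$ is an automorphism.

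The real obstacle, as (b) shows, is the passage from a condition on $f'$ back to the isometry statement: a hypothesis on $f'=\sum_k ka_kz^{k-1}$ only controls $\abs{ka_k}r^{k-1}=\abs k\,\abs{a_k}r^{k-1}$, which, because $\abs k$ may be $<1$ when $p\mid k$, is strictly weaker than the bound $\abs{a_k}r^{k-1}\le1$ actually needed --- so it is the a priori estimate of part~1 that closes the gap. Keeping strict versus non-strict inequalities straight on $\Bo(0,r)$ as opposed to $\Bf(0,r)$, and treating the supremum defining $M(r)$ when $r\notin p^{\Q}$, is the remaining routine bookkeeping.
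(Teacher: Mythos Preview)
Your arguments for parts~1) and~2) are essentially those of the paper. For part~3) your route is genuinely different: you reduce all four conditions to the single numerical criterion $\abs{a_1}=1$ via direct coefficient estimates, whereas the paper argues function-theoretically, proving a Schwarz-type bound $\abs{f'}\le\tilde c$ through $g(z)=f(z)/z$ and then cycling through the implications by repeated appeals to the maximum principle (for $f'$, for $1/f'$, and again for $g$). Your approach is more elementary and makes the role of the standing hypothesis transparent --- exactly as you note in your last paragraph, the bound $\abs{a_k}r^{k-1}\le 1$ is what bridges the gap between control of $\abs{ka_k}$ and control of $\abs{a_k}$.

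Two small points. First, in your (ii)$\Rightarrow\abs{a_1}=1$ you invoke ``the analytic inverse~$g$''; that a bijective power series on $\Bo(0,r)$ has an inverse which is again a convergent power series on $\Bo(0,r)$ is true but not entirely free --- the paper sidesteps this by applying the maximum principle to $1/f'$ instead. Second, you assume $c\le 1$ ``as the case of interest'' for part~3); this is indeed necessary (the equivalences fail for $c>1$, e.g.\ $f(z)=z+\lambda z^p$ with $\abs\lambda=p$ on $\Bo(0,1)$ satisfies~(iii) but not~(i)), and the paper's proof of (iv)$\Rightarrow$(i) tacitly uses $\tilde c\le 1$ in the same way when it writes ``on a vu que $\abs{g(z)}\le 1$''. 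So on this point you and the paper are on equal footing.
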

\begin{proof}
Lorsque $\abs{f(z)}$ est inf\'erieur \`a~$c\abs z$ sur la boule de centre $0$ et
de rayon $r$, le supremum de $\abs{f(z)}$ sur cette boule est inf\'erieur \`a~$cr$ ; 
ainsi, le maximum des $\abs{a_k} r^k$ est inf\'erieur \`a~$cr$. R\'eciproquement, 
lorsque $\max ( \abs{a_k} r^{k-1})$ est major\'e par $c$, on obtient 
\[
\abs{f(z)} = \abs{\sum_{k\geq 1} a_k z^{k-1}} \abs{z} \leq c \abs{z}.
\]
Ceci d\'emontre la premi\`ere propri\'et\'e \'enonc\'ee. La deuxi\`eme s'en d\'eduit 
\`a l'aide de la majoration
\[ \abs{z^k-w^k} = \abs{z-w}\abs{z^{k-1}+z^{k-2}w+\dots+w^{k-1}}
   \leq r^{k-1} \abs{z-w} \]
pour $z,w\in\Bo(0,r)$.
Passons \`a la troisi\`eme. 

Par hypoth\`ese, $\abs{f(z)}\leq c\abs z$ si $\abs z<r$.
D'autre part, on vient de rappeler que pour $s<r$, $f(\Bf(0,s))=\Bf(0,s')$,
o\`u 
$
s'=\max \abs{a_n}s^n.
$
Lorsque $s$ tend vers~$r$, $s'$ tend vers~$r'=\max \abs{a_n}r^n=\tilde cr$,
o\`u $\tilde c$ est d\'efini par 
\[
\tilde c = \max\abs{a_n}r^{n-1}.
\]
L'image de $\Bo(0,r)$ est donc la boule~$\Bo(0,\tilde cr)$.
Sous les assertions~(i) et~(ii), on voit donc que $\tilde c=1$.

Commen\c{c}ons par \'etablir l'analogue du \emph{lemme de Schwarz}
en d\'emontrant que $\abs{f'(z)}\leq\tilde c$ pour tout~$z\in\Bo(0,r)$.
Posons en effet $g(z)=f(z)/z$ pour $z\in\Bo(0,r)$ ; c'est la somme d'une 
s\'erie enti\`ere convergente et $\abs{g(z)}\leq \tilde c r/s$ si $\abs z\leq s<r$. 
Par le principe du maximum,
on a donc $\sup_{\Bf(0,s)} \abs g\leq \tilde c r/s$.
Faisant tendre~$s$ vers~$r$, on voit que $\abs{g(z)}\leq \tilde c $ 
pour tout~$z\in\Bo(0,r)$.  En particulier, $\abs{g(0)}=\abs{f'(0)}\leq\tilde c$.
Le cas g\'en\'eral s'en d\'eduit en consid\'erant la s\'erie $f(z+w)$, pour $w
\in\Bo(0,r)$.

Nous pouvons maintenant d\'emontrer le point $(3)$.
L'implication~(i)$\Rightarrow$(ii) est \'evidente, de m\^eme
que l'implication (iii)$\Rightarrow$(iv).
Sous l'assertion~(i), la formule de Taylor
\[ \abs{f(z+w)-f(z)}=\abs{f'(z)}\abs w + \mathrm O(\abs w^2), \]
pour $z$ et $w\in\Bo(0,r)$, 
entra\^{\i}ne que $\abs{f'(z)}=1$ pour tout $z\in\Bo(0,r)$,
d'o\`u~(iii).

Supposons~(ii). Comme une s\'erie formelle n'est pas injective
au voisinage d'un point critique, $f'$ ne s'annule pas
sur~$\Bo(0,r)$. Le principe du maximum pour~$1/f'$
entra\^{\i}ne alors que $\abs{f'}$ est de module constant 
sur la boule~$\Bo(0,r)$, n\'ecessairement \'egal \`a~$1$.

Il reste \`a d\'emontrer que (iv) entra\^{\i}ne~(i). On observe
d'abord par le principe du maximum
que $\abs{f'(z)}=1$ pour tout $z\in\Bo(0,r)$. 
En particulier, $f'$ ne s'annule pas sur $\Bo(0,r)$. 
Reprenons la fonction $g(z)=f(z)/z$ pour $z\in\Bo(0,r)$ utilis\'ee
pour d\'emontrer le lemme de Schwarz.  On  a vu 
que $\abs{g(z)}\leq 1$ pour tout~$z\in\Bo(0,r)$. 
Puisque $\abs{g(0)}=\abs{f'(0)}=1$, le principe du maximum entra\^{\i}ne
de nouveau que $\abs{g(z)}=1$ pour tout $z\in\Bo(0,r)$, autrement
dit $\abs{f(z)}=\abs z$.  En appliquant ce raisonnement
\`a la s\'erie enti\`ere $f(z+w)$, pour $w\in\Bo(0,r)$ fix\'e, on 
en d\'eduit que $f$ est une isom\'etrie de~$\Bo(0,r)$.
\end{proof}

\subsection{Ensemble exceptionnel et bonne r\'eduction}

\begin{defi}
On dit qu'un point $z\in\BP(\C_p)$ est \emph{exceptionnel}
si la r\'eunion des $R^{-n}(z)$, pour $n\geq 0$, est un ensemble fini.
\end{defi}

Dans le cas complexe, on sait qu'il y a au plus deux points
exceptionnels. S'il y en a exactement un,
$R$ est conjugu\'ee \`a un polyn\^ome ; s'il y en a deux,
alors~$R$ est conjugu\'ee \`a~$z^d$ ou \`a~$z^{-d}$. 
Une d\'emonstration est possible  via la formule de Riemann--Hurwitz
(voir~\cite{Milnor:book}, lemma 4.9, ou \cite{Silverman:book}, theorem 1.6).
Elle s'\'etend \`a~$\P^1(\C_p)$ ($\C_p$ est de caract\'eristique $0$): 
\begin{prop}
Il y a au plus deux points exceptionnels dans~$\P^1(\C_p)$.
S'il y en a deux, $R$ est conjugu\'ee \`a~$z^d$ ou \`a~$z^{-d}$;
s'il y en a un seul, $R$ est conjugu\'ee \`a un polyn\^ome.
\end{prop}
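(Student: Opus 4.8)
The plan is to run the same argument as over~$\C$, based on the Riemann--Hurwitz formula. Since $\C_p$ is algebraically closed of characteristic zero, there is no wild ramification and Riemann--Hurwitz holds for $R\colon\P^1(\C_p)\to\P^1(\C_p)$ in its usual form: writing $e_w$ for the local degree of~$R$ at a point~$w$, one has $\sum_{w\in R^{-1}(\zeta)}e_w=d$ for every $\zeta\in\P^1(\C_p)$ and $\sum_{w}(e_w-1)=2d-2$, where $d=\deg R\ge 2$ is the standing assumption.

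First I would record an elementary fact: for any finite set $F\subseteq\P^1(\C_p)$ one has $\#R^{-1}(F)\ge\#F$, since the fibres over the points of~$F$ are non-empty and pairwise disjoint, with equality exactly when $R$ is totally ramified over each point of~$F$, i.e.\ every $\zeta\in F$ has a single preimage (necessarily with local degree~$d$). Now let $z$ be exceptional and $E=\bigcup_{n\ge 0}R^{-n}(z)$ its total backward orbit, a finite set by hypothesis. Since $R^{-1}(E)=\bigcup_{n\ge 1}R^{-n}(z)\subseteq E$, the elementary fact forces $R^{-1}(E)=E$ and $R$ to be totally ramified over every point of~$E$. Moreover every point of~$E$ is again exceptional, its own backward orbit being contained in~$E$.

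Next I would bound the exceptional set~$\mathcal E$. Given pairwise distinct exceptional points $z_1,\dots,z_k$, put $F=\bigcup_i\bigcup_{n\ge 0}R^{-n}(z_i)$; this is finite, and $R^{-1}(F)=F$ because taking preimages commutes with unions, so $R$ is totally ramified over each point of~$F$. Letting $w_\zeta$ be the unique preimage of $\zeta\in F$, the $w_\zeta$ are pairwise distinct with $e_{w_\zeta}=d$, so $2d-2=\sum_w(e_w-1)\ge\sum_{\zeta\in F}(d-1)=(\#F)(d-1)$, whence $\#F\le 2$ and $k\le 2$. Thus $\mathcal E$ is finite with $\#\mathcal E\le 2$; applying the elementary fact once more to $\mathcal E$ itself (which is backward invariant by the same reasoning) shows $R^{-1}(\mathcal E)=\mathcal E$, $R$ is totally ramified over each point of~$\mathcal E$, and $R$ permutes~$\mathcal E$.

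Finally I would identify~$R$. If $\mathcal E=\{a\}$ then $R$ fixes~$a$ with $R^{-1}(a)=\{a\}$; conjugating so that $a=\infty$, this says $\infty$ is the only pole and it has order~$d$, i.e.\ $R$ is a polynomial of degree~$d$. If $\mathcal E=\{a,b\}$, then $R$ either fixes both points or exchanges them. In the first case, conjugating so that $a=0$, $b=\infty$: $R^{-1}(\infty)=\{\infty\}$ makes $R$ a polynomial of degree~$d$ and $R^{-1}(0)=\{0\}$ forces $R(z)=cz^{d}$, which is conjugate to $z^{d}$ via a homothety (solving $\mu^{d-1}=c$ in~$\C_p$). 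In the second case, conjugating so that $R(0)=\infty$ and $R(\infty)=0$: then $0$ is the only pole, of order~$d$, and $\infty$ the only zero, so $R(z)=cz^{-d}$, conjugate to~$z^{-d}$. The main point to be careful about is not the counting but the preliminary structural step — deducing $R^{-1}(E)=E$ and the total ramification over~$E$ from the mere finiteness of a backward orbit, then assembling several backward orbits into one backward-invariant finite set — after which Riemann--Hurwitz, valid without correction terms precisely because $\mathrm{char}\,\C_p=0$, closes the argument at once.
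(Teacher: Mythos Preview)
Your proof is correct and follows exactly the approach the paper indicates: the paper does not spell out a proof but simply remarks that the classical argument via Riemann--Hurwitz (as in Milnor, lemma~4.9, or Silverman, theorem~1.6) carries over to~$\P^1(\C_p)$ because $\C_p$ has characteristic~$0$, and you have faithfully supplied those details. The structural step you highlight---passing from finiteness of the backward orbit to total invariance and total ramification, then feeding this into $\sum_w(e_w-1)=2d-2$---is precisely the standard argument the paper has in mind.
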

Comme dans le cas complexe,
on observe que ces points exceptionnels sont attractifs.
La nouveaut\'e provient donc des autres
points de l'espace hyperbolique $p$-adique:
\begin{prop}[Rivera-Letelier, \cite{Rivera-Letelier:CMH}, \S 7]
La coupure canonique est un point exceptionnel
si et seulement si la fraction rationnelle~$R$ a bonne r\'eduction.

Il y a au plus un point exceptionnel dans~$\Hp$ ; si un tel 
point existe, il est rationnel.
\end{prop}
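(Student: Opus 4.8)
Le plan est de ramener toute la proposition à une propriété du degré local de~$R_*$ aux coupures fixes ou périodiques. J'utiliserais les propriétés de~$R_*$ établies par Rivera-Letelier (cf. \S\ref{subsec.action-coupures} et l'annexe de \cite{Rivera-Letelier:compositio}): l'application $R_*\colon\BP(\C_p)\to\BP(\C_p)$ est ouverte, à fibres finies, préserve chacune des parties $\HpQ$, $\Hp^{\R\setminus\Q}$ et $\Hpsing$, et vérifie $\sum_{y\in R_*^{-1}(x)}\deg_y(R)=\deg(R)$ pour tout $x$; de plus $R_*$ est surjective, puisque $R$ l'est sur $\P^1(\C_p)$, que $\P^1(\C_p)$ est dense dans le compact $\BP(\C_p)$, et que $R_*$ est continue. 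La première étape est purement ensembliste: si $S$ est exceptionnel, le sous-ensemble fini $G=\bigcup_{n\geq 0}R_*^{-n}(S)$ vérifie $R_*^{-1}(G)\subset G$, et la surjectivité de~$R_*$ force $R_*^{-1}(G)=G$ (comparer les cardinaux), donc~$R_*$ induit une \emph{permutation} de~$G$. Ainsi tout $g\in G$ admet un unique antécédent par~$R_*$, lequel est de degré local $\deg(R)$ d'après la formule ci-dessus; en particulier $\deg_g(R)=\deg(R)$ pour tout $g\in G$, et $S$ est périodique: $R_*^m(S)=S$ pour un $m\geq 1$. Comme $S$ demeure exceptionnel pour~$R^m$, le même argument appliqué à~$R^m$ donne $\deg_S(R^m)=\deg(R)^m$.

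Ensuite je montrerais que tout point exceptionnel de~$\Hp$ est rationnel. Si $S$ était une coupure irrationnelle, $G$ serait contenu dans $\Hp^{\R\setminus\Q}$ et $S$ serait une coupure irrationnelle fixée par $(R^m)_*$; la proposition~\ref{pro:indifferent} donnerait $\deg_S(R^m)=1$, contredisant $\deg_S(R^m)=\deg(R)^m\geq 2$. Si $S$ était singulier, ce serait un point singulier fixé par $(R^m)_*$, donc indifférent d'après le \S\ref{subsec.action-coupures}, d'où encore $\deg_S(R^m)=1$: même contradiction. Donc $S\in\HpQ$.

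Le point le plus délicat, que j'énoncerais comme lemme auxiliaire, est: \emph{une fraction $g$ de degré $D\geq 2$ à bonne réduction ne peut fixer aucune coupure rationnelle $S\neq S_\can$ avec $\deg_S(g)=D$}. Pour l'établir, on peut conjuguer par un élément de $\PGL_2(\mathscr O)$ (qui stabilise $S_\can$ et préserve la bonne réduction ainsi que les degrés locaux) de sorte que la boule fermée associée à~$S$ soit $\Bf(a,r)$ avec $\bar a=0$ et $r<1$. L'égalité $g_*(S)=S$ impose $\bar g(0)=0$, et $\deg_S(g)=D$ force~$0$ à être un point totalement ramifié de~$\bar g$; la restriction de~$g$ à~$B(0)$ s'écrit alors $\sum_k c_kz^k$ avec $|c_D|=1$ et $|c_j|<1$ pour $j<D$. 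Le principe du maximum (cf. \S\ref{par:perio}) donne $g(\Bf(a,r))=\Bf(g(a),M)$ avec $M=\max_{j\geq 1}|d_j|r^j$, où $d_j$ désigne le $j$-ième coefficient du développement de~$g$ au point~$a$. Comme $|a|<1$ et $D\geq 2$, tous les termes de $d_1=g'(a)$ sont de valeur absolue $<1$, et un examen terme à terme donne $|d_j|\leq 1$ pour tout~$j$; puisque $r<1$, on en déduit $M<r$. Donc $g_*(S)$ est une coupure strictement plus profonde que~$S$, en contradiction avec $g_*(S)=S$. J'attends cette étape comme le véritable obstacle; le reste est soit formel, soit déjà contenu dans le texte.

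Il resterait à conclure. Si~$R$ a bonne réduction, $\bar R\circ\pi=\pi\circ R$ entraîne $R^{-1}(B(\alpha))=\bigsqcup_{\bar R(\beta)=\alpha}B(\beta)$; donc $R_*(S_\can)=S_\can$ et $\deg_{S_\can}(R)=\deg(R)$, d'où $R_*^{-1}(S_\can)=\{S_\can\}$, puis $R_*^{-n}(S_\can)=\{S_\can\}$ pour tout~$n$: la coupure canonique est exceptionnelle. Réciproquement, si $S_\can$ est exceptionnelle, la première étape fournit $\deg_{S_\can}(R)=\deg(R)$ et la $m$-périodicité de~$S_\can$. Si $R_*(S_\can)=S_\can$, c'est exactement dire que~$R$ a bonne réduction (\S\ref{par:reduction}). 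Sinon $m\geq 2$ et $T=R_*(S_\can)$ est une coupure de~$\HpQ$ distincte de~$S_\can$, exceptionnelle, fixée par $(R^m)_*$ avec $\deg_T(R^m)=\deg(R)^m=\deg(R^m)$; or $R^m$ a alors bonne réduction (car $(R^m)_*(S_\can)=S_\can$ et $\deg_{S_\can}(R^m)=\deg(R^m)$) tout en fixant la coupure rationnelle $T\neq S_\can$ avec degré local maximal, ce qu'interdit le lemme auxiliaire. Donc $R_*(S_\can)=S_\can$ et~$R$ a bonne réduction. Enfin, s'il existait deux points exceptionnels distincts $S_1,S_2$ de~$\Hp$, ils seraient rationnels et périodiques; pour~$m$ multiple commun de leurs périodes, $R^m$ fixerait $S_1$ et~$S_2$ avec degré local $\deg(R^m)$. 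En conjuguant~$R^m$ par un $g\in\PGL_2(\C_p)$ envoyant~$S_1$ sur~$S_\can$, on obtiendrait une fraction à bonne réduction fixant la coupure rationnelle $g(S_2)\neq S_\can$ avec degré local maximal — contradiction avec le lemme auxiliaire. Il y a donc au plus un point exceptionnel dans~$\Hp$, et il est rationnel.
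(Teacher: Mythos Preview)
The paper does not give its own proof of this proposition: it is stated with a citation to Rivera-Letelier (\cite{Rivera-Letelier:CMH}, \S7) and followed only by commentary on its consequences. So there is nothing in the text to compare your argument to; it has to be judged on its own.

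Your argument is correct and well organised. The opening set-theoretic step is clean: finiteness of $G=\bigcup_{n\geq 0}R_*^{-n}(S)$ together with surjectivity and the degree formula force $R_*$ to permute~$G$ with $\deg_g(R)=\deg(R)$ for every $g\in G$, so $S$ is periodic and totally invariant for some iterate. The exclusion of irrational and singular exceptional points via Prop.~\ref{pro:indifferent} and the corresponding fact for singular fixed points is exactly the intended use of those results. The final paragraph (equivalence with good reduction for~$S_\can$, and uniqueness in~$\Hp$) follows correctly from the auxiliary lemma.

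One step in the auxiliary lemma deserves an extra line. You write that ``$\deg_S(g)=D$ force $0$ \`a \^etre un point totalement ramifi\'e de $\bar g$''. This is true, but $\deg_S(g)$ is the local degree at the cut~$S$, not at~$S_\can$, so it does not speak directly about~$\bar g$. The bridge is: $\deg_S(g)=D$ together with $g_*(S)=S$ gives $g_*^{-1}(S)=\{S\}$; if some $\beta\neq 0$ satisfied $\bar g(\beta)=0$, then (by good reduction) $g\colon B(\beta)\to B(0)$ is surjective, and the analogue of Prop.~\ref{prop:boules} for convergent power series shows that $g^{-1}(\Bf(a,r))\cap B(\beta)$ contains a closed ball, hence a second preimage of~$S$ under~$g_*$ --- contradiction. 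Once this is said, your power-series computation is correct: from $|c_j|<1$ for $j<D$ and $|c_D|=1$ one gets $|d_1|=|g'(a)|<1$ (using $|a|<1$ and $D\geq 2$) and $|d_j|\leq 1$ for $j\geq 2$, whence $M=\max_{j\geq 1}|d_j|r^j<r$ and $g_*(S)\neq S$.
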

S'il y a un point exceptionnel suppl\'ementaire,
on peut conjuguer~$R$ de sorte que ce soit la coupure canonique;
alors $R$ a bonne r\'eduction.
Autrement dit, il y a un point exceptionnel suppl\'ementaire~$S$
si et seulement si la fraction rationnelle~$R$ est simple.
En outre, ce point exceptionnel est r\'epulsif
car $\deg_SR=\deg R>1$ ; un tel point ce situe donc dans l'ensemble
de Julia, tandis que les points exceptionnels de $\P^1(\C_p)$ appartiennent
\`a l'ensemble de Fatou. 

\begin{ex}\label{ex:polsimple}
Soit $P\in\C_p[z]$ un polyn\^ome de degr\'e au moins~$2$. D\'emontrer
que les propri\'et\'es suivantes sont \'equivalentes:
\begin{enumerate}
\item $P$ est conjugu\'e \`a une fraction rationnelle ayant bonne r\'eduction;
\item $P$ poss\`ede un point fixe totalement invariant dans~$\HpQ$;
\item $P$ est conjugu\'e \`a un polyn\^ome ayant bonne r\'eduction.
\end{enumerate}
\end{ex}

%%%
\subsection{Dynamique des applications ayant bonne r\'eduction}\label{par:bonne-reduction}
%%%

Passons maintenant \`a l'\'etude des applications ayant bonne r\'eduction. 

\subsubsection*{Dynamique sur $\P^1(\C_p)$} 

Soit $R$ une fraction rationnelle de degr\'e~$\geq 2$ ayant bonne
r\'eduction. Soit $\F_q$ un sous-corps fini de~$\overline{\F_p}$
contenant les coefficients de la r\'eduction~$\overline R$ de~$R$.
La dynamique de~$\overline R$ sur~$\P^1(\overline{\F_p})$ est assez
simple, puisque tout point est pr\'ep\'eriodique. En effet,  si $\alpha$
est un tel point, il existe un sous-corps fini~$F$ de~$\overline{\F_q}$
contenant~$\F_p$ tel que $\alpha\in\P^1(F)$; alors, 
$\P^1(F)$ est un ensemble fini stable par~$\overline R$, en particulier,
l'orbite de~$\alpha$ par~$\overline R$ est finie.

Si $\pi$ d\'esigne l'application de r\'eduction de~$\P^1(\C_p)$
sur~$\P^1(\overline{\F_p})$, on a $\pi\circ R = \overline R\circ\pi$.
Plus pr\'ecis\'ement, la th\'eorie du polygone de Newton
entra\^{\i}ne que l'image $R(B(\alpha))$ de la boule~$B(\alpha)$ par~$R$
est \'egale \`a~$B(\bar{R}(\alpha))$ et l'application
$R:B(\alpha)\to B(\bar{R}(\alpha))$
est de degr\'e $\deg_\alpha(\bar R)$.
\`A chaque point p\'eriodique $\alpha$ de $\bar R$ 
correspond en particulier une boule $B(\alpha)$ qui est p\'eriodique pour~$R$,
au sens o\`u un it\'er\'e convenable~$R^k$ de~$R$ applique $B(\alpha)$ sur elle-m\^eme.
D'apr\`es le lemme~\ref{lemm.schwarz},
la dynamique de~$R^k$ en restriction \`a cette boule est isom\'etrique 
ou attractive. La proposition suivante pr\'ecise cela. 

\begin{prop}[voir~\cite{Rivera-Letelier:Asterisque},
prop.~4.32]\label{pro:br2}
Soit $R\in \C_p(z)$ une fraction rationnelle ayant bonne r\'eduction.
Soit $\alpha$ un point de  $\P^1(\bar{\F}_p)$ qui est p\'eriodique de p\'eriode~$k$.

\begin{enumerate}
\item Pour que la boule $B(\alpha)$ contienne un point p\'eriodique attractif 
de p\'eriode~$k$, il faut et il suffit que 
$(\bar{R}^k)'(\alpha) = 0$.

\item
Si $z$ est un tel point, 
$B(\alpha)$ est la plus grande boule centr\'ee en~$z$ 
qui soit contenue dans le bassin d'attraction de~$z$.

\item Si, au contraire,  $(\bar R^k)'(\alpha)\neq 0$, 
la boule $B(\alpha)$ est contenue dans le domaine
de quasi-p\'eriodicit\'e\,\footnote
 {Le domaine de quasi-p\'eriodicit\'e est d\'efini ci-dessous au paragraphe~\ref{par:FatouQP}.} 
de~$R$ et contient un point p\'eriodique indiff\'erent de p\'eriode~$k$.
\end{enumerate} 
\end{prop}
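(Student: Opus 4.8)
Le plan est de se ramener au cas $k=1$ et $\alpha=0$, puis de tout déduire de l'analyse du développement de~$R$ en série entière en ce point fixe. Quitte à remplacer $R$ par son itérée~$R^k$ (qui a encore bonne réduction, avec $\overline{R^k}=\bar R^k$) et à conjuguer par un élément de $\PGL_2(\mathscr O)$ (transformations qui préservent la bonne réduction et ne changent ni le type d'un point périodique ni la valeur $\bar R'(\alpha)$), on se ramène au cas où $\alpha=0$ est un point \emph{fixe} de~$\bar R$, de sorte que $B(\alpha)=\Bo(0,1)$ ; on gardera à l'esprit qu'un point périodique de~$R$ situé dans~$B(\alpha)$ est alors de période exactement~$k$ pour l'application initiale, son image par~$\pi$ étant~$\alpha$. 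Par bonne réduction, $R$ n'a pas de pôle sur~$\Bo(0,1)$ et envoie cette boule sur $B(\bar R(0))=\Bo(0,1)$ ; elle y est donc donnée par une série entière convergente $R(z)=\sum_{n\ge 0}a_nz^n$ avec $|a_0|<1$ (car $\bar R(0)=0$), $|a_n|\le 1$ pour tout~$n$ et $\max_{n\ge 1}|a_n|=1$, et $\overline{a_1}=\bar R'(0)$.

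Le cœur de l'argument est l'analyse des points fixes de~$R$ dans~$B(\alpha)$. On pose $F(z)=R(z)-z=\sum_n c_nz^n$ et l'on note $l\ge 1$ l'ordre d'annulation en~$0$ de la fonction $\bar F(z)=\bar R(z)-z$ (ordre fini puisque $\bar R$ n'est pas l'identité, son degré étant~$\ge 2$). On a $|c_l|=1$ et $|c_n|<1$ pour $n<l$, donc, d'après la formule $\sup_{\Bf(0,s)}|F|=\max_n|c_n|s^n$, ce supremum est atteint \emph{uniquement} à l'indice~$l$ dès que $s<1$ est assez proche de~$1$ ; la théorie du polygone de Newton entraîne alors que $F$ a exactement~$l$ zéros dans~$\Bf(0,s)$, donc exactement~$l$ dans~$\Bo(0,1)=B(\alpha)$. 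En particulier $R$ possède un point fixe $z^*\in B(\alpha)$, avec $|z^*|<1$. Comme tout terme de degré~$\ge 1$ de la série~$R'$ évalué en~$z^*$ est de valeur absolue~$\le|z^*|<1$, on obtient $|R'(z^*)|=|a_1|$ lorsque $|a_1|=1$, et $|R'(z^*)|<1$ lorsque $|a_1|<1$ : le point~$z^*$ est attractif si $\bar R'(\alpha)=0$ et indifférent si $\bar R'(\alpha)\neq 0$. Enfin, si $|a_1|=1$, la même estimation ultramétrique donne $|R(z)-R(w)|=|z-w|$ pour tous $z,w\in\Bo(0,1)$, de sorte que $R$ induit un automorphisme isométrique de~$B(\alpha)$ (cf. le lemme~\ref{lemm.schwarz}).

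On en déduit~(1) : si $\bar R'(\alpha)=0$, le point~$z^*$ est un point fixe attractif dans~$B(\alpha)$ ; réciproquement, si $\bar R'(\alpha)\neq 0$, tout point fixe de~$R$ dans~$B(\alpha)$ est un zéro de~$F$ dans~$\Bo(0,1)$, donc de valeur absolue~$<1$, donc indifférent, et $B(\alpha)$ ne contient aucun point périodique attractif de période~$k$. De même~(3) (cas $\bar R'(\alpha)\neq 0$) : le point~$z^*$ est le point périodique indifférent de période~$k$ annoncé, et, $R|_{B(\alpha)}$ étant une isométrie, ses itérés forment une famille équicontinue ; $B(\alpha)$ est donc contenue dans l'ensemble de Fatou, et la dynamique y étant isométrique, dans le domaine de quasi-périodicité (\S\ref{par:FatouQP}). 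Pour la première assertion de~(2) (cas $\bar R'(\alpha)=0$, d'où $|a_1|<1$ et $m:=\deg_\alpha\bar R\ge 2$), on vérifie, pour $|a_0|<s<1$, que $R(\Bf(0,s))\subseteq\Bf(0,s)$ et que $R$ y est $\lambda$-lipschitzienne avec $\lambda=\max\bigl(\max_{1\le j<m}|a_j|s^{j-1},\,s^{m-1}\bigr)<1$ ; pour $z_0\in B(\alpha)$ fixé, en choisissant $s$ tel que $\max(|z_0|,|z^*|,|a_0|)<s<1$, on obtient $R^n(z_0)\to z^*$, de sorte que $B(\alpha)=\Bo(z^*,1)$ est contenue dans le bassin d'attraction de~$z^*$.

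Reste la maximalité dans~(2), seul endroit où l'analyse en série entière ne suffit pas. Toute boule centrée en~$z^*$ contenant strictement $B(\alpha)=\Bo(0,1)$ contient $\mathscr O=\Bf(0,1)=\bigcup_{\beta\in\bar{\F_p}}B(\beta)$. Or, comme $\bar R'(\alpha)=0\neq 1$, le point~$\alpha$ est un zéro \emph{simple} de $\bar R^2(z)-z$ ; et $\bar R^2$, de degré $(\deg\bar R)^2\ge 4$, a $(\deg\bar R)^2+1\ge 5$ points fixes comptés avec multiplicité, donc $\bar R$ admet un point périodique~$\gamma\neq\alpha$. Alors $B(\gamma)$ est périodique pour~$R$, son orbite évite~$B(\alpha)$, et aucun point de~$B(\gamma)$ ne converge vers~$z^*$ : $\mathscr O$ n'est pas contenu dans le bassin de~$z^*$, et $B(\alpha)$ est donc la plus grande boule centrée en~$z^*$ contenue dans ce bassin. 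Les deux passages délicats sont le décompte des zéros de~$F$ par le polygone de Newton (en se souvenant que, sur~$B(\alpha)$, $R$ est une série entière et non un polynôme) et la production du point périodique auxiliaire~$\gamma$ ; c'est le décompte des zéros qui devrait être le point le plus technique.
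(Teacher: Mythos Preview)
Ta démonstration est correcte et plus complète que celle du texte pour les points~(1) et~(2), mais elle emprunte une voie différente. Pour~(1), le papier procède en deux temps très courts : le sens direct vient de ce que $(\bar R^k)'(\alpha)$ est la réduction de $(R^k)'(z)$ (une ligne), et le sens réciproque utilise le théorème du point fixe de Banach appliqué à $R^k$ sur une boule $\Bf(0,r)$ avec $\abs{a_0}<r<1$, où $R^k$ est $c$-lipschitzienne avec $c=\max_{n\ge 1}\abs{a_n}r^{n-1}<1$. Tu remplaces ces deux arguments par une analyse unique au polygone de Newton de $F=R-z$, qui donne à la fois l'existence du point fixe et la valeur du multiplicateur ; c'est plus calculatoire mais tout aussi valable, et cela te fournit en prime le décompte exact des points fixes dans~$B(\alpha)$. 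Pour la maximalité dans~(2), le papier invoque un argument plus souple : toute boule contenant strictement~$B(\alpha)$ rencontre une infinité de boules~$B(\beta)$ avec $\beta$ prépériodique pour~$\bar R^k$ et dont l'orbite évite~$\alpha$ (il n'y a qu'un nombre fini de~$\beta$ dont l'orbite passe par~$\alpha$, car $\bar R$ est à fibres finies) ; cela évite de produire explicitement ton point~$\gamma$. Ta construction de~$\gamma$ marche bien, à un détail près : rien n'exclut $\gamma=\infty$, auquel cas $B(\gamma)\not\subset\mathscr O$ ; il suffit d'observer que $\bar R^2$ a au moins cinq points fixes dont au plus un est~$\infty$ et~$\alpha$ est simple, ce qui laisse au moins un~$\gamma\in\bar{\F_p}\setminus\{\alpha\}$.

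Le seul vrai point faible de ta proposition est l'assertion~(3), que tu liquides par {\og la dynamique y étant isométrique, dans le domaine de quasi-périodicité\fg}. Ce passage d'{\og isométrie\fg} à {\og quasi-périodicité\fg} n'est pas évident : il faut montrer que pour une isométrie analytique bijective de~$\Bo(0,1)$, une sous-suite $R^{n_i}$ converge uniformément vers l'identité, ce qui ne découle pas de l'équicontinuité seule (la boule n'est pas compacte dans~$\C_p$). Le papier ne le démontre pas non plus et renvoie à Rivera-Letelier ; tu devrais faire de même, ou au minimum signaler cette étape comme le véritable point délicat (bien plus que le polygone de Newton ou la production de~$\gamma$, qui sont en fait routiniers).
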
 

\begin{rem}
Cette proposition peut-\^etre compl\'et\'ee par le th\'eor\`eme suivant, que nous 
ne d\'emontrerons pas. {\emph{
 Soit $R\in \C_p(z)$ une fraction rationnelle. Si l'on munit $\P^1(\C_p)$
 de la distance sph\'erique, $R$ est lipschitzienne de rapport~$ \Delta(R)^{-2} $.
%% acl- suppression de la formule centree
 Si $R$ a bonne r\'eduction, $R:\P^1(\C_p)\to \P^1(\C_p)$ 
 est donc~$1$-lipschitzienne et tous ses points p\'eriodiques sont donc attractifs ou indiff\'erents}} (voir~\cite{Morton-Silverman:Crelle}). 
\end{rem}

\begin{proof}[\'El\'ements de d\'emonstration]
Soit $z$ un point p\'eriodique attractif de p\'eriode~$k$ dans~$B(\alpha)$ ; 
alors $\bar z=\alpha$ est un point p\'eriodique pour~$\bar R$ 
dont la p\'eriode divise~$k$.
Comme $(\bar R^k)'(\bar z)$ est la r\'eduction de $(R^k)'(z)$,
on a $(\bar R^k)'(\alpha)=0$.

Soit $B$ une boule centr\'ee en~$z$ qui n'est pas contenue
dans~$B(\alpha)$. La r\'eduction de~$B$ contient une infinit\'e de points
$\beta$ de~$\P^1(\overline{\F_p})$ pour lesquels la suite
$(\bar R^k)(\beta)$ est pr\'ep\'eriodique et ne contient pas~$\alpha$.
Pour un tel $\beta$, la boule~$B(\beta)$ est donc disjointe
du bassin d'attraction de~$z$.

Supposons, inversement, que $(\bar R^k)'(\alpha)=0$ et
d\'emontrons que $R^k$ poss\`ede un unique point fixe dans~$B(\alpha)$.
Quitte \`a conjuguer par une homographie de~$\PGL_2(\mathscr O)$,
on suppose $\alpha=0$ et on \'ecrit $R^k=a_0+a_1z+\dots$ le d\'eveloppement
en s\'erie enti\`ere de~$R^k$. Il est de rayon de convergence
au moins \'egal \`a~$1$ car $R^k$ n'a pas de p\^ole dans la boulue~$B(\alpha)=\Bo(0,1)$.
Comme $\bar{R^k}(\alpha)=\alpha$,
on a $\abs{a_0}<1$; comme $\bar{R^k}'(\alpha)=0$, $\abs{a_1}<1$. 
En outre, $\sup_{\Bo(0,1)}\abs {R^k}=\max(\abs{a_n})\leq 1$ 
car $R^k$ a bonne r\'eduction.
Soit $r$ un nombre r\'eel tel que $\abs{a_0}<r<1$. Posons $c=\max_{n\geq 1}\abs{a_n}r^{n-1}$; on a $c<1$.
D'apr\`es le lemme~\ref{lemm.schwarz}, $R^k$ est $c$-lipschitzienne
sur la boule~$\Bf(0,r)$ et $\abs{R^k(z)}\leq \max(cr,r)\leq r$
pour tout $z\in\Bf(0,r)$. Par le th\'eor\`eme du point fixe,
il en r\'esulte que $R^k$ poss\`ede un unique
point fixe~$z_0$ dans~$\Bf(0,r)$ et, par cons\'equent, un unique
point fixe, attractif, dans~$B(\alpha)$. 
Plus g\'en\'eralement, on voit que $z$ est l'unique point
p\'eriodique de~$R^k$ dans~$B(\alpha)$.
%% acl - modifications de la d\'emonstration

Nous renvoyons \`a~\cite{Rivera-Letelier:Asterisque}
pour le reste de la d\'emonstration.
\end{proof}

\subsubsection*{Dynamique sur $\Hp$} 

D\'ecrivons la dynamique de~$R$ sur~$\Hp$.
La coupure canonique $S_\can$ est totalement invariante 
et les branches issues de $S_\can$, en bijection
avec $\P^1(\overline{\F_p})$, sont toutes pr\'ep\'eriodiques. 
Si $S$ est un \'el\'ement de $\Hp$, 
on a expliqu\'e au~\S\ref{subsec.action-coupures}
que
\[
\dist(R_*(S),S_\can)\geq \dist(S,S_\can).
\]
Si le degr\'e de~$R$ sur la branche contenant~$S$ 
est strictement plus grand que~$1$, alors, au voisinage de $S_\can$, $R_*$ 
dilate strictement la g\'eod\'esique  reliant $S_\can$ \`a $S$ ; 
dans ce cas, l'in\'egalit\'e est donc stricte.
%Serge J'ai chang\'e suivant la remarque du rapporteur.
% Il y a d'ailleurs qqch \`a dire sur l'injectivit\'e de R_* le long de la g\'eod\'esique reliant S_\can \`a S ....
La figure \ref{fig:attractif} illustre ce cas lorsque
la branche est fix\'ee par~$R$ ; 
la boule qui d\'efinit cette branche contient donc un point
fixe attractif (cf. la proposition~\ref{pro:br2}). 

\begin{figure}[t]
\centering
\includegraphics{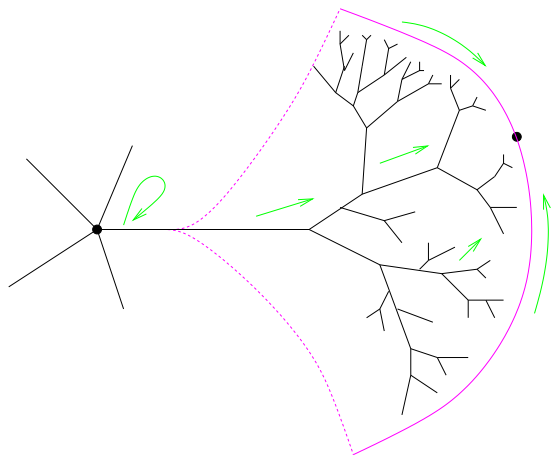} 
\caption{\textsc{Bassin d'attraction}:
Bassin d'attraction d'un point pour une
transformation rationnelle qui a bonne r\'eduction : 
$R(S_\can)=S_\can$ ; ici, le cycle
attractif est r\'eduit \`a un point fixe et la branche 
{\og pointant vers ce point\fg} est donc fix\'ee.}
\label{fig:attractif}
\end{figure}

\subsubsection*{R\'eduction ins\'eparable et cycles attractifs} 
On dit que la r\'eduction d'une fraction rationnelle~$R$ est {\emph{ins\'eparable}} 
lorsque la d\'eriv\'ee de~$\bar R$ est identiquement nulle.
Cela revient \`a dire que $\bar R$ 
est de la forme $Q(z^p)$ pour un \'el\'ement~$Q$ de $\bar{\F_p}(z)$.

\begin{prop}\label{pro:br}
Soit $R\in \C_p(z)$ une fraction rationnelle ayant bonne r\'eduction.
\begin{enumerate}
\item Si la r\'eduction de ${R}$ est ins\'eparable, alors $p$ divise le degr\'e de~$R$ et tout 
point p\'eriodique est attractif. 
\item Si $\bar{R}'$ n'est pas identiquement nulle, tout cycle attractif de~$R$ 
attire un point critique, si bien que le nombre de cycles attractifs est major\'e 
par $2\deg (R) -2$. 
\end{enumerate}
\end{prop}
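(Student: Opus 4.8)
The plan is to reduce every assertion to the combinatorics of~$\bar R$ on~$\P^1(\bar{\F_p})$, using $\pi\circ R^n=\bar R^n\circ\pi$, the fact that for a map with good reduction the reduction commutes with differentiation (so $\overline{R'(z)}=\bar R'(\bar z)$), and, above all, Proposition~\ref{pro:br2}. For~(1): if $\bar R=Q(z^p)$ with $Q\in\bar{\F_p}(z)$ non constant, then $\deg\bar R=p\deg Q$ because $\bar{\F_p}(z)/\bar{\F_p}(z^p)$ has degree~$p$, so $\deg R=\deg\bar R$ is divisible by~$p$. Now let $z_0$ be a periodic point of~$R$ of period~$m$ and $\alpha=\pi(z_0)$, periodic for~$\bar R$ of some period~$m'\mid m$. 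Since $\bar R'\equiv 0$ the chain rule gives $(\bar R^{m'})'(\alpha)=0$, so Proposition~\ref{pro:br2} furnishes in~$B(\alpha)$ an attractive periodic point which is the \emph{only} periodic point of~$R^{m'}$ there; since $z_0\in B(\alpha)$ is a periodic point of~$R^{m'}$ (because $m'\mid m$), it must be that point, hence is attractive.

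For~(2), fix an attractive cycle of~$R$ and an attractive periodic point $z_0$ in it, of period~$m$; set $\alpha=\pi(z_0)$, periodic for~$\bar R$ of period~$m'\mid m$. From $\abs{(R^m)'(z_0)}<1$ and the chain rule at the $\bar R^{m'}$-fixed point~$\alpha$ one gets $(\bar R^{m'})'(\alpha)^{m/m'}=(\bar R^m)'(\alpha)=\overline{(R^m)'(z_0)}=0$, so $(\bar R^{m'})'(\alpha)=0$. Putting $g=R^{m'}$ (which again has good reduction, with $\bar g=\bar R^{m'}$), Proposition~\ref{pro:br2} shows that $z_0$ is the unique periodic point of~$g$ in~$B(\alpha)$, that $g(z_0)=z_0$ with $\abs{g'(z_0)}<1$, and that $B(\alpha)$ is contained in the basin of attraction of~$z_0$; in particular $\bar g'(\alpha)=\overline{g'(z_0)}=0$, i.e. $\bar g$ is ramified at~$\alpha$. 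Here is the crucial step: since $\bar R$ is separable so is $\bar g$, whence $(\bar R^{m'})'=\overline{(R^{m'})'}$ is not the zero power series, although its value at~$\alpha$ is~$0$. Conjugating by an element of~$\PGL_2(\mathscr O)$ so that $z_0=\alpha=0$ and $B(\alpha)=\Bo(0,1)$, the series $g'(z)=\sum_{n\geq 1}na_nz^{n-1}$ thus has coefficients in~$\mathscr O$ (good reduction of~$g$), constant term $a_1\in\mathfrak m$, and at least one coefficient $na_n$ with $p\nmid n$ a unit; the theory of the Newton polygon then forces $g'$ to vanish at some~$c\in\Bo(0,1)$. By the chain rule $R^i(c)$ is a critical point of~$R$ for some~$0\leq i<m'$, and, $c$ lying in~$B(\alpha)$ which is contained in the basin of~$z_0$, this critical point is attracted to the cycle. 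Thus every attractive cycle attracts a critical point of~$R$.

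For the bound, observe that two distinct attractive cycles cannot reduce to periodic orbits of~$\bar R$ sharing a ball~$B(\alpha)$: such a ball would contain two distinct periodic points of~$R^{m'}$ (with~$m'$ the $\bar R$-period of~$\alpha$), one from each cycle, contradicting the uniqueness clause of Proposition~\ref{pro:br2} (applicable since $(\bar R^{m'})'(\alpha)=0$ as above). Hence distinct attractive cycles give disjoint periodic orbits of~$\bar R$, so the critical points produced above lie in pairwise disjoint residue balls and are in particular distinct. Since $\C_p$ has characteristic zero, $R$ has exactly $2\deg(R)-2$ critical points in~$\P^1(\C_p)$ by Riemann--Hurwitz, whence there are at most $2\deg(R)-2$ attractive cycles.

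I expect the main obstacle to be the lemma isolated in the middle of~(2): that a map with good reduction whose reduction is \emph{separable} and ramified at a residue point~$\alpha$ genuinely possesses a critical point inside~$B(\alpha)$. This is precisely where separability is indispensable --- for $z\mapsto z^p$ the residue disk over a non-zero fixed point contains no critical point at all, which is why such maps may admit infinitely many attractive cycles --- and it is the only step that really needs the Newton-polygon machinery; everything else is bookkeeping with $\pi\circ R=\bar R\circ\pi$, the chain rule, and Proposition~\ref{pro:br2}.
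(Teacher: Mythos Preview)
Your proof is correct and takes essentially the same approach as the paper's: for~(1) both defer to Proposition~\ref{pro:br2}, and for~(2) both conjugate the attractive periodic point to~$0$, expand the iterate as an integral power series, use separability of~$\bar R$ (hence of its iterates) to obtain a unit coefficient in the derivative, and apply the Newton polygon to locate a critical point inside~$\Bo(0,1)$. Your version is more detailed---you distinguish the $R$-period~$m$ from the $\bar R$-period~$m'$ (your own argument in fact forces $m=m'$), spell out the chain rule passage from a critical point of~$R^{m'}$ to one of~$R$, and justify the bound via disjointness of residue balls---but the core argument is identical to the paper's.
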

\begin{proof}
Par bonne r\'eduction,
le degr\'e de~$R$ est \'egal \`a celui de~$\bar R$.
Puisque $\bar R$ est de la forme $Q(z^p)$, $\deg (R)$  
est un multiple de~$p$.
Le reste de l'assertion~(1) d\'ecoule alors de la proposition pr\'ec\'edente.
(Voir aussi~\cite{Rivera-Letelier:CMH}, prop.~6.4.)
L'assertion~(2) est le point~(iv) de la proposition~4.32
de~\cite{Rivera-Letelier:Asterisque}; recopions ce qu'\'ecrit Rivera-Letelier. 
Soit $w$ un point p\'eriodique attractif de p\'eriode $k$. Quitte \`a conjuguer
$R$ par une homographie $H\in \PSL(2,\mathscr O)$, nous supposons que $w=0$. 
D\'eveloppons alors $R^k$ en s\'erie enti\`ere au voisinage de $0$, 
\[
R^k(z)= \sum_{n=1}^\infty a_n z^n.
\]
Puisque $R$ a bonne r\'eduction, $\abs{a_n} \leq 1$ pour tout entier $n$,
et comme $0$ est attractif, $\abs{a_1} < 1$ ; comme $\bar R'$ n'est pas 
identiquement nulle, il existe un indice $i$ tel que $\abs{a_i} = 1$.
La th\'eorie du polygone de Newton montre alors qu'il existe un point~$z$ 
de valeur absolue $\abs{z}<1$ tel que $(R^k)'(z)=0$. Puisque $\abs{z}<1$,
$z$ est un point critique attir\'e par notre cycle attractif. 
\end{proof}

\begin{theo}[\cite{Rivera-Letelier:CMH}, th\'eor\`eme~2]
Une fraction rationnelle a un et un seul point p\'eriodique dans $\Hp$ si, 
et seulement si, apr\`es changement de coordonn\'ee, elle a bonne r\'eduction 
ins\'eparable. 
\end{theo}

Dans ce cas, tous les points p\'eriodiques de la fraction rationnelle
dans $\P^1(\C_p)$ sont attractifs (cf. le point~(1) 
de la proposition~\ref{pro:br}), 
et l'unique point fixe dans $\Hp$ est r\'epulsif.

\subsection{Ensembles de Julia et de Fatou}\label{par:JuliaFatou}

On appelle \emph{ensemble de Julia} de~$R$ et on note $\Jul(R)$
l'adh\'erence pour la topologie faible de l'ensemble des points
p\'eriodiques r\'epulsifs de~$\BP(\C_p)$.

\begin{theo}[Rivera-Letelier]
Soit $R\in \C_p(z)$ une transformation rationnelle de degr\'e
au moins~$2$. Alors, $R$ poss\`ede au moins un point fixe r\'epulsif 
dans~$\BP(\C_p)$. De plus, les propri\'et\'es suivantes sont \'equivalentes.
\begin{enumerate}
\item $R$ ne poss\`ede aucun point p\'eriodique dans $\Hp$;
\item $R$ n'a qu'un nombre fini de points p\'eriodiques non r\'epulsifs dans $\P^1(\C_p)$; 
\item $R$ poss\`ede un seul point p\'eriodique non r\'epulsif dans $\P^1(\C_p)$.
\end{enumerate} 
Si ces propri\'et\'es sont satisfaites, 
l'unique point p\'eriodique non r\'epulsif de~$R$ dans~$\P^1(\C_p)$ est 
un point fixe attractif.
\end{theo}

\begin{rem}
Rivera-Letelier montre aussi
qu'une fraction rationnelle $R\in \C_p(z)$ de degr\'e $\geq 2$ 
a n\'ecessairement $0$, $1$, ou une infinit\'e de
points p\'eriodiques dans $\Hp$.
Les deux th\'eor\`emes pr\'ec\'edents caract\'erisent donc les fractions rationnelles 
qui n'ont qu'un nombre fini d'orbites p\'eriodiques dans $\Hp$. \end{rem}

Nous retiendrons en particulier du th\'eor\`eme pr\'ec\'edent que
\emph{l'ensemble de Julia n'est pas vide.} En revanche, il peut ne 
contenir aucun point de~$\P^1(\C_p)$.
En effet, on a $\Jul(R)=\{S_\can\}$ lorsque $R$ a bonne r\'eduction
(voir le \S \ref{par:bonne-reduction}). Plus g\'en\'eralement, lorsque $R$ a un 
point exceptionnel~$S$ dans $\Hp$, $\Jul(R)$ est r\'eduit au 
singleton~$\{ S\}$. 

On d\'efinit l'\emph{ensemble de Fatou} de~$R$ comme
le compl\'ementaire de l'ensemble de Julia~$\Jul(R)$; on le note~$\Fat(R)$. 
Comme en dynamique holomorphe,
on peut d\'emontrer que l'ensemble de Fatou est 
le plus grand ouvert sur lequel la suite~$(R^n)$ 
est une famille normale. 
Nous renvoyons le lecteur \`a~\cite{Hsia:London} et~\cite{Silverman:book},
\S 5.4, 5.7 et 5.10.3.2 pour plus de d\'etails.

Par d\'efinition, les points
p\'eriodiques r\'epulsifs sont dans l'ensemble de Julia.
D'apr\`es les r\'esultats du paragraphe~\ref{par:perio},
la dynamique au voisinage d'un point p\'eriodique 
indiff\'erent est isom\'etrique; par suite, les points
p\'eriodiques indiff\'erents appartiennent \`a l'ensemble de Fatou, 
et il en est de m\^eme des points p\'eriodiques attractifs.
D'apr\`es le th\'eor\`eme suivant, de tels points p\'eriodiques existent toujours.
\begin{theo}[Benedetto, \cite{Benedetto:these}]
Toute transformation rationnelle $R\in \C_p(z)$ de degr\'e
au moins~$2$ poss\`ede un point fixe dans~$\P^1(\C_p)$
qui est attractif ou indiff\'erent. 
\end{theo}
Par cons\'equent, \emph{l'ensemble de Fatou~$\Fat(R)$ n'est pas vide.}
Ce r\'esultat va  \`a l'encontre de ce qui est
valable pour la dynamique complexe puisque, sur $\C$, 
l'ensemble de Fatou peut \^etre vide et il n'existe pas 
toujours de point p\'eriodique attractif ou indiff\'erent (voir par
exemple les {\og exemples de Latt\`es \fg}, \cite{Cantat:ER}, \S 7, 11 et 12).

\begin{proof}[D\'emonstration (voir aussi \cite{Benedetto:these}, \S 2.4.4)]
Soit $d\geq 2$ le degr\'e de~$R$ et
notons  $z_0,\dots,z_{d}$ les $d+1$ 
points fixes de~$R$ r\'ep\'et\'es suivant leur multiplicit\'e.
Pour $i\in\{0,\dots,d\}$, soit $\lambda_i=R'(z_i)$ 
le multiplicateur de~$R$ au point $z_i$.
Si tous les $\lambda_i$ sont diff\'erents de~$1$, alors les $z_i$
sont distincts et 
\begin{equation} \label{eq.lef}
\sum_{i=0}^{d}\frac{1}{\lambda_i-1}=1.
\end{equation}
Dans le cas complexe, c'est-\`a-dire pour $R\in \C(z)$ et 
$z_i \in \P^1(\C)$, cette \'egalit\'e r\'esulte de la formule de Lefschetz
holomorphe, qui peut \^etre d\'emontr\'ee facilement \`a l'aide du th\'eor\`eme des
r\'esidus (voir~\cite{Milnor:book}, \S 12, et~\cite{Griffiths-Harris:book}, \S 3.4).
Mais il s'agit en fait d'une identit\'e alg\'ebrique
que l'on peut d\'emontrer directement 
(voir l'exercice~\ref{exo:lef}).
Elle est donc valable dans notre cas.

Supposons maintenant par l'absurde que la valeur absolue de
chaque multiplicateur $\lambda_i$ soit strictement
sup\'erieure \`a~$1$. Dans ce cas, l'in\'egalit\'e ultram\'etrique
entra\^{\i}ne tout d'abord que
\[
\abs{\frac{1}{\lambda_i-1}}= \frac{1}{\abs{\lambda_i}} < 1
\]
pour tout indice~$i$, puis que la somme des inverses des
$\abs{\lambda_i -1}$ est elle-m\^eme strictement inf\'erieure
\`a~$1$. Ceci contredit la formule de Lefschetz, et montre donc le r\'esultat annonc\'e. 
\end{proof}

\begin{ex}\label{exo:lef}
Soit $R$ une fraction rationnelle de degr\'e~$d$,
qu'on \'ecrit $P/Q$, o\`u $P$ et $Q$ sont des polyn\^omes premiers
entre eux.
On suppose pour simplifier que $\deg (Q)=d$, c'est-\`a-dire
que $\infty$ n'est pas un point fixe de~$R$.
Les points fixes de~$R$ sont alors les $d+1$ z\'eros,
disons du polyn\^ome $zQ(z)-P(z)$; nous les noterons $z_0,\dots,z_d$. 

1) Si $z_i$ est un point fixe multiple, d\'emontrer
que son multiplicateur $R'(z_i)$ est \'egal \`a~$1$.

2) Si les~$z_i$ sont distincts deux \`a deux, \'ecrire
la formule d'interpolation de Lagrange pour le polyn\^ome
$Q$ et en d\'eduire la formule~\ref{eq.lef}.
\end{ex}

\medskip

L'ensemble de Julia  et  l'ensemble de Fatou sont totalement
invariants :
\[
R^{-1}(\Jul(R))=\Jul(R), \quad R^{-1}(\Fat(R))=\Fat(R).
\] 
Ils jouissent des propri\'et\'es suivantes.
La premi\`ere proposition est \`a opposer au cas complexe, puisque l'ensemble
de Julia peut, dans ce cas, \^etre \'egal \`a~$\P^1(\C)$ tout entier.
\begin{prop} Soit $R$ un \'el\'ement de $\C_p(z)$ de degr\'e~$\geq 2$.
Alors
\begin{enumerate}
\item l'ensemble de Julia est d'int\'erieur vide ;
\item si $R$ n'a pas de point exceptionnel,  $\Jul(R)$ n'a pas
de point isol\'e ;
\item l'ensemble de Fatou $\Fat(R)$ est  ouvert et dense.
\end{enumerate}
\end{prop}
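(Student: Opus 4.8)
Le plan est le suivant. D'abord, la troisième assertion résulte formellement des deux autres : l'ensemble $\Fat(R)=\BP(\C_p)\setminus\Jul(R)$ est ouvert puisque $\Jul(R)$, défini comme une adhérence, est fermé ; et dire que $\Fat(R)$ est dense revient exactement à dire que $\Jul(R)$ est d'intérieur vide. Il ne restera donc qu'à établir les deux premières assertions. On utilisera constamment le fait que $\Jul(R)$ et $\Fat(R)$ sont totalement invariants --- d'où en particulier $R(\Jul(R))=\Jul(R)$, car $R$ est surjective sur $\BP(\C_p)$ --- et le fait, déjà démontré (théorème de Benedetto), que $\Fat(R)\neq\varnothing$.

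Pour l'intérieur vide, je raisonnerais par l'absurde en supposant que $\Jul(R)$ contienne un ouvert non vide. Les parties $\widehat A$, $A$ parcourant les affinoïdes ouverts, formant une base de la topologie faible et vérifiant $\widehat A\cap\P^1(\C_p)=A$, l'ensemble $\Jul(R)$ contiendrait un tel $\widehat A$, donc l'affinoïde ouvert non vide $A$, et par suite une boule ouverte $B$ de $\P^1(\C_p)$. D'autre part, $\Fat(R)$ étant un ouvert non vide de $\BP(\C_p)$ et $\P^1(\C_p)$ y étant dense, $\Fat(R)\cap\P^1(\C_p)$ est un ouvert non vide de $\P^1(\C_p)$, donc contient une boule ouverte $B_0$, laquelle est disjointe de $\Jul(R)$. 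Comme $B\subset\Jul(R)$ et $R(\Jul(R))\subset\Jul(R)$, on aurait $R^n(B)\subset\Jul(R)\cap\P^1(\C_p)$ pour tout $n\geq0$, donc $R^n(B)$ éviterait le disque \emph{fixe} $B_0$. La suite $(R^n|_B)$ serait ainsi une famille d'applications de la boule $B$ vers $\P^1(\C_p)$ dont les images évitent toutes une même boule ouverte ; le critère de normalité $p$-adique de Hsia (voir~\cite{Hsia:London}) entraînerait son équicontinuité sur $B$ pour la distance sphérique, d'où $B\subset\Fat(R)$, ce qui contredirait $B\subset\Jul(R)$.

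Pour l'absence de point isolé, on suppose désormais que $R$ n'a aucun point exceptionnel. Je remarquerais d'abord que $\Jul(R)$ est infini : sinon, $\Jul(R)$ étant non vide, un point $\zeta\in\Jul(R)$ vérifierait $\bigcup_{n\geq0}R^{-n}(\zeta)\subset\Jul(R)$ par invariance totale, cet ensemble serait fini, et $\zeta$ serait exceptionnel, contrairement à l'hypothèse. Supposons ensuite qu'un point $\xi\in\Jul(R)$ soit isolé dans $\Jul(R)$. Comme $\Jul(R)$ est l'adhérence de l'ensemble des points périodiques répulsifs, $\xi$ est limite de tels points, et étant isolé il en est lui-même un ; quitte à remplacer $R$ par un itéré --- ce qui ne modifie ni $\Jul(R)$, ni l'absence de point exceptionnel ---, on peut supposer que $\xi$ est un point fixe répulsif. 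Choisissons un voisinage ouvert $U$ de $\xi$ tel que $U\cap\Jul(R)=\{\xi\}$. Le point décisif est alors d'invoquer les propriétés de dilatation au voisinage d'un point fixe répulsif (Rivera-Letelier, \cite{Rivera-Letelier:CMH}), selon lesquelles $\bigcup_{n\geq1}R^n(U)$ est égal à $\BP(\C_p)$ privé d'un nombre fini de points. Si $y\in\Jul(R)$ n'est pas l'un de ces points, on l'écrit $y=R^n(x)$ avec $x\in U$ ; alors $x\in R^{-n}(\Jul(R))=\Jul(R)$, d'où $x\in U\cap\Jul(R)=\{\xi\}$ puis $y=R^n(\xi)=\xi$. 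Ainsi $\Jul(R)$ serait fini, ce qui contredirait le point précédent.

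Le principal obstacle --- et ce qui rend cette proposition véritablement \emph{non} élémentaire --- réside dans les deux ingrédients importés de la littérature : le théorème de Montel $p$-adique de Hsia pour la deuxième étape, et la propriété de dilatation au voisinage d'un point fixe répulsif pour la troisième. Tout le reste se ramène à une manipulation formelle de l'invariance totale, de la densité des points périodiques répulsifs, et de la non-vacuité de l'ensemble de Fatou.
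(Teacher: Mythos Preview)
Le texte ne fournit aucune démonstration de cette proposition : elle est simplement énoncée, comme la suivante, et le lecteur est implicitement renvoyé aux références (Hsia, Silverman, Rivera-Letelier, Baker--Rumely). Il n'y a donc pas d'argument du papier à comparer au vôtre ; j'évalue votre preuve sur ses propres mérites.

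L'assertion~(3) découle effectivement de~(1) et de la définition, et votre démonstration de~(1) via le théorème de Montel $p$-adique de Hsia est l'argument standard et correct. Notez tout de même que la conclusion {\og $(R^n|_B)$ normale $\Rightarrow B\subset\Fat(R)$\fg} utilise l'identification de l'ensemble de Fatou avec le domaine d'équicontinuité, identification que le texte énonce mais ne démontre pas ; c'est cohérent avec votre renvoi à~\cite{Hsia:London}.

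Pour~(2), le schéma logique est correct, mais l'attribution est glissante. Ce que vous invoquez --- {\og $\bigcup_{n\geq 1} R^n(U)$ est $\BP(\C_p)$ privé d'un nombre fini de points\fg} --- n'est pas une conséquence directe de la \emph{dilatation locale} au voisinage d'un point fixe répulsif : celle-ci garantit seulement que de petites boules autour de~$\xi$ sont envoyées sur des boules plus grandes, et ce tant que l'on reste dans le domaine de linéarisation. Le recouvrement global est exactement le contenu de la proposition \emph{suivante} du texte (transitivité topologique sur l'ensemble de Julia), qui est un résultat indépendant et non trivial. Votre argument est donc juste mais repose sur cette proposition ultérieure ; il faudrait soit la citer explicitement (avec une référence précise à Rivera-Letelier ou Baker--Rumely pour sa preuve), soit s'assurer qu'on ne l'établit pas elle-même à partir de~(2) --- ce qui créerait une circularité.
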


La seconde proposition est l'analogue d'une propri\'et\'e bien connue de
dynamique holomorphe (\cite{Milnor:book}, theorem 4.10).

\begin{prop}
Soit~$V$ un ouvert pour la topologie faible
qui rencontre~$\Jul(R)$ mais ne contient pas de point
exceptionnel.
Il existe alors un entier~$N>0$ tel que $R^{N}(V)$
contienne~$\Jul(R)$ et la r\'eunion des~$R^n(V)$,
pour $n\geq 0$, est le compl\'ementaire des points exceptionnels
dans~$\BP(\C_p)$.
\end{prop}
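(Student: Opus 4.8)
Le plan est de reproduire la démonstration de l'énoncé analogue en dynamique holomorphe (\cite{Milnor:book}, théorème~4.10), l'argument classique de familles normales étant remplacé par son pendant non archimédien dû à Rivera-Letelier. On commencerait par deux réductions. Comme $V$ est un ouvert faible qui rencontre l'adhérence des points périodiques répulsifs de $\BP(\C_p)$, il contient un tel point $x_0$, qui n'est pas exceptionnel par hypothèse ; quitte à remplacer $R$ par un itéré $R^k$ — ce qui ne change ni $\Jul(R)$ ni l'ensemble $E$ des points exceptionnels — on peut supposer que $x_0$ est un point fixe répulsif. Par ailleurs $E\subset\P^1(\C_p)$ : si $R$ possédait un point exceptionnel $S$ dans $\Hp$, on aurait $\Jul(R)=\{S\}$ (\S\ref{par:bonne-reduction}), donc $S\in V$, ce qui est exclu. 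Ainsi $E$ est fini, formé de points attractifs, et en particulier $\Jul(R)\cap E=\emptyset$.

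Le coeur de la preuve consisterait à établir la seconde assertion, à savoir l'égalité $W=\BP(\C_p)\setminus E$ où $W:=\bigcup_{n\ge 0}R^n(V)$. L'application $R_*$ étant ouverte pour la topologie faible (\cite{Rivera-Letelier:CMH}), $W$ est un ouvert faible, donc son complémentaire $Y$ est compact, et il vérifie $R^{-1}(Y)\subset Y$ (si $R(y)\in W$ alors $y\in W$) ; de plus $W\cap E=\emptyset$, car pour $e\in E$ l'ensemble $\bigcup_{m\ge 0}R^{-m}(e)$ est fini, formé de points exceptionnels, donc disjoint de $V$, et $e\in R^n(V)$ équivaudrait à $R^{-n}(e)\cap V\ne\emptyset$. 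Il resterait à voir que $Y\subset E$ : si $y\in Y$ n'était pas exceptionnel, la réunion $O^-(y)=\bigcup_{n\ge 0}R^{-n}(y)$ serait infinie et, par $R^{-1}(Y)\subset Y$, contenue dans $Y$ ; or l'adhérence faible de $O^-(y)$ contient $\Jul(R)$, ce qui contredirait $x_0\in V\cap\Jul(R)\subset W$. On aurait alors $Y=E$, donc $W=\BP(\C_p)\setminus E$. C'est l'inclusion $\Jul(R)\subset\overline{O^-(y)}$ qui est le point délicat : elle repose sur l'analogue $p$-adique du théorème de Montel, essentiellement équivalent à la transitivité de la dynamique sur l'ensemble de Julia, pour lequel on renverrait à \cite{Rivera-Letelier:compositio} et \cite{Favre-Rivera-Letelier:2008}.

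Pour la première assertion, on en déduirait l'existence d'un seul itéré. Comme $x_0$ est un point fixe répulsif, le lemme~\ref{lemm.schwarz} et les propriétés de dilatation du \S\ref{subsec.action-coupures} fournissent un voisinage ouvert faible $V_0\subset V$ de $x_0$ tel que $R(V_0)\supset V_0$ — par exemple, si $x_0\in\P^1(\C_p)$, l'image d'une boule $\Bo(x_0,s)$ de petit rayon est la boule strictement plus grande $\Bo(x_0,\abs\lambda s)$, avec $\lambda=R'(x_0)$ et $\abs\lambda>1$. La suite $\bigl(R^n(V_0)\bigr)_n$ est alors croissante et, d'après le point précédent appliqué à $V_0$ (qui rencontre encore $\Jul(R)$ et évite $E$), sa réunion vaut $\BP(\C_p)\setminus E$, laquelle contient le compact $\Jul(R)$. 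Un compact recouvert par une suite croissante d'ouverts étant contenu dans l'un d'eux, il existe $N$ tel que $\Jul(R)\subset R^N(V_0)\subset R^N(V)$ ; et l'on a bien $\bigcup_{n\ge 0}R^n(V)=\BP(\C_p)\setminus E$, l'inclusion $\supset$ venant des $R^n(V_0)$ et l'inclusion $\subset$ ayant été obtenue plus haut.

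Le principal obstacle est ainsi concentré dans la deuxième étape : c'est la version non archimédienne des théorèmes de familles normales sur l'arbre de Berkovich (et le fait que $R_*$ y soit une application ouverte) qui fait l'essentiel du travail, tandis que la compacité de $\BP(\C_p)$, l'invariance totale de $\Jul(R)$ et les propriétés élémentaires de dilatation locale (lemmes~\ref{lemm.schwarz} et~\ref{lemm.boules}) ne présentent pas de difficulté.
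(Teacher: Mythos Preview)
Le texte ne d\'emontre pas cette proposition : il se contente de la pr\'esenter comme l'analogue $p$-adique du th\'eor\`eme~4.10 de~\cite{Milnor:book}, sans en donner de preuve. Il n'y a donc pas de d\'emonstration du papier \`a laquelle confronter la v\^otre ; votre plan est pr\'ecis\'ement l'adaptation naturelle de l'argument complexe, et il est globalement correct.

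Deux remarques de d\'etail. D'une part, la justification parenth\'etique de $R^{-1}(Y)\subset Y$ est invers\'ee : ce que l'on a, c'est que $W=\bigcup_{n\ge 0}R^n(V)$ est stable par~$R$ (si $y\in R^n(V)$ alors $R(y)\in R^{n+1}(V)\subset W$), dont le contrapos\'e donne bien $R^{-1}(Y)\subset Y$ ; votre \og si $R(y)\in W$ alors $y\in W$\fg{} est l'implication oppos\'ee et n'a aucune raison d'\^etre vraie. D'autre part, l'existence d'un voisinage $V_0$ de~$x_0$ tel que $V_0\subset R(V_0)$ est claire lorsque $x_0\in\P^1(\C_p)$ (lemme~\ref{lemm.boules}), mais demande un mot de plus si $x_0\in\HpQ$ : il faut invoquer l'action r\'esiduelle du \S\ref{subsec.action-coupures} pour voir que $R$ envoie un petit $\widehat A$ autour de~$x_0$ sur un $\widehat A$ plus grand.

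Vous identifiez correctement le point d\'elicat --- l'inclusion $\Jul(R)\subset\overline{O^-(y)}$ pour $y$ non exceptionnel --- et vous avez raison de le renvoyer aux travaux de Rivera-Letelier et Favre--Rivera-Letelier : c'est effectivement l\`a que r\'eside l'analogue non archim\'edien du th\'eor\`eme de Montel, que le texte n'\'etablit pas non plus.
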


\subsection{Structure de l'ensemble de Fatou et domaine de quasi-p\'eriodicit\'e}\label{par:FatouQP}

Soit $R$ une fraction rationnelle de degr\'e au moins~$2$.
On dit qu'un point~$z$ de~$\BP(\C_p)$ 
est \emph{r\'ecurrent} si l'adh\'erence de son orbite contient~$z$.
On appelle  \emph{domaine de quasi-p\'eriodicit\'e}~$\whE(R)$
l'int\'erieur de l'ensemble des points r\'ecurrents pour la topologie faible. 
Il est donc contenu dans l'ensemble de Fatou.
De plus, il v\'erifie les propri\'et\'es suivantes 
(\cite{Rivera-Letelier:Asterisque}, \S 4.2 et \S 5):
\begin{enumerate}
\item
c'est un ensemble invariant, $R_*(\whE(R))=\whE(R)$,
et la restriction de~$R$ \`a~$\whE(R)$
d\'efinit un hom\'eomorphisme de~$\whE(R)$ sur lui-m\^eme 
(mais, en g\'en\'eral,  $\whE(R)$ n'est pas totalement invariant) ;

\item  pour tout entier $n>0$, on a $\whE(R)=\whE(R^n)$ ;

\item l'intersection de~$\whE(R)$ et de~$\P^1(\C_p)$
est form\'e des points $z_0\in\P^1(\C_p)$
tels qu'il existe une boule ouverte~$B$ contenant~$z_0$
et une suite~$(n_i)$ d'entiers tels que $R^{n_i}$ converge
uniform\'ement vers l'identit\'e sur~$B$ ;

\item en particulier, l'ensemble~$\whE(R)$  
contient tout point p\'eriodique 
indiff\'erent de~$\P^1(\C_p)$ (voir le paragraphe~\ref{par:perio}) ;

\item une coupure $S_0$ appartient \`a~$\whE(R)$
si et seulement s'il existe un entier~$N>0$ et un nombre
r\'eel~$\delta>0$ tel que $R^N_*(S)=S$ pour toute coupure~$S$
telle que $\dist(S,S_0)<\delta$.
\end{enumerate}

\begin{eg} Si $R(z)= p^3 z^2 + z$, de sorte que $\bar R (z)= z$,
alors $S_\can$ appartient \`a~$ \whE(R)$ et satisfait la propri\'et\'e~(5) ci-dessus.
\end{eg}

\subsection{Composantes errantes}

Soit $C$ une composante connexe de l'ensemble de Fatou~$\Fat(R)$.
Alors, $R(C)$ est une composante connexe de~$\Fat(R)$
et $R^{-1}(C)$ est une r\'eunion finie de composantes connexes de~$\Fat(R)$.

Dans le cas complexe, Sullivan a d\'emontr\'e
que les composantes connexes de~$\Fat(R)$
sont pr\'ep\'eriodiques; autrement dit, {\og l'ensemble de Fatou n'a pas
de composante errante\fg} 
(voir~\cite{Milnor:book}, appendix F et les r\'ef\'erences qui s'y trouvent). 
En revanche, dans le cas~$p$-adique,
Benedetto a exhib\'e une famille de polyn\^omes dont l'ensemble de Fatou 
poss\`ede des composantes errantes
(voir~\cite{Benedetto:2002}). Il s'agit de polyn\^omes
\`a coefficients dans $\C_p$ (pas dans $\bar{\Q}_p$), que nous d\'ecrirons
au paragraphe \ref{par:Benedetto}. 

\begin{theo}[Rivera-Letelier]
Toute composante connexe p\'eriodique de~$\Fat(R)$ est 
soit contenue dans un bassin d'attraction d'une orbite
p\'eriodique attractive, soit une composante connexe du domaine de 
quasi-p\'eriodicit\'e.
\end{theo}

Rappelons que dans le cas complexe, une composante p\'eriodique de l'ensemble
de Fatou est soit un bassin d'attraction 
d'un point fixe attractif ou parabolique,
soit un disque de Siegel, soit un anneau d'Herman 
(voir~\cite{Milnor:book}, \S 15).
Les composantes connexes de $\whE(R)$ sont les analogues  
des disques de Siegel et anneaux d'Herman.

Le th\'eor\`eme suivant utilise la notion d'affino\"{\i}de ouvert 
et les notations introduites au paragraphe \ref{par:affinoide}.

\begin{theo}[Rivera-Letelier] Soit~$C$  une composante
 connexe de $\whE(R)$.  Alors $C$ est p\'eriodique. 
De plus, si $C$ est fixe, il existe un affino\"{\i}de ouvert 
$A\subset\P^1(\C_p)$ tel que 
\begin{enumerate}
\item $R(A)=A$, $\widehat A=C$ et $A$ est maximal dans~$\whE(R)$ ;
\item les extr\'emit\'es $S_i$ de $\widehat A$  sont des points p\'eriodiques \emph{r\'epulsifs} de~$R$ ; en particulier, les $S_i$ sont des coupures
rationnelles (voir la proposition \ref{pro:indifferent}).
\end{enumerate}
\end{theo}

Nous renvoyons le lecteur au chapitre~5 de~\cite{Rivera-Letelier:Asterisque}
pour les d\'emonstrations.
%%%%%%%%%%%%%%%%%%%%%%%%%%%%%%%%%%%%%%%
%%%%%%%%%%%%%%%%%%%%%%%%%%%%%%%%%%%%%%%

\section{Le cas des polyn\^omes}\label{par:poly}

%%%%%%%%%%%%%%%%%%%%%%%%%%%%%%%%%%%%%%%
%%%%%%%%%%%%%%%%%%%%%%%%%%%%%%%%%%%%%%%

Dans cette partie, nous pr\'ecisons la dynamique sur $\P^1(\C_p)$
induite par un polyn\^ome, puis nous d\'ecrivons les exemples de Benedetto. 
Soit~$P$ un \'el\'ement de $\C_p[z]$ de degr\'e $d>1$, que  nous \'ecrirons 
\[
P(z) = \sum_{i=0}^{d} a_i z^i.
\]

\subsection{Polyn\^omes ayant bonne r\'eduction}

Supposons que le polyn\^ome~$P$ a bonne r\'eduction, c'est-\`a-dire que
$\abs{a_d}= \max_i \abs{a_i}=1$ (voir l'exercice \ref{ex.pol-br})
et notons $\bar P$ sa r\'eduction.

D'apr\`es l'\'etude
de la dynamique des fractions rationnelles
ayant bonne r\'eduction,
 expos\'ee au paragraphe \ref{par:bonne-reduction},
l'ensemble de Julia est r\'eduit \`a~$\{ S_\can\}$ et 
deux cas peuvent se pr\'esenter 
suivant que le polyn\^ome~$\bar P$ est ins\'eparable ou non.

Lorsque le polyn\^ome $\bar P $ est  ins\'eparable (c'est-\`a-dire
lorsque sa d\'eriv\'ee est identiquement nulle), 
toutes les orbites p\'eriodiques de~$P$ situ\'ees dans $\P^1(\C_p) $ 
sont attractives. Le domaine de quasi-p\'eriodicit\'e est vide
et l'ensemble de Fatou est constitu\'e de bassins d'attraction.

Inversement, lorsque le polyn\^ome $\bar P$ est s\'eparable,
il n'existe qu'un nombre fini d'orbites p\'eriodiques attractives et toutes
les autres sont indiff\'erentes.

\subsection{Codage et hyperbolicit\'e} 

Le point \`a l'infini est un point fixe exceptionnel de~$P$.

\'Ecrivons~$P$ sous la forme 
\[
P(z) = z + a_d\prod_{i=1}^d (z-z_i)
\]
o\`u les $z_i$ sont les points fixes de~$P$ dans~$\C_p$, r\'ep\'et\'es
selon leur multiplicit\'e.  

Si $\abs{a_d}=1$ et $\abs{z_i}\leq 1$ pour tout~$i$,
alors $P$ a bonne r\'eduction.
Plus g\'en\'eralement, 
le changement de variables
 $z'=\lambda z$
transforme~$P$ en un polyn\^ome ayant bonne r\'eduction
d\`es que $\lambda$ v\'erifie $\abs{\lambda}^{d-1}\abs{a_d}=1$
et $\abs{z_i/\lambda}\leq 1$ pour tout~$i$.

Si $P$ n'a qu'un seul point fixe dans~$\C_p$, on peut supposer que
c'est~$0$, quitte \`a conjuguer~$P$ par une translation;
on en conclut que $P$ est conjugué au polynôme~$z+z^d$,
qui a bonne réduction.

Dans la suite, nous supposons que
\emph{le polyn\^ome~$P$ n'est pas conjugu\'e \`a un polyn\^ome
ayant bonne r\'eduction.} D'apr\`es l'exercice~\ref{ex:polsimple},
$P$ n'est  pas non plus conjugu\'e \`a une fraction rationnelle 
ayant bonne r\'eduction.
Le polynôme~$P$ a donc au moins deux points fixes;
quitte à conjuguer~$P$ par une homothétie, 
on peut donc supposer que 
$\max(\abs{z_i})= 1$. On a alors $\abs{a_d}>1$.

\medskip

On appelle \emph{ensemble de Julia rempli} de~$P$
l'ensemble $\JulK(P)$ des points de~$\C_p$ dont l'orbite est born\'ee.
C'est un ensemble totalement invariant de~$\C_p$.
Pour tout $z\in\C_p$ tel que  $\abs{z}>1$,   on a
$\abs{z-z_i}=\abs z$ pour tout~$i$, d'o\`u 
$\abs{P(z)}= \abs{a_d}\abs{z}^d$. Par suite, $P^n(z)\ra\infty$
si $\abs z>1$.
Cela entra\^{\i}ne que $\JulK(P)$ est contenu dans~$\Bf(0,1)$,
donc aussi dans~$P^{-n}(\Bf(0,1))$ pour tout entier~$n\geq 1$.
Puisque l'intersection des $P^{-n}(\Bf(0,1))$ est contenue dans
$\JulK(P)$, nous obtenons l'\'egalit\'e 
\begin{equation}
\JulK(P) = \bigcap_{n\geq 1}P^{-n}(\Bf (0,1)).
\end{equation}

D'apr\`es la proposition~\ref{prop:boules}, 
$P^{-n}(\Bf (0,1))$ est la r\'eunion d'une famille finie~$\Sigma_n$ 
de boules ferm\'ees deux \`a deux disjointes. 

Si~$B$ appartient \`a~$\Sigma_n$, il existe une unique boule~$B'$ 
appartenant \`a~$\Sigma_{n-1}$ qui contient~$B$ ; nous la noterons $i(B)$.
On d\'efinit ainsi une application $i:\Sigma_n\to \Sigma_{n-1}$. 
Soit $\Sigma_\infty$ la limite projective des $\Sigma_n$ 
(suivant les applications~$i$) ; 
un \'el\'ement $B_\infty$ de $\Sigma_\infty$ est donc une suite~$(B_n)$ de boules embo\^{\i}t\'ees,
o\`u $B_n\in \Sigma_n$ pour tout~$n$.

\`A toute boule $B\in \Sigma_n$ sont associ\'es deux nombres : 
\begin{enumerate}
\item le diam\`etre de~$B$, not\'e $\diam (B)$ ; c'est un \'el\'ement de $p^\Q$.
\item le degr\'e de $P_{\vert B}: B\to P(B)$, not\'e $\deg (B)$ ; c'est un entier inf\'erieur ou \'egal \`a~$\deg (P)$.
\end{enumerate}
Soit $B_\infty$ un point de $\Sigma_\infty$, 
d\'efini par une suite  $(B_n)$  de boules embo\^{\i}t\'ees,
avec $B_n\in\Sigma_n$ pour tout~$n$. 
Les suites $(\diam (B_n))$ et $(\deg (B_n))$ convergent
car elles sont d\'ecroissantes. Nous noterons $\diam(B_\infty)$
et $\deg(B_\infty)$ leurs limites.

Par ailleurs, si $B$ appartient \`a~$\Sigma_n$, alors $P(B)$ 
est une boule qui appartient \`a~$\Sigma_{n-1}$ ; 
si l'on note encore~$P$ l'application de~$\Sigma_n$ dans~$\Sigma_{n-1}$ ainsi
d\'efinie, on a la relation 
\[
P\circ i = i \circ P.
\]
Par cons\'equent,
le polyn\^ome~$P$ induit une application de~$ \Sigma_\infty $ 
dans lui-m\^eme, toujours not\'ee~$P$.

Soit $z$ un point de~$\JulK(P)$.
Pour tout entier $n\geq 1$,  $z\in P^{-n}(\Bf(0,1))$
donc il existe une unique boule~$B\in\Sigma_n$ telle que $z\in B$.
Notons $j_n(z)$ cette boule. On a $i(j_n(z))=j_{n-1}(z)$;
on peut donc d\'efinir $j(z)$ comme \'etant l'\'el\'ement $(j_n(z))$
de $\Sigma_\infty$.

\begin{rem}\label{rema.j-1}
Rappelons qu'\`a toute boule ferm\'ee ou irrationnelle~$B\subset\C_p$
est associ\'ee une coupure de~$\BP(\C_p)$,
correspondant \`a la semi-norme~$\nu_B(\cdot)=\sup_B \abs{\cdot} $ 
de l'espace de Berkovich  de~$\P^1({\C_p})$.
Alors, \`a une suite~$B_\infty=(B_n)$ de boules embo\^{\i}t\'ees 
correspond la semi-norme~$\nu_{B_\infty}=\inf \nu_{B_n}$ de~$\P^1_{\Ber}(\C_p)$
dont la nature d\'epend de l'intersection~$\bigcap B_n$.

On a  $j^{-1}(B_\infty)=\bigcap B_n$.
Lorsque $\diam(B_\infty)=0$, le
th\'eor\`eme des ferm\'es embo\^{\i}t\'es affirme que l'intersection des boules en
question est un singleton de~$\C_p$; c'est l'unique ant\'ec\'edent de~$B_\infty$
par~$j$.
En revanche, lorsque $\diam(B_\infty)>0$, deux cas sont possibles:
si l'intersection $\bigcap B_n$ est vide, 
le point $\nu_{B_\infty}$ est un point singulier de~$\BP(\C_p)$
et $B_\infty$ n'a pas d'ant\'ec\'edent par~$j$;
si cette intersection n'est pas vide, c'est une boule
ferm\'ee ou irrationnelle,  $\nu_{B_\infty}$ est le point correspondant
de~$\BP(\C_p)$, et tout point de cette boule est un ant\'ec\'edent de 
$B_\infty$ par~$j$.

En particulier, le lecteur prendra garde au fait que $B_\infty$ d\'esigne un point
de $\Sigma_\infty$, et pas une boule.
\end{rem}

\begin{defi}
On dit qu'un point $B_\infty$ de $\Sigma_\infty$ est un point critique si 
$\deg(B_\infty)>1$.
\end{defi}

\begin{rem}\label{rem:pt_crit}
1) Soit $z\in\C_p$ un point critique de~$P$ situ\'e dans~$\JulK(P)$ ; alors $B_\infty=j(z)$
est un point critique de~$\Sigma_\infty$. En effet, la restriction
de~$P$ \`a toute boule centr\'ee en~$z$ est de degr\'e~$\geq 2$.
%Antoine: c -> z

2) Inversement, soit $B_\infty\in\Sigma_\infty$ un point critique.
Si $\diam(B_\infty)=0$, 
$j^{-1}(B_\infty)$ est un point critique de~$P$. 
En revanche, si $\diam(B_\infty)>0$,
il est possible que $j ^{-1}(B_\infty)$ ne contienne pas de point critique,
m\^eme si l'intersection de la suite $(B_n)$ d\'efinissant~$B_\infty$ 
n'est pas vide.  
\end{rem}

L'application $j:\JulK(P)\to \Sigma_\infty$
n'est en g\'en\'eral ni injective ni surjective. 
Elle s'ins\`ere cependant dans un diagramme commutatif
\[
\xymatrix {
\JulK(P) \ar[r]^j \ar[d]^P & \Sigma_\infty  \ar[d]^P \\
\JulK(P) \ar[r]^j & \Sigma_\infty }
\]
qui fournit un mod\`ele symbolique (partiel, si $j$ n'est
pas un hom\'eomorphisme) pour la dynamique de~$P$ sur~$\JulK(P)$.

\begin{defi}
On dit que le polyn\^ome~$P$ est \emph{hyperbolique de type Cantor}
s'il existe~$c>1$ et~$n\geq 1$ 
tels que 
\[
\abs{(P^n)'(z)}\geq c
\]
pour tout point~$z$ de l'ensemble $\JulK(P)$. 
\end{defi}

\begin{rem}La terminologie provient de l'analogie avec le cas complexe:
si la propri\'et\'e analogue est v\'erifi\'ee,
$P$ est \emph{expansive} sur son ensemble de Julia
rempli, et est donc \emph{hyperbolique} ;  les points critiques sont alors
attir\'es par l'infini et l'ensemble de Julia
est un ensemble de Cantor (voir par exemple \cite{Beardon:Book}, \S 9.8). 
\end{rem}

Supposons que $P$ soit hyperbolique de type Cantor.
Dans ce cas, les diam\`etres des boules $B_n$
tendent vers $0$, si bien que $\JulK(P)$ est compact
et que $j$ est une application bijective (voir par
exemple \cite{Bezivin:2004b}, propositions~15 et~16).
L'application $j$ est donc un hom\'eomorphisme qui conjugue
la dynamique de $P$ sur $\JulK(P)$ \`a celle du d\'ecalage
$P:\Sigma_\infty \to \Sigma_\infty$. 

\begin{prop}
Soit $P\in \C_p[z]$ un polyn\^ome de degr\'e sup\'erieur ou \'egal \`a~$2$. 
Les deux propri\'et\'es suivantes sont \'equivalentes. 
\begin{enumerate}\def\theenumi{\roman{enumi}}\def\labelenumi{(\theenumi)}
\item Le polyn\^ome~$P$ est hyperbolique de type Cantor.
\item Le polyn\^ome~$P$ n'est pas simple et $\Sigma_\infty$ ne contient pas
de point critique.
\end{enumerate} 
Lorsqu'elles sont v\'erifi\'ees, 
l'application~$j$ est un hom\'eomorphisme et la dynamique de~$P$ est
conjugu\'ee par~$j$ \`a celle d'un \emph{d\'ecalage}.
\end{prop}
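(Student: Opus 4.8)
L'idée est de démontrer les deux implications séparément en exploitant le dictionnaire entre les diamètres des boules de $\Sigma_\infty$, leur degré local, et le module $\abs{(P^n)'}$ le long de $\JulK(P)$. On se place toujours dans la situation normalisée du paragraphe précédent : $\max(\abs{z_i})=1$, $\abs{a_d}>1$, et $P$ n'est pas simple (donc, d'après l'exercice~\ref{ex:polsimple} et la remarque \emph{a priori}, $P$ n'a pas de point fixe totalement invariant dans $\HpQ$ ; en particulier $S_\can\notin\JulK$-image).

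\emph{Première implication, (i)$\Rightarrow$(ii).} Supposons $P$ hyperbolique de type Cantor : il existe $c>1$ et $n\geq 1$ avec $\abs{(P^n)'(z)}\geq c$ pour tout $z\in\JulK(P)$. D'abord, $P$ n'est pas simple : si $P$ était conjugué à une fraction à bonne réduction, d'après l'exercice~\ref{ex:polsimple} il serait conjugué à un polynôme à bonne réduction, mais dans ce cas $\Bf(0,1)$ est totalement invariante et la restriction de $P$ à $\Bf(0,1)$ est $1$-lipschitzienne (lemme~\ref{lemm.schwarz} appliqué branche par branche via la proposition~\ref{pro:br2}), ce qui interdit l'expansivité sur $\JulK(P)=\Bf(0,1)$ — or ici on a supposé justement que $P$ n'a pas bonne réduction. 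Il reste à voir que $\Sigma_\infty$ ne contient pas de point critique. Soit $B_\infty=(B_n)\in\Sigma_\infty$. La théorie du polygone de Newton (cf. \S\ref{par:perio} et proposition~\ref{prop:boules}) donne, pour chaque $n$, $\diam(P(B_n)) = \abs{(P^n)'}_{B_n}\cdot\diam(B_n)$ où $\abs{\cdot}_{B_n}$ désigne le sup du module de la dérivée sur $B_n$ ; plus précisément, en itérant, $\diam(B_{n-k})\geq c_n(B_\infty)\,\diam(B_n)$ où $c_n(B_\infty)$ est un produit de modules de dérivées le long de l'orbite. L'hypothèse d'expansivité force ces produits à croître géométriquement sur des blocs de longueur $n$, donc $\diam(B_n)\to 0$. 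Mais si $\deg(B_\infty)>1$, il existerait $n_0$ tel que $\deg(B_n)\geq 2$ pour $n\geq n_0$, et alors (proposition~\ref{prop:boules}, partie~2, combinée avec la structure du polygone de Newton comme dans~\cite{Bezivin:2004b}) les diamètres $\diam(B_n)$ seraient minorés par une constante strictement positive — contradiction.

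\emph{Seconde implication, (ii)$\Rightarrow$(i).} Supposons $P$ non simple et $\Sigma_\infty$ sans point critique. On veut $c>1$ et $n\geq 1$ avec $\abs{(P^n)'(z)}\geq c$ sur $\JulK(P)$. Pour $z\in\JulK(P)$ et $B=j_m(z)\in\Sigma_m$, on a $\abs{P'(z)}\diam(B)=\diam(P(B))/\diam(B)\cdot\diam(B)$ à un facteur contrôlé par $\deg(B)$ près ; l'absence de point critique dans $\Sigma_\infty$ signifie $\deg(B_\infty)=1$ pour tout $B_\infty$, donc par compacité (la limite projective $\Sigma_\infty$ est compacte, chaque $\Sigma_n$ étant fini) il existe $N$ tel que $\deg(B_N)=1$ pour tout $B_N\in\Sigma_N$ apparaissant dans un élément de $\Sigma_\infty$. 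On montre alors que $P^N$ restreint à chaque boule de $\Sigma_N$ est un isomorphisme sur son image $\Bf(0,1)$, strictement dilatant : le rapport $\diam(\Bf(0,1))/\diam(B_N)\geq \rho$ pour un $\rho>1$ uniforme, car il n'y a qu'un nombre fini de boules dans $\Sigma_N$ et aucune n'est égale à $\Bf(0,1)$ tout entière (sinon $P$ aurait une boule totalement invariante, donc serait simple). Le lemme de Schwarz ultramétrique (lemme~\ref{lemm.schwarz}, équivalence des propriétés de la partie~3) identifie alors, sur chaque telle boule, le module constant de $(P^N)'$ à ce rapport de diamètres, d'où $\abs{(P^N)'(z)}=\rho>1$ uniformément sur $\JulK(P)$. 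On pose $c=\rho$ et $n=N$. On conclut par les références déjà citées (\cite{Bezivin:2004b}, propositions~15 et~16) que dans ce cas les diamètres tendent vers $0$, que $\JulK(P)$ est compact, et que $j$ est un homéomorphisme conjuguant $P$ au décalage sur $\Sigma_\infty$.

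\emph{Point délicat.} L'obstacle principal est le contrôle \emph{uniforme} du facteur de dilatation à partir de l'hypothèse purement ponctuelle « pas de point critique dans $\Sigma_\infty$ » : il faut passer de « $\deg(B_\infty)=1$ pour chaque suite » à « $\deg(B_N)=1$ pour toutes les boules de niveau $N$ » via la compacité de $\Sigma_\infty$, puis en déduire une minoration uniforme $\rho>1$ du rapport des diamètres, ce qui utilise de façon essentielle que $P$ n'est pas simple (aucune boule de $\Sigma_N$ n'est $\Bf(0,1)$ et aucune branche de la coupure canonique n'est totalement invariante, cf. exercice~\ref{ex:polsimple}). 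Le reste — le passage du module de la dérivée aux rapports de diamètres — est une application directe du polygone de Newton et du lemme~\ref{lemm.schwarz}, et les propriétés d'homéomorphisme de $j$ sont empruntées à~\cite{Bezivin:2004b}.
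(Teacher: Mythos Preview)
Votre plan est voisin de celui de l'article, mais l'implication (ii)$\Rightarrow$(i) comporte une erreur réelle.

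Vous affirmez qu'une fois obtenu $N$ tel que $\deg(B_N)=1$ pour toute $B_N\in\Sigma_N$, l'application $P^N$ restreinte à~$B_N$ est un isomorphisme sur~$\Bf(0,1)$, ce qui permettrait d'identifier $\abs{(P^N)'}$ au rapport des diamètres via le lemme~\ref{lemm.schwarz}. Mais $\deg(B_N)$ désigne le degré de $P\colon B_N\to P(B_N)$, pas celui de~$P^N\colon B_N\to\Bf(0,1)$ ; ce dernier est le produit des $\deg(P^k(B_N))$ pour $0\leq k<N$, et $P^k(B_N)$ appartient à~$\Sigma_{N-k}$, niveau sur lequel vous n'avez aucun contrôle. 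Concrètement, si une boule $B_0\in\Sigma_1$ contient un point critique (donc $\deg(B_0)\geq 2$) mais que ce point critique sort de~$P^{-2}(\Bf(0,1))$, toutes les boules de~$\Sigma_2$ peuvent être de degré~$1$ alors que celle qui s'envoie par~$P$ sur~$B_0$ donne un~$P^2$ de degré~$\geq 2$.

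L'article évite cet écueil en ne regardant que~$P$ d'un seul cran : pour $B'\in\Sigma_{n+1}$ et $B=P(B')\in\Sigma_n$, la boule~$B'$ est contenue dans une boule de~$\Sigma_n$, laquelle a degré~$1$ par choix de~$n$ ; donc $P\colon B'\to B$ est bien de degré~$1$, le lemme~\ref{lemm.schwarz} s'applique, et le rapport $\diam(B)/\diam(B')$ est strictement~$>1$ parce que $P$ n'est pas simple. On itère ensuite ce facteur pour faire décroître exponentiellement les diamètres. Votre contrôle $\Bf(0,1)\notin\Sigma_N$ est correct dans l'esprit mais porte sur le mauvais objet.

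Pour (i)$\Rightarrow$(ii), votre justification \og si $\deg(B_\infty)>1$, les diamètres seraient minorés\fg{} est vague et ne correspond pas au mécanisme effectif. L'argument propre est celui de la remarque~\ref{rem:pt_crit} : l'hyperbolicité force $\diam(B_n)\to 0$ pour toute suite $(B_n)$ définissant un $B_\infty$ ; si l'on avait $\deg(B_\infty)>1$, chaque $B_n$ (pour $n$ grand) contiendrait un point critique de~$P$ ; ces points étant en nombre fini et les $B_n$ emboîtées, un point critique fixe~$z_0$ est dans~$\bigcap B_n$, donc $z_0\in\JulK(P)$, ce qui contredit $\abs{(P^n)'(z_0)}\geq c>1$.
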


Esquissons la preuve de l'\'equivalence entre les propri\'et\'es (i) et (ii).

\begin{proof}[Esquisse de d\'emonstration]
Lorsque $P$ est hyperbolique de type Cantor, $P$ ne peut avoir de point 
critique dans $\JulK(P)$. 
Le deuxi\`eme point de la remarque~\ref{rem:pt_crit} entra\^{\i}ne alors qu'il n'y a pas de point critique
dans~$\Sigma_\infty$. De plus, $P$ n'est pas simple, car sinon
$P$ aurait des points p\'eriodiques attractifs ou indiff\'erents, en contradiction
avec l'hypoth\`ese $\abs{(P^n)'(z)}\geq c > 1$ sur $\JulK(P)$. 

Supposons inversement que $P$ ne soit pas simple
et que $\Sigma_\infty$ ne contienne pas
de point critique. L'absence de point critique assure l'existence
d'un entier positif $n$ tel que $P$ soit de degr\'e $1$ en restriction
\`a chaque boule $B$ de $\Sigma_n$ ($\Sigma_n$ param\`etre les
boules constituant $P^{-n}(\Bf (0,1))$). 
Soit $B$ une boule de $\Sigma_{n}$. Ses pr\'eimages par $P$ sont
des boules $B'$ sur lesquelles $P:B'\to B$ est de degr\'e $1$ ; 
ainsi, $P:B'\to B$ dilate  les
distances d'un facteur $c=\diam (B)/\diam (B')$ (lemme \ref{lemm.schwarz}).
Puisque $P$ n'est pas simple, on montre que $c>1$, pour tout choix
du couple $(B,B')$ dans $\Sigma_{n}\times  \Sigma_{n+1}$ et $P(B')=B$.
It\'erant cette remarque, on observe que les diam\`etres des \'el\'ements
de $\Sigma_{n+k}$ tendent exponentiellement vite vers $0$. 
On en d\'eduit alors que $\JulK(P)$ est un sous-ensemble
compact de $\P^1(\C_P)$ sur lequel $j$ est une bijection, puis que
$P$ est hyperbolique (voir par exemple \cite{Bezivin:2004b}, prop.~17).
\end{proof}

\begin{ex}
(Voir~\cite{Rivera-Letelier:Asterisque}, exemple~6.1,
ainsi que~\cite{Benedetto:2002b}, exemple~7, p.~253 
et~\cite{Hsia:London}, exemple~4.1, p.~699.)
Soit~$P$ le polyn\^ome d\'efini par 
$
P(z)= (z-z^p)/p
$.
Le but de cet exercice est de d\'emontrer que $\JulK(P)=\Z_p$ et 
que le polyn\^ome~$P$ est  hyperbolique de type Cantor.
Notons~$W$ le bassin d'attraction de $\infty$.

\begin{enumerate}
\item 
V\'erifier que les points fixes de~$P$ distincts de l'origine sont
de valeur absolue \'egale \`a~$1$,
conform\'ement \`a la normalisation employ\'ee dans ce paragraphe. 

\item Montrer que $\abs{P'(z)}= p$ si $\abs{z}\leq 1$; en d\'eduire que 
l'\'egalit\'e $\JulK(P)=\Z_p$ entra\^{\i}ne que $P$ est hyperbolique de
type Cantor. 

\item D\'emontrer que  $P(\Z_p)$ est contenu dans $\Z_p$ ; en d\'eduire
que $\Z_p$ est inclus dans $\JulK(P)$. Il s'agit donc d\'esormais de d\'emontrer
l'inclusion $\JulK(P)\subset \Z_p$.

\item D\'emontrer que le bassin~$W$ contient $\{\abs{z} > 1\}$. 

\item  Soit $w\in \C_p$ satisfaisant 
$\abs{w-i}=1$ pour tout~$i\in\{0, 1, \dots, p-1\}$. D\'emontrer 
que $\abs{w-w^p}=1$ puis en d\'eduire que~$w\in W$. 

\item Soit~$i$ un \'el\'ement de $\{0, 1, \dots, p-1\}$. Soit 
$B_i$ la boule ouverte de centre~$i$ et de rayon~$1$. Montrer que 
$P(B_i)$ co\"{\i}ncide avec $\Bo (p)= \{\abs z<p\}$ et que l'application induite 
$P: B_i \to  \Bo(p)$ est bijective. Dans la suite, nous noterons 
$P_i ^{-1}:   \Bo(p)\to B_i$ son inverse.

\item D\'emontrer par r\'ecurrence sur~$k$ que 
\[
\JulK(P) \subset \bigcup_{i_0,\dots, i_k} P_{i_0}^{-1}\circ \dots \circ P_{i_k}^{-1}(\Bo (p)) .
\]
\item Montrer que 
\[
P_{i_0}^{-1}\circ \dots \circ P_{i_k}^{-1}(\Bo (0,p))=\left\{z \; ; \; \, \abs{z-(i_0+ \dots + i_k p^k)}< p^{-k}\right\}.
\]
\item D\'eduire des questions pr\'ec\'edentes que~$\JulK(P)=\Z_p$.
\end{enumerate}
\end{ex}

\Subsection{Points p\'eriodiques et disques errants}

\subsubsection{Orbites finies (voir ~\cite{Rivera-Letelier:compositio} \S5.2)}
Soit   $B_\infty$ un point de $\Sigma_\infty$ qui est p\'eriodique de p\'eriode~$T$ 
pour l'application~$P\colon\Sigma_\infty\ra\Sigma_\infty$. 
Posons 
\[
D= \prod_{i=0}^{T-1} \deg(P^i(B_\infty)).
\]
Soit $(B_n)$ la suite de boules embo\^{\i}t\'ees correspondant \`a~$B_\infty$;
par d\'efinition, $P(B_\infty)$ est la suite de boules
$(P(B_{n+1}))$ et on a vu que les applications~$P\colon B_n\ra P(B_n)$
sont surjectives. On a donc $P^T(B_n)=B_{n-T}$ pour tout~$n\geq T$.

Soit $B$ l'ensemble $j^{-1}(B_\infty)$. On a
expliqu\'e dans la remarque~\ref{rema.j-1}
qu'il s'agit d'un point, d'une boule 
ou de l'ensemble vide suivant que le point
correspondant de~$\BP(\C_p)$ 
est un point classique de~$\P^1(\C_p)$, 
une coupure, ou un point singulier.
En tant que point de l'espace de Berkovich,
ce point est p\'eriodique.

S'il s'agit d'un point classique, ce point
est r\'epulsif par construction, et l'on a donc $D=1$.
De m\^eme, les points p\'eriodiques de $R$ dans $\Hp$ 
qui sont singuliers ou irrationnels ont un degr\'e local 
\'egal \`a~$1$ (voir le paragraphe \ref{subsec.action-coupures}). 
Ainsi, lorsque $D>1$, $B$ est une boule ferm\'ee et celle-ci d\'etermine une
coupure rationnelle. 
Cette boule est stable par~$P^T$ et $D$ est le degr\'e de 
l'application~$P^T\colon B\ra P^T(B)$.  
Si $r$ est le diam\`etre de~$B$, celui de~$P^T(B)$ est de la forme~$\abs a r^D$,
o\`u $a\in\C_p^*$ ; on a donc bien $r\in p^\Q$.

\subsubsection{Orbites infinies}
Soit maintenant $B_\infty$ un point de~$\Sigma_\infty$
qui n'est pas pr\'ep\'eriodique. En 
supposant que son diam\`etre est strictement positif, 
deux cas sont possibles:
ou bien $j^{-1}(B_\infty)$ est vide, 
ou bien $j^{-1}(B_\infty)$ est un disque errant.
Si cette derni\`ere situation se produit, elle fournit alors
un {\og contre-exemple\fg} \`a l'analogue du th\'eor\`eme de Sullivan. 

\begin{lemm}\label{lemm.disqueserrants}
Soit $P$ un polyn\^ome \`a coefficients dans $\C_p$ qui n'est pas simple. 
Soit $B_\infty$ un \'el\'ement de $\Sigma_\infty$ qui n'est pas pr\'ep\'eriodique. 
Si $j^{-1}(B_\infty)$ n'est pas vide, la limite inf\'erieure de 
la suite $\diam (P^n(j^{-1}(B_\infty)))$ est \'egale \`a~$0$.
Si, de plus, les coefficients de~$P$ appartiennent \`a~$\bar{ \Q_p}$, 
cette suite tend vers~$0$.
\end{lemm}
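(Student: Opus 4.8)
Let $z_0 = j^{-1}(B_\infty)$ (or more precisely a point in this set; since $\diam(B_\infty)>0$ the fibre $j^{-1}(B_\infty)$ is a ball, call it $\Delta_0$, with $\diam(\Delta_0)=\diam(B_\infty)>0$). Write $\Delta_n = P^n(\Delta_0)$; by Proposition~\ref{prop:boules} and the commutative diagram relating $j$ and $P$, each $\Delta_n$ is a ball and $\Delta_n = j^{-1}(P^n(B_\infty))$, so $\diam(\Delta_n) = \diam(P^n(B_\infty))$. The first step is to argue by contradiction: suppose $\liminf \diam(\Delta_n) = \rho > 0$. The strategy is to show that this forces the orbit of $B_\infty$ to be finite, contradicting the hypothesis that $B_\infty$ is not preperiodic.

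The key mechanism is a compactness/finiteness argument. Since $\JulK(P)\subset\Bf(0,1)$, all the balls $\Delta_n$ are contained in $\Bf(0,1)$ and have diameter in $p^{\Q}\cap[\rho,1]$, which is a \emph{finite} set of radii. For each such admissible radius $r$, the balls of diameter $r$ contained in $\Bf(0,1)$ that can appear as some $\Delta_n$ are only finitely many — indeed they correspond to finitely many rational cuts, and one can see directly that the coupures they determine stay within bounded distance of $S_\can$ in $\HpR$. So, along the subsequence where $\diam(\Delta_{n_k}) \to \rho$, the balls $\Delta_{n_k}$ take only finitely many values; hence two of them coincide, say $\Delta_m = \Delta_{m+T}$ with $T\geq 1$. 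But then $P^T$ maps the ball $\Delta_m$ onto itself, so the corresponding point of $\BP(\C_p)$ is periodic, and by the discussion of periodic orbits just above (a periodic point of $\Sigma_\infty$ with positive diameter gives a $P^T$-stable closed ball, determining a \emph{rational} coupure), $B_\infty$ would be preperiodic — contradiction. This yields $\liminf \diam(P^n(j^{-1}(B_\infty))) = 0$, which is the first assertion.

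For the second assertion, assume the coefficients of $P$ lie in $\bar\Q_p$. One must upgrade $\liminf = 0$ to $\lim = 0$. The natural idea is a monotonicity-type estimate along the orbit. Since $P$ is not simple, the hyperbolicity estimates behind Proposition~\ref{prop:boules} (or the cocycle $\deg(B) = \diam(P(B))/\diam(B)$ type relation from Lemma~\ref{lemm.schwarz} when the local degree is $1$) show that on the boules of $\Sigma_n$ the diameters are \emph{contracted} under inverse branches by a definite factor; equivalently, each application $P\colon\Delta_n\to\Delta_{n+1}$ with local degree $1$ dilates diameters, $\diam(\Delta_{n+1}) = c_n\,\diam(\Delta_n)$ with $c_n\geq 1$, and $c_n$ can only fail to exceed some fixed $c>1$ at the finitely many "near-critical" scales. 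The point is that with coefficients in $\bar\Q_p$ the value group is discrete and the critical values of $P$ together with their forward orbits live in a locally compact space, so the radii $\diam(\Delta_n)$ cannot oscillate: once the orbit $\Delta_n$ gets small it must stay small because the relevant dilation factors are uniformly $>1$ away from a bounded region. Combining this "no return from small to large" with $\liminf=0$ gives $\lim = 0$.

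\textbf{Main obstacle.} The delicate point is the second assertion: ruling out oscillation of $\diam(\Delta_n)$ and genuinely using the hypothesis that the coefficients lie in $\bar\Q_p$ rather than in $\C_p$. Over $\C_p$ one can have infinitely many distinct small scales appearing, which is precisely how Benedetto's wandering-domain examples are engineered; over $\bar\Q_p$ the local compactness forces the forward orbit of $B_\infty$ in $\Sigma_\infty$ to have a well-behaved "diameter profile", and the cleanest way to see this is probably to observe that a subsequence of the $\Delta_n$ with diameters bounded below would, by the finiteness argument of the first part, force periodicity — so in fact \emph{every} tail of $(\diam(\Delta_n))$ tends to $0$, not merely a subsequence. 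Making this last implication airtight (that no infinite oscillation between a small scale and a bounded-below scale is possible when everything is defined over $\bar\Q_p$) is where the real work lies; I would expect to invoke the discreteness of $|\bar\Q_p^*|$ together with the fact that the critical orbit, lying in a finite extension of $\Q_p$, is contained in a compact set.
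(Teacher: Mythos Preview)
Your argument for the first assertion contains a fatal error: the set $p^{\Q}\cap[\rho,1]$ is \emph{not} finite --- it is countable and dense in~$[\rho,1]$. Likewise, for a fixed radius~$r$, there are uncountably many closed balls of radius~$r$ inside~$\Bf(0,1)$, since $\C_p$ is not locally compact. So the finiteness/pigeonhole step (``two of the $\Delta_{n_k}$ must coincide'') does not go through over~$\C_p$, and in fact cannot: Benedetto's construction in~\S\ref{par:Benedetto} produces exactly an orbit of balls $\Delta_n\subset\Bf(0,1)$ with $\liminf\diam(\Delta_n)>0$ and no repetition. Any valid proof of the first assertion must therefore bring in something beyond ``balls in $\Bf(0,1)$''.

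The paper's approach is essentially the reverse of yours. It proves the \emph{second} assertion first: when the coefficients lie in~$\bar\Q_p$, pick a finite extension~$K/\Q_p$ containing the coefficients and a point of~$j^{-1}(B_\infty)$; then the balls $B_n\cap K$ are disjoint balls in the locally compact field~$K$, with radii comparable to $\diam(B_n)$, and local compactness of~$\Bf(0,1)\cap K$ forces $\diam(B_n)\to 0$. This is where the genuine finiteness lives --- in~$K$, not in~$\C_p$. For the first assertion (general $P\in\C_p[z]$), one argues by contradiction that $d_n>c$ for all~$n$, then approximates~$P$ by a polynomial~$Q\in\bar\Q_p[z]$ so close that $P(D)=Q(D)$ for every ball~$D$ whose image has diameter $\geq c$ (cf.\ exercice~\ref{ex:boules-et-ptfixes}); hence the entire orbit $(B_n)$ is also the $Q$-orbit, and the algebraic case yields the contradiction. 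Your sketch of the second assertion gestures at discreteness and local compactness but never isolates this mechanism; in particular the ``no return from small to large'' monotonicity you propose is not true in general (the diameters can oscillate, as in Benedetto's example), and the correct argument does not need it.
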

\begin{proof}[Esquisse de d\'emonstration
(voir  \cite{Rivera-Letelier:Asterisque}, lemme 4.29 pour plus de d\'etails)]
Le cas d'un singleton \'etant \'evident, on suppose que $j^{-1}(B_\infty)$
est une boule ferm\'ee ou irrationnelle.
Posons $B_n=P^n(j^{-1}(B_\infty))$, $d_n=\diam(B_n)$, et
observons que les boules~$B_n$ sont deux \`a deux disjointes,
car $B_\infty$ n'est pas pr\'ep\'eriodique.

Commen\c{c}ons par traiter  le cas particulier
o\`u les coefficients de~$P$ appartiennent
\`a~$\bar{\Q_p}$. Soit
donc $K$ une extension finie de~$\Q_p$ 
qui contient les coefficients de~$P$ 
et au moins un point de $j^{-1}(B_\infty)$. 
Pour tout~$n$, $B_n\cap K$ est une boule de~$K$ dont le diam\`etre
est au moins celui de~$B_n$ divis\'e par~$p$.
Si la suite~$(d_n)$ ne tendait pas vers~$0$, on pourrait alors
extraire de la suite~$(B_n\cap K)$ une suite infinie
de boules disjointes 
et de rayons minor\'es dans~$K$,
en contradiction avec le caract\`ere localement compact du corps~$K$.
Nous avons ainsi d\'emontr\'e que $d_n$ tend vers~$0$.

Dans le cas g\'en\'eral, on raisonne encore par l'absurde 
en supposant qu'il existe
$c>0$ tel que $d_n>c$ pour tout entier~$n\geq 0$.
Soit~$Q$ un polyn\^ome \`a coefficients dans 
$\bar \Q_p$ satisfaisant 
\[
\sup_z \delta( P(z),Q(z)) \leq c/2,
\]  o\`u 
$\delta$ d\'esigne la distance sph\'erique. 
Soit $D$ une boule dont l'image par~$P$ a un diam\`etre sup\'erieur \`a~$c$ ;
on montre alors que les deux boules $Q(D)$ et $P(D)$ s'intersectent 
et ont le m\^eme diam\`etre, donc que $P(D)=Q(D)$ 
(voir l'exercice \ref{ex:boules-et-ptfixes}).
En particulier, $B_n=P^n(j^{-1}(B_\infty))=Q^n(j^{-1}(B_\infty))$
pour tout~$n$.
Le raisonnement utilis\'e dans le cas o\`u les
coefficients de~$P$ sont alg\'ebriques sur~$\Q_p$ fournit
alors la contradiction recherch\'ee.
\end{proof}

\subsection{Un exemple de Rivera-Letelier}\label{par:exempleRL}

L'exemple que nous traitons ici est tir\'e de~\cite{Rivera-Letelier:Asterisque};
il s'agit d'un polyn\^ome pour lequel l'image de l'application~$j$
est l'ensemble d\'enombrable des points pr\'ep\'eriodiques de~$\Sigma_\infty$.
Comme $\Sigma_\infty$ a la puissance du continu,  on voit
que l'application~$j$ est loin d'\^etre surjective.

 Soit~$P$ le polyn\^ome d\'efini par 
\[
P(z) = \frac{1}{p}\left( z^p - z^{p^2}\right).
\]
Comme $0$ est fixe par~$P$ et $P(1)=0$, les point~$0$ et~$1$
appartiennent \`a~$\JulK(P)$. Par ailleurs, 
\begin{align*}
\abs{P(z)} &= p \abs{z}^{p^2} & \,  & \text{si } \abs{z} > 1 \ ; \\
\abs{P(z)} &= p \abs{z}^p \, \, \, &  \, & \text{si } \abs{z} < 1.
\end{align*}
On en d\'eduit que tout point~$z$ tel que $\abs z>1$
appartient au bassin d'attraction~$W_\infty$ de l'infini,
et que tout point~$z$ tel que $\abs z<p^{-1/(p-1)}$
appartient au bassin d'attraction~$W_0$ de~$0$.
En particulier,
 $\JulK(P)$ est contenu dans la boule ferm\'ee de centre~$0$ et de rayon~$1$.
Le point~$z=0$ est un point critique de~$P$; les autres points
critiques~$z$ satisfont 
$1=pz^{p+1}$ ; ils ont donc une valeur absolue
$>1$ et appartiennent \`a~$W_\infty$.

\medskip

\begin{lemm}
Soit~$B$ une boule contenue dans~$\Bf(0,1)$
dont le diam\`etre v\'erifie 
\[
\diam(B)>p^{-1/(p-1)}.
\]
L'ensemble $P^{-1}(B)$ est une r\'eunion finie de~$p$ 
boules deux \`a deux disjointes~$(B_i)_{0\leq i\leq p-1}$
telles que 
\begin{enumerate}
\item le  diam\`etre de chaque~$B_i$ est \'egal \`a~$(\diam(B)/p)^{1/p}$;
\item pour tout $1\leq i \leq p$, la boule~$B_i$ est contenue dans~$\Bf(i,p^{-1/p})$. 
\end{enumerate}
\end{lemm}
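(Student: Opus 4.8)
Le principe est d'exploiter la factorisation
\[
P=\psi\circ h,\qquad h(z)=z^p,\qquad \psi(y)=\frac1p\,(y-y^p),
\]
valable puisque $\psi(h(z))=\frac1p(z^p-z^{p^2})=P(z)$ ; on a alors $P^{-1}(B)=h^{-1}\bigl(\psi^{-1}(B)\bigr)$, et l'on analyse tour à tour $\psi$ puis $h$.

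On étudie d'abord $\psi$. Ce polynôme n'a qu'un terme de degré~$1$ et un terme de degré~$p$, et $\abs{\psi'(y)}=\bigl|\frac1p-y^{p-1}\bigr|=p$ pour tout $y\in\mathscr O$ (car $\abs{y^{p-1}}\leq 1<p=\abs{1/p}$) ; de plus, le petit théorème de Fermat donne $p\mid j^p-j$, donc $\abs{\psi(j)}\leq p^{-1}$, pour tout $j\in\F_p$. En développant $\psi$ au point~$j$, le terme linéaire domine strictement tous les autres à toute échelle~$<1$ : on en déduit que la restriction de $\psi$ à la boule résiduelle $B(j)=\pi^{-1}(j)$ est injective, qu'elle y dilate les distances par le facteur~$p$, et qu'elle envoie $B(j)$ sur $\Bo(0,p)$. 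D'autre part, si $\abs y>1$, ou si $\abs y=1$ avec $\bar y^{p-1}\neq 1$, alors $\abs{\psi(y)}=p>1\geq\diam(B)$ et $\psi(y)\notin B$ ; ainsi $\psi^{-1}(B)\subseteq\bigcup_{j\in\F_p}B(j)$. Comme $B\subseteq\Bf(0,1)\subseteq\Bo(0,p)$, il vient
\[
\psi^{-1}(B)=\bigsqcup_{j\in\F_p}\widetilde B_j,
\]
où chaque $\widetilde B_j\subseteq B(j)$ est une \emph{unique} boule, de rayon $\diam(B)/p$ et de centre~$c_j$ vérifiant $\abs{c_j-j}=\abs{c_B-\psi(j)}/p\leq p^{-1}$ (on note $c_B$ un centre de~$B$).

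On étudie ensuite $h(z)=z^p$. Pour chaque $j$, l'ensemble $h^{-1}(\widetilde B_j)=\{z\,;\,z^p\in\widetilde B_j\}$ est contenu dans $B(j)$ (si $z^p\in\widetilde B_j\subseteq B(j)$ alors $\bar z^{p}=j$, d'où $\bar z=j$ puisque le Frobenius fixe $\F_p$). Or, pour une boule $\Bf(a,r)$ de centre un élément non nul~$a$ de $\mathscr O$, l'image réciproque $\{z\,;\,z^p\in\Bf(a,r)\}$ se décrit explicitement : par homothétie $z=a^{1/p}u$ on se ramène à $\{u\,;\,u^p\in\Bf(1,r/\abs a)\}$, et l'identité $(1+v)^p-1=pv+\binom{p}{2}v^2+\dots+v^p$ donne $\abs{(1+v)^p-1}=\max(\abs v/p,\abs v^{p})$, de sorte que cet ensemble est une \emph{unique} boule, de rayon $(r/\abs a)^{1/p}$ dans la variable~$u$ et donc $r^{1/p}$ dans la variable~$z$, lorsque $r/\abs a\geq p^{-p/(p-1)}$, et une réunion de $p$ boules disjointes lorsque $r/\abs a<p^{-p/(p-1)}$. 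C'est ici qu'intervient l'hypothèse : elle équivaut à $r:=\diam(B)/p>p^{-p/(p-1)}\geq\abs{c_j}\,p^{-p/(p-1)}$ (puisque $\abs{c_j}\leq 1$), donc $h^{-1}(\widetilde B_j)$ est une seule boule $B_j$, de rayon $r^{1/p}=(\diam(B)/p)^{1/p}$. Enfin, si $z_j$ en est un centre, on a $z_j^{p}=c_j$ avec $\abs{c_j-j^p}\leq\max(\abs{c_j-j},\abs{j-j^p})\leq p^{-1}$ ; pour $j\neq 0$, comme $\abs{z_j}=1$ et $\bar{z_j}=\bar j=j$, la formule $\abs{z_j^{p}-j^p}=\abs{z_j-j}\cdot\max(p^{-1},\abs{z_j-j}^{p-1})$ impose $\abs{z_j-j}\leq p^{-1/p}$ ; pour $j=0$, on a directement $\abs{z_0}=\abs{c_0}^{1/p}\leq p^{-1/p}$. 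Dans tous les cas $B_j\subseteq\Bf(j,p^{-1/p})$.

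En rassemblant, $P^{-1}(B)=h^{-1}\bigl(\bigsqcup_j\widetilde B_j\bigr)=\bigsqcup_{j\in\F_p}B_j$ est la réunion de $p$ boules deux à deux disjointes (contenues dans des boules résiduelles distinctes), de diamètre commun $(\diam(B)/p)^{1/p}$ et telles que $B_j\subseteq\Bf(j,p^{-1/p})$ — pour $j=0$, cette boule est $\Bf(0,p^{-1/p})$, ce qui correspond à l'indexation $i=p$ de l'énoncé (car $\abs p=p^{-1}<p^{-1/p}$). Le point véritablement délicat de la preuve est la description de l'image réciproque par $z\mapsto z^p$, où le seuil $p^{-p/(p-1)}$, correspondant exactement à l'hypothèse $\diam(B)>p^{-1/(p-1)}$, décide si la préimage reste connexe ou se scinde en $p$ morceaux (c'est ce dernier phénomène qui, pour $\diam(B)$ plus petit, engendre la structure de Cantor).
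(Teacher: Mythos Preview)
Your proof is correct and takes a genuinely different route from the paper's. Two small slips do not affect the argument: from $p\mid j^p-j$ one only gets $\abs{\psi(j)}\leq 1$ (not $\leq p^{-1}$), but this is all you use later to obtain $\abs{c_j-j}\leq p^{-1}$; and for $\abs y>1$ one has $\abs{\psi(y)}=p\abs y^p>p$ rather than $=p$, which still gives $\psi(y)\notin B$.

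The paper proceeds quite differently. It expands $P(z+w)-P(w)$ directly and shows that $\abs{P(z+w)-P(w)}=p\abs z^p$ on the annulus $p^{-1/(p-1)}<\abs z<1$, then invokes Proposition~\ref{prop:boules} to write $P^{-1}(B)$ as a disjoint union of balls~$B_j$ of the stated radius. To count them, the paper uses the symmetry $P(\xi z)=P(z)$ for each $p$-th root of unity~$\xi$: since $\abs{\xi-1}=p^{-1/(p-1)}<p^{-1/p}$, each $B_j$ is $\mu_p$-stable, so $\deg(P|_{B_j})\geq p$, forcing exactly $p$ balls.

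Your factorisation $P=\psi\circ h$ with $h(z)=z^p$ is of course the \emph{reason} for this $\mu_p$-symmetry, so the two proofs are cousins. What your approach buys is a transparent explanation of the threshold: the condition $\diam(B)>p^{-1/(p-1)}$ is precisely what makes $h^{-1}(\widetilde B_j)$ a \emph{single} ball rather than $p$ disjoint balls (the $p$-th roots of unity being at mutual distance $p^{-1/(p-1)}$). The paper's approach, by contrast, avoids the explicit analysis of $z\mapsto z^p$ and gets the ball count by a slick degree argument; the threshold then appears only as the range of validity of the Taylor estimate. Both arguments make clear, in their own way, that degree~$p$ is concentrated in the $h$-factor while $\psi$ acts as a local isometry up to the dilation by~$p$.
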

\begin{proof}
Soit $w\in\C_p$ tel que $\abs w\leq 1$.
En d\'eveloppant
\[ P(z+w)-P(w)=P(z)+\sum_{k=1}^{p-1} \frac1p \binom pk w^{p-k}z^k - \sum_{k=1}^{p^2-1}
  \frac1p \binom{p^2}k w^{p^2-k}z^k, \]
on constate que l'on a
\[ \abs{P(z+w)-P(w)}=\abs{P(z)}=p\abs z^p \qquad\text{si $p^{-1/(p-1)}<\abs z<1$.} \]

Appliquons la proposition~\ref{prop:boules} \`a
l'ensemble $P^{-1}(\Bf(0,1))$: c'est la r\'eunion d'une famille
finie $(D_i)$ de boules disjointes telles que l'application
induite $P\colon D_i\ra\Bf(0,1)$ soit surjective. Ces boules
sont contenues dans $\Bf(0,1)$, et 
d'apr\`es le calcul qui vient d'\^etre pr\'esent\'e, 
ces boules sont de diam\`etre~$p^{-1/p}$.
Les solutions non nulles de l'\'equation~$z^{p^2}-z^p=0$ sont
les racines $(p^2-p)$-i\`emes de l'unit\'e et pour chacune d'entre
elles, il existe un unique entier~$i$ de~$\{1,\ldots,p-1\}$
tel que $\abs {z-i}=p^{-1/p}$. 
Il en r\'esulte que $P^{-1}(\Bf(0,1))$ est la r\'eunion
disjointe des boules $\Bf(i,p^{-1/p})$, pour $0\leq i<p$,
et $P\colon\Bf(i,p^{-1/p})\ra \Bf(0,1)$ est surjective.
Quitte \`a permuter les $D_i$, nous pouvons donc supposer
que $D_i= \Bf(i,p^{-1/p})$ pour tout $0\leq i \leq p-1$.

Soit maintenant $B$ une boule v\'erifiant les hypoth\`eses du lemme.
D'apr\`es la proposition~\ref{prop:boules}, l'ensemble $P^{-1}(B)$ est la
r\'eunion disjointe d'un nombre fini de boules $B_j$, $1\leq j \leq s$,
telles que $P(B_j)=B$. Puisque chaque application 
$P: D_i \to \Bf(0,1)$ est
surjective, le nombre $s$ de boules $B_j$ est au moins \'egal \`a~$p$.

Soit $c$ un centre de $B$ pour lequel $\abs c > p ^{-1/(p-1)}$.  
Soit $w_j$ un ant\'ec\'edent de $c$ situ\'e dans~$B_j$. Le calcul 
pr\'ec\'edent et la preuve de la proposition~\ref{prop:boules} montrent que $B_j$
co\"{\i}ncide avec la boule de centre $w_j$ et de rayon $(\diam (B)/p)^{1/p}$. 

Observons aussi que $P(\xi z) = P(z)$ 
pour toute racine~$p$-i\`eme de l'unit\'e~$\xi$. 
Comme $\abs{\xi-1}=p^{-1/(p-1)}<p^{-1/p}$,  chacune
des boules $B_j$ est stable par multiplication par~$\xi$
(autrement dit, $\xi B_j=B_j$ pour tout~$j$)
et la restriction \`a~$B_j$ du polyn\^ome~$P$ est
donc au moins de degr\'e~$p$. Par suite, $s\leq p$.

On a ainsi $s=p$ et l'on peut indexer les boules~$B_i$
de sorte que $B_i\subset D_i= \Bf(i,p^{-1/p})$. 
\end{proof}

Comme on l'a vu dans la d\'emonstration, $P\colon B_i\ra B$
est de degr\'e~$p$
et $\xi B_i=B_i$ pour toute racine $p$-i\`eme
de l'unit\'e~$\xi$. Notons en outre que 
\[ (\diam(B)/p)^{1/p}> (p^{-p/(p-1)})^{1/p} = p^{-1/(p-1)}. \]

Partant de la boule~$\Bf(0,1)$,
on peut donc appliquer ce lemme par r\'ecurrence aux boules de $P^{-n}(\Bf(0,1))$.
On en d\'eduit que pour tout entier~$n\geq 0$, l'ensemble $P^{-n}(\Bf(0,1))$
est la r\'eunion de boules deux \`a deux disjointes
$B_{a_0, a_1, \dots, a_{n-1}}$, 
index\'ees par l'ensemble des suites $(a_0,\dots,a_{n-1})$
d'entiers~$a_i$ tels que $a_i\in \{0, 1, \dots , p-1\}$. Ces
boules ont toutes le m\^eme diam\`etre, \`a savoir  
\[
p^{-(1/p+1/p^2+ \dots+ 1/p^n)}= p^{-(1-p^{-n})/(p-1)}.
\]
Lorsque $n$ tend vers~$+\infty$,
ce diam\`etre tend donc vers $p^{-1/(p-1)}$.

L'ensemble $\Sigma_\infty$ s'identifie alors \`a l'ensemble~$\{0,\dots,p-1\}^\N$
des suites infinies $a_\infty=(a_0, a_1 ,\dots)$ o\`u chaque $a_n$ appartient 
\`a~$\{0, 1, \dots, p-1\}$ ; \`a une telle suite $a_\infty$ correspond
une suite~$B_\infty=(B_n)$ de boules embo\^{\i}t\'ees telle que 
$P^{n-1}(B_n)=\Bf(a_n,p^{-1/p})$. 
L'intersection~$j^{-1}(B_\infty)$ de ces boules est soit vide, 
soit une boule de rayon $p^{-1/(p-1)}$.
Nous noterons $B_\infty \mapsto a(B_\infty)$
cette bijection entre $\Sigma_\infty$ et ~$\{0,\dots,p-1\}^\N$.

Consid\'erons d'abord une suite $B_\infty\in \Sigma_\infty$, dont le
codage associ\'e  $a(B_\infty)$ est
p\'eriodique de p\'eriode~$T$. On a vu dans ce cas que~$j^{-1}(B_\infty)$
est une boule. En outre, utilisant le fait que $P^T$
applique $B_{n+T}$ sur~$B_n$ mais que le diam\`etre de~$B_{n+T}$
est strictement inf\'erieur \`a celui de~$B_n$, on d\'emontre
(voir l'exercice \ref{ex:boules-et-ptfixes} ou \cite{Rivera-Letelier:Asterisque}, lemme 4.13)
que l'\'equation $P^T(z)=z$ a une solution dans~$B_{n+T}$.
Autrement dit, chacune des boules~$B_n$ contient un point
de p\'eriode~$T$ et il en est de m\^eme de leur intersection~$j^{-1}(B_\infty)$.
La boule $j^{-1}(B_\infty)$ contient donc un point p\'eriodique de p\'eriode~$T$.

Plus g\'en\'eralement, si $B_\infty$ est pr\'ep\'eriodique, $j^{-1}(B_\infty)$
est encore une boule ferm\'ee qui contient un point pr\'ep\'eriodique.

Soit enfin $B_\infty$ une suite qui n'est pas pr\'ep\'eriodique.
Comme le diam\`etre des disques~$B_n$ est minor\'e par~$p^{-1/(p-1)}$,
il r\'esulte du lemme~\ref{lemm.disqueserrants} que $j^{-1}(B_\infty)$
est vide.

Nous avons ainsi d\'emontr\'e que \emph{l'image de $\JulK(P)$ par~$j$
est \'egale \`a l'ensemble des points
pr\'ep\'eriodiques de~$\Sigma_\infty$.  }
Comme l'ensemble $\Sigma_\infty$ a la puissance du continu
et que l'ensemble des points pr\'ep\'eriodiques est d\'enombrable,
la majeure partie des \'el\'ements de~$\Sigma_\infty$ n'a pas d'ant\'ec\'edent
par~$j$ dans~$\JulK(P)$.

\begin{rem}
Comme promis, cela construit un ensemble non d\'enombrable de points
singuliers dans~$\Hp$.
\end{rem}

\subsection{L'exemple de Benedetto}\label{par:Benedetto}

Nous terminons ce texte en esquissant 
la construction, due \`a R.~Benedetto, d'un polyn\^ome~$P\in\C_p[z]$
poss\'edant des disques errants
(voir~\cite{Benedetto:2002,Benedetto:2006}). 

\begin{theo}[Benedetto]
Pour $\lambda\in\C_p$, soit $P_\lambda$ le polyn\^ome de~$\C_p[z]$ donn\'e par
\[
P_\lambda(z)= \lambda z^p + (1-\lambda) z^{p+1}.
\]
Il existe  un \'el\'ement  $\lambda$ de $\C_p$ v\'erifiant $\abs\lambda>1$
et tel que $P_\lambda$ a un disque errant 
qui ne soit pas attir\'e par un cycle attractif. 
\end{theo}

\begin{proof}[Esquisse de d\'emonstration] 
La d\'emonstration consiste \`a produire un param\`etre~$\lambda$
et un \'el\'ement~$z$ de~$\JulK(P_\lambda)$
dont la dynamique symbolique ne soit pas pr\'ep\'eriodique. Un tel
\'el\'ement appartiendra \`a un disque errant.

Supposons $\abs\lambda>1$ et posons $\rho=\abs\lambda^{-1/(p-1)}$.

Lorsque $\abs z > 1$, $\abs{P_\lambda(z)} > \abs \lambda \abs{z}^p$, si bien que
le compl\'ementaire de $\Bf(0,1)$ est contenu dans le bassin d'attraction de l'infini
pour $P_\lambda$.

\medskip

\noindent
\emph{Dynamique sur la boule de rayon $1$}. --- 
Analysons maintenant le comportement de~$P_\lambda$
sur $\Bf(0,1)$. L'origine~$0$ est un point fixe super-attractif. De plus,
si $\abs z<1$, on a $\abs{P_\lambda(z)}=\abs\lambda \abs z^p$.
On en d\'eduit que le bassin d'attraction de~$0$
contient la boule~$\Bo(0,\abs{\lambda}^{-1/(p-1)})=\Bo(0,\rho)$.

En revanche, lorsque $\rho<\abs z<1$ on a 
$\abs{P_\lambda(z)}>\abs z$,
de sorte que les orbites de~$P_\lambda$ sur cette couronne
s'\'eloignent de l'origine. 

Au voisinage de~$1$, la situation est plus simple.
En effet, $P_\lambda(1)=1$ et 
\[
P_\lambda'(1)=p\lambda-(p+1)(1-\lambda)
\]
est de module~$\abs\lambda$; autrement dit, $1$ est un point
fixe r\'epulsif. Plus g\'en\'eralement, $P_\lambda'$
est de module constant~$\abs\lambda$ sur le disque~$\Bo(1,1)$.

\begin{ex}

1)  Montrer que 
$\abs{P'(z)} < 1 $
pour tout point~$z$ satisfaisant $\abs z = \rho$.

2) Montrer que 
$\abs{P'(z)}=\abs\lambda $
pour tout point~$z$  satisfaisant $\abs z = 1$.
En particulier, la valeur absolue de $P'$ est \'egale \`a
$\abs \lambda$ sur la boule $\Bo(1,1)$. 
\end{ex}

\medskip

\noindent\emph{Dynamique symbolique. ---}
L'image r\'eciproque de la boule~$\Bf(0,1)$ par~$P_\lambda$ est la r\'eunion
disjointes de deux boules
\begin{itemize}
\item $B_0=\Bf(0,\lambda^{-1/p})$ sur laquelle~$P_\lambda$
est de degr\'e~$p$,
\item  $B_1=\Bf(1,1/\abs\lambda)$
sur laquelle $P_\lambda$ est de degr\'e~$1$ ; 
\end{itemize}
chacune de ces boules est appliqu\'ee surjectivement sur~$\Bf(0,1)$ par~$P_\lambda$.
 
Par r\'ecurrence, $P_\lambda^{-n}(\Bf(0,1))$ est r\'eunion
d'un ensemble fini de boules disjointes dont l'une, 
not\'ee~$B_0^{(n)}$ est centr\'ee en~$0$ et une autre,
not\'ee $B_1^{(n)}$ est centr\'ee en~$1$. En outre,
$\diam(B_1^{(n)})=\abs\lambda^{-n}$ et
\[
r_n:=\diam(B_0^{(n)})=\abs{\lambda}^{-(p^n-1)/p^n(p-1)}.
\]
La suite $r_n$ d\'ecro\^{\i}t et tend vers~$\abs\lambda^{-1/(p-1)}=\rho$ lorsque~$n$ 
tend vers~$+ \infty$.

\medskip

\noindent\emph{Strat\'egie. ---}
La strat\'egie de la construction consiste \`a 
construire un \'el\'ement~$B_\infty$ de~$\Sigma_\infty$
qui n'est pas pr\'ep\'eriodique et
pour lequel $j^{-1}(B_\infty)$ est une boule (de rayon
strictement positif). 

Pour cela il s'agit de construire un point $z$, et un param\`etre $\lambda$, tels
que l'orbite de $z$ 
s'approche de $\Bo(0,\rho)$ pendant un temps~$n_1$, puis 
en ressort n\'ecessairement pour arriver dans le disque autour de~$1$
au temps~$n_2$, y retourne au temps~$n_3$, ..., et ainsi de suite. 

Le point~$B_\infty$ de~$\Sigma_\infty$ correspond alors \`a la suite
de boules $(B_{a_n}^{(n)})$, o\`u $a_n=0$ jusque~$n_1$,
puis $a_n=1$ jusque~$n_2$, puis $a_n=0$ jusque~$n_3$, etc.
Pour que $j^{-1}(B_\infty)$  soit effectivement un disque errant, 
on doit emp\^echer la suite~$(P_\lambda^n(B_\infty))$ d'\^etre pr\'ep\'eriodique
et s'assurer que $\diam(B_\infty)$ est strictement positif. L'ensemble
$j^{-1}(B_\infty)$ ne sera pas vide car
il contient le point $z$ ; ce sera donc bien une boule de rayon strictement positif.

Nous passons sous silence la condition effectivement
impos\'ee sur la suite~$(n_k)$; disons juste
que la suite $(n_{2k}-n_{2k-1})$ tend vers l'infini, mais pas trop vite,
de sorte que le point~$z$ passe des temps de plus en plus grands pr\`es de~$1$
pour emp\^echer la pr\'ep\'eriodicit\'e, mais pas trop pour que le diam\`etre
de $B_\infty$ soit strictement positif.

\medskip

\noindent\emph{Construction.} ---
Revenons maintenant \`a la construction de Benedetto.
On commence par choisir~$z$ de sorte que
$P_\lambda^{n_0}(z)=1$ mais $P_\lambda^{n_0-1}(z)\neq 1$.
Ce point~$z$ sera d\'esormais fix\'e; c'est le param\`etre~$\lambda$
qu'on va modifier.

De m\^eme qu'on a facilement pu analyser  le comportement de~$P_\lambda$
dans la couronne centr\'ee en~$0$ et dans le disque~$\Bo(1,1)$,
il est relativement simple, quoique technique,
d'\'evaluer l'effet d'une perturbation de~$\lambda$. Posant $\lambda_0=\lambda$,
cela permet de modifier~$\lambda_0$ en~$\lambda_1$ de sorte que 
$\abs{\lambda_1-\lambda_0}$ soit petit,
et que l'orbite de~$z$ par~$P_{\lambda_1}$
soit convenable jusqu'\`a l'instant~$n_1$, avec $P_{\lambda_1}^{n_1}(z)=0$.
On perturbe de m\^eme~$\lambda_1$ en~$\lambda_2$
pour que l'orbite de~$z$ par~$P_{\lambda_2}$
soit convenable jusqu'\`a l'instant~$n_2$, avec $P_{\lambda_2}^{n_2}(z)=0$,
et ainsi de suite.
En choisissant judicieusement les $n_i$, Benedetto montre que
la suite~$(\lambda_n)$ est de Cauchy et que sa limite $\lambda_\infty$
est un param\`etre qui convient. 
Nous renvoyons \`a l'article~\cite{Benedetto:2006} pour les d\'etails.
\end{proof}

La preuve pr\'ec\'edente ne fournit pas d'exemple avec $\lambda\in \bar \Q_p$
car, comme l'indique Benedetto dans~\cite{Benedetto:2006},  
$\lambda_\infty$ ne peut appartenir \`a aucun sous-corps de~$\C_p$
dont la valuation  soit discr\`ete.
En outre, les points critiques de~$P_\lambda$ 
sont $0$, $\infty$ et $c=p\lambda/(p+1)(\lambda-1)$;
les deux premiers sont superattractifs donc appartiennent \`a l'ensemble
de Fatou et le troisi\`eme appartient souvent au bassin d'attraction 
de~$0$ (si $\abs\lambda<p^{p-1}$) ou de~$\infty$ (si $\abs\lambda>p^p$).
Dans ces deux cas, il r\'esulte des r\'esultats de~\cite{Benedetto:2000}
qu'il n'y a pas de domaine errant si $\lambda$ est alg\'ebrique.

\`A l'heure actuelle, on ne sait pas si une fraction rationnelle 
\`a coefficients alg\'ebriques
peut avoir un disque errant qui ne soit pas attir\'e 
par un point p\'eriodique attractif.  D'apr\`es le th\'eor\`eme~1.2
d\'emontr\'e par Benedetto dans~\cite{Benedetto:2000},
une telle fraction rationnelle poss\`ede un point critique r\'ecurrent
dont l'indice de ramification est divisible par~$p$. Plus pr\'ecis\'ement,
la d\'emonstration de cet article prouve le r\'esultat suivant.

\begin{theo}[Benedetto]% \footnote{V\'ERIFIER}
Soit~$P$ un \'el\'ement de $\bar \Q_p[z]$ de degr\'e sup\'erieur ou \'egal \`a~$2$.
Soit $B_\infty$ un \'el\'ement \emph{non pr\'ep\'eriodique} 
de $\Sigma_\infty$ pour lequel $j^{-1}(B_\infty)$ 
n'est pas vide. Il existe alors une sous-suite $(P^{n_k}(B_\infty))$ 
qui converge 
vers un point critique $z$ de $P$ satisfaisant :
\begin{enumerate}
\item $z$ est r\'ecurrent ($z$ est adh\'erent \`a son 
orbite positive $\{P^n(z), \, n> 0\}$) ;

\item l'indice de ramification de $P$ en $z$ est divisible par $p$.
\end{enumerate}
\end{theo}

\bibliographystyle{smfplain}
 \bibliography{yoccoz-smf}

\providecommand{\noopsort}[1]{}\providecommand{\url}[1]{\textit{#1}}
\providecommand{\bysame}{\leavevmode ---\ }
\providecommand{\og}{``}
\providecommand{\fg}{''}
\providecommand{\smfandname}{\&}
\providecommand{\smfedsname}{\'eds.}
\providecommand{\smfedname}{\'ed.}
\providecommand{\smfmastersthesisname}{M\'emoire}
\providecommand{\smfphdthesisname}{Th\`ese}
\begin{thebibliography}{10}

\bibitem{amice75}
{\scshape Y.~Amice} -- \emph{Les nombres $p$-adiques}, Collection SUP: Le
  Math\'ematicien, vol.~14, Presses Universitaires de France, Paris, 1975.

\bibitem{Baker:2008}
{\scshape M.~Baker} -- {\og An introduction to {B}erkovich analytic spaces and
  non-{A}rchimedean potential theory on curves\fg}, in \emph{{$p$}-adic
  geometry}, Univ. Lecture Ser., vol.~45, Amer. Math. Soc., Providence, RI,
  2008, p.~123--174.

\bibitem{baker-rumely2010}
{\scshape M.~Baker {\normalfont \smfandname} R.~Rumely} -- \emph{Potential
  theory and dynamics on the {B}erkovich projective line}, AMS Surveys and
  Mathematical Monographs, vol. 159, 2010.

\bibitem{Beardon:Book}
{\scshape A.~F. Beardon} -- \emph{Iteration of rational functions}, Graduate
  Texts in Mathematics, vol. 132, Springer-Verlag, New York, 1991, Complex
  analytic dynamical systems.

\bibitem{Benedetto-Briend-Perdrix:2007}
{\scshape R.~Benedetto, J.-Y. Briend {\normalfont \smfandname} H.~Perdry} --
  {\og Dynamique des polyn\^omes quadratiques sur les corps locaux\fg},
  \emph{J. Th\'eor. Nombres Bordeaux} \textbf{19} (2007), no.~2, p.~325--336.

\bibitem{Benedetto:these}
{\scshape R.~L. Benedetto} -- {\og Fatou components in {$p$}-adic dynamics\fg},
  \emph{PhD Thesis.} (1998), p.~1--98.

\bibitem{Benedetto:2000}
\bysame , {\og {$p$}-adic dynamics and {S}ullivan's no wandering domains
  theorem\fg}, \emph{Compositio Math.} \textbf{122} (2000), no.~3, p.~281--298.

\bibitem{Benedetto:2001}
\bysame , {\og Hyperbolic maps in {$p$}-adic dynamics\fg}, \emph{Ergodic Theory
  Dynam. Systems} \textbf{21} (2001), no.~1, p.~1--11.

\bibitem{Benedetto:2002b}
\bysame , {\og Components and periodic points in non-{A}rchimedean
  dynamics\fg}, \emph{Proc. London Math. Soc. (3)} \textbf{84} (2002), no.~1,
  p.~231--256.

\bibitem{Benedetto:2002}
\bysame , {\og Examples of wandering domains in {$p$}-adic polynomial
  dynamics\fg}, \emph{C. R. Math. Acad. Sci. Paris} \textbf{335} (2002), no.~7,
  p.~615--620.

\bibitem{Benedetto:2006}
\bysame , {\og Wandering domains in non-{A}rchimedean polynomial dynamics\fg},
  \emph{Bull. London Math. Soc.} \textbf{38} (2006), no.~6, p.~937--950.

\bibitem{berkovich1990}
{\scshape V.~G. Berkovich} -- \emph{Spectral theory and analytic geometry over
  non-{A}rchimedean fields}, Mathematical Surveys and Monographs, vol.~33,
  American Mathematical Society, Providence, RI, 1990.

\bibitem{Bezivin:2004b}
{\scshape J.-P. B{\'e}zivin} -- {\og Sur la compacit\'e des ensembles de
  {J}ulia des polyn\^omes {$p$}-adiques\fg}, \emph{Math. Z.} \textbf{246}
  (2004), no.~1-2, p.~273--289.

\bibitem{Cantat:ER}
{\scshape S.~Cantat} -- {\og Quelques aspects des syst{\`e}mes dynamiques
  polynomiaux: existence, exemples, rigidit{\'e}\fg}, \emph{Ce volume} (2006).

\bibitem{Douady-Hubbard:1985}
{\scshape A.~Douady {\normalfont \smfandname} J.~H. Hubbard} -- {\og On the
  dynamics of polynomial-like mappings\fg}, \emph{Ann. Sci. \'Ecole Norm. Sup.
  (4)} \textbf{18} (1985), no.~2, p.~287--343.

\bibitem{Ducros:Bourbaki}
{\scshape A.~Ducros} -- {\og Espaces analytiques {$p$}-adiques au sens de
  {B}erkovich\fg}, \emph{Ast\'erisque} (2007), no.~311, p.~Exp. No. 958, viii,
  137--176, S\'eminaire Bourbaki. Vol. 2005/2006.

\bibitem{dwork-g-s94}
{\scshape B.~Dwork, G.~Gerotto {\normalfont \smfandname} F.~J. Sullivan} --
  \emph{An introduction to ${G}$-functions}, Annals of Math. Studies, no. 133,
  Princeton Univ. Press, 1994.

\bibitem{Favre-Rivera-Letelier:2008}
{\scshape C.~Favre {\normalfont \smfandname} J.~Rivera-Letelier} -- {\og
  Th{\'e}orie ergodique des fractions rationnelles sur un corps
  ultram\'etrique\fg}, \emph{Manuscript} (2007), p.~1--39.

\bibitem{fresnel-vanderput2004}
{\scshape J.~Fresnel {\normalfont \smfandname} M.~van~der Put} -- \emph{Rigid
  analytic geometry and its applications}, Progress in Mathematics, vol. 218,
  Birkh\"auser Boston Inc., Boston, MA, 2004.

\bibitem{Griffiths-Harris:book}
{\scshape P.~Griffiths {\normalfont \smfandname} J.~Harris} -- \emph{Principles
  of algebraic geometry}, Wiley Classics Library, John Wiley \& Sons Inc., New
  York, 1994, Reprint of the 1978 original.

\bibitem{Herman-Yoccoz:1983}
{\scshape M.~Herman {\normalfont \smfandname} J.-C. Yoccoz} -- {\og
  Generalizations of some theorems of small divisors to non-{A}rchimedean
  fields\fg}, in \emph{Geometric dynamics (Rio de Janeiro, 1981)}, Lecture
  Notes in Math., vol. 1007, Springer, Berlin, 1983, p.~408--447.

\bibitem{Hsia:London}
{\scshape L.-C. Hsia} -- {\og Closure of periodic points over a
  non-{A}rchimedean field\fg}, \emph{J. London Math. Soc. (2)} \textbf{62}
  (2000), no.~3, p.~685--700.

\bibitem{lindahl2004}
{\scshape K.-O. Lindahl} -- {\og On {S}iegel's linearization theorem for fields
  of prime characteristic\fg}, \emph{Nonlinearity} \textbf{17} (2004), no.~3,
  p.~745--763.

\bibitem{Lubin:1994}
{\scshape J.~Lubin} -- {\og Non-{A}rchimedean dynamical systems\fg},
  \emph{Compositio Math.} \textbf{94} (1994), no.~3, p.~321--346.

\bibitem{McMullen:survey}
{\scshape C.~T. McMullen} -- {\og Frontiers in complex dynamics\fg},
  \emph{Bull. Amer. Math. Soc. (N.S.)} \textbf{31} (1994), no.~2, p.~155--172.

\bibitem{Milnor:book}
{\scshape J.~Milnor} -- \emph{Dynamics in one complex variable}, third
  \smfedname, Annals of Mathematics Studies, vol. 160, Princeton University
  Press, Princeton, NJ, 2006.

\bibitem{Morton-Silverman:Crelle}
{\scshape P.~Morton {\normalfont \smfandname} J.~H. Silverman} -- {\og Periodic
  points, multiplicities, and dynamical units\fg}, \emph{J. Reine Angew. Math.}
  \textbf{461} (1995), p.~81--122.

\bibitem{Rivera-Letelier:Asterisque}
{\scshape J.~Rivera-Letelier} -- {\og Dynamique des fonctions rationnelles sur
  des corps locaux\fg}, \emph{Ast\'erisque} (2003), no.~287, p.~xv, 147--230,
  Geometric methods in dynamics. II.

\bibitem{Rivera-Letelier:compositio}
\bysame , {\og Espace hyperbolique {$p$}-adique et dynamique des fonctions
  rationnelles\fg}, \emph{Compositio Math.} \textbf{138} (2003), no.~2,
  p.~199--231.

\bibitem{Rivera-Letelier:CMH}
\bysame , {\og Points p\'eriodiques des fonctions rationnelles dans l'espace
  hyperbolique {$p$}-adique\fg}, \emph{Comment. Math. Helv.} \textbf{80}
  (2005), no.~3, p.~593--629.

\bibitem{Silverman:book}
{\scshape J.~H. Silverman} -- \emph{The arithmetic of dynamical systems},
  Graduate Texts in Mathematics, vol. 241, Springer, New York, 2007.

\bibitem{Flexor}
{\scshape J.-C. Yoccoz} -- {\og Dynamique des polyn\^omes quadratiques\fg}, in
  \emph{Dynamique et g\'eom\'etrie complexes (Lyon, 1997)}, Panor. Synth\`eses,
  vol.~8, Soc. Math. France, Paris, 1999, Notes prepared by Marguerite Flexor,
  p.~x, xii, 187--222.

\end{thebibliography}

%
% \bibliography{aclab,acl,reference}
%

\end{document}